\DeclareMathAlphabet{\mathcalligra}{T1}{calligra}{m}{n}	
\DeclareFontShape{T1}{calligra}{m}{n}{<->s*[2]callig15}{}
\begin{document}

\newtheorem{theorem}{Theorem}[section]
\newtheorem{theorema}{Theorem}						
\renewcommand{\thetheorema}{\Alph{theorema}}
\providecommand*{\theoremaautorefname}{Theorem} 
\newtheorem*{theorem*}{Theorem}

\newtheorem*{maintheorem}{Main Theorem}

\newtheorem*{conj}{Conjecture}

\newaliascnt{lemma}{theorem}
\newtheorem{lemma}[lemma]{Lemma}
\aliascntresetthe{lemma}
\providecommand*{\lemmaautorefname}{Lemma} 

\newaliascnt{cor}{theorem}
\newtheorem{cor}[cor]{Corollary}
\aliascntresetthe{cor}
\providecommand*{\corautorefname}{Corollary} 

\newtheorem*{rem}{Remark}

\newaliascnt{prop}{theorem}
\newtheorem{prop}[prop]{Proposition}
\aliascntresetthe{prop}
\providecommand*{\propautorefname}{Proposition} 

\theoremstyle{definition}


\newcommand{\enmath}[1]{\ensuremath{#1}\xspace} 
\newcommand{\defeq}{\stackrel{\mathrm{def}}{=}}
\newcommand{\RR}{\mathbb{R}}
\newcommand{\ZZ}{\mathbb{Z}}
\newcommand{\NN}{\mathbb{N}}
\newcommand{\CC}{\mathbb{C}}
\newcommand{\HH}{\mathbb{H}}
\newcommand{\Q}{\mathcal{Q}}
\newcommand{\RP}{\mathbb{RP}^2}
\newcommand{\SO}[1]{\enmath{\operatorname{SO}(#1)}}
\newcommand{\OO}[1]{\enmath{\operatorname{O}(#1)}}
\newcommand{\diagfour}[4]{\begin{pmatrix} #1 & 0 & 0 & 0 \\ 0 & #2 & 0 & 0 \\ 0 & 0 & #3 & 0 \\ 0 & 0 & 0 & #4 \end{pmatrix}}
\newcommand{\diag}[1]{\operatorname{diag}\left(#1\right)}
\newcommand{\fix}[1]{\operatorname{Fix}\left(#1\right)}
\newcommand{\tr}[1]{\operatorname{tr}\left(#1\right)}
\newcommand{\FF}{\mathcal F}
\newcommand{\GG}{\mathcal G}
\newcommand{\Hh}{\mathcal H}
\newcommand{\CG}{C_G^\infty\left(\RR^8 \times \RR, \RR^8 \right)}
\newcommand{\VV}{\mathcal{V}_0}
\newcommand{\pvf}[1]{\mathcal{P}_{#1}}
\newcommand{\ROT}[1]{\textbf{\textcolor{red}{(#1)}}}			
\newcommand{\ii}{\mathbf{i}}
\newcommand{\jj}{\mathbf{j}}
\newcommand{\kk}{\mathbf{k}}
\newcommand{\spin}{\enmath{\operatorname{Spin}_4}}
\newcommand{\Gab}{\enmath{G_{a,b}}}
\newcommand{\Ga}{\enmath{\mathbf{G}_a}}
\newcommand{\Hab}{\enmath{H_{a,b}}}
\newcommand{\Ha}{\enmath{\mathbf{H}_a}}
\newcommand{\A}{\mathbb{A}}
\newcommand{\rkr}[1]{\ZZ / #1 \ZZ}
\newcommand{\m}{\mathcalligra{m}}
\newcommand{\bl}{\bullet}
\newcommand{\rr}{\mathbf{r}}

\renewcommand\autocite[1]{\textcite{#1}}

\numberwithin{equation}{section} 
\renewcommand{\theequation}{\arabic{section}.\arabic{equation}} 


\title{Equivariant bifurcations in $4$-dimensional \\ fixed point spaces}
\author{Reiner Lauterbach and Sören Schwenker}
\date{}
\maketitle
\thanks{In honor of Marty Golubitsky on the occasion of his seventieth birthday.}


\begin{abstract}
	In this paper we continue the study of group representations which are counterexamples to the Ize conjecture. As in the previous papers by \autocite{lauterbach2014equivariant} and \autocite{lauterbach2010do} we find new infinite series of finite groups leading to such counterexamples. These new series are quite different from the previous ones, for example the group orders do not form an arithmetic progression. However, as before we find Lie groups which contain all these groups. This additional structure was observed, but not used in the previous studies of this problem. 
	Here we also investigate the related bifurcations. To a large extent, these are closely related to the presence of mentioned  compact Lie group containing the finite groups. This might give a tool to study the bifurcations related to all low dimensional counterexamples of the Ize conjecture. It also gives an indication of where we can expect to find examples where the bifurcation behavior is different from what we have seen in the known examples.  
\end{abstract}

\section*{Introduction}
\autocite{lauterbach2010do} have looked at the Ize conjecture:
\begin{conj}[J. Ize]
	Let $V$ be a real, linear and absolutely irreducible representation of a finite group or a compact Lie group $G$. Then there exists an isotropy subgroup $H \le G$ with odd-dimensional fixed point space.
\end{conj}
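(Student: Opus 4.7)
The plan is first to reduce to finite groups and then to attack the finite case by induction on the dimension of $V$ via the isotropy lattice. Let $G$ be a compact Lie group with identity component $G_0$, acting absolutely irreducibly on $V$. If $G_0$ acts nontrivially, a maximal torus $T \subset G_0$ decomposes $V_\CC$ into weight spaces; generic vectors have isotropy containing a subtorus $T'$, and I would try to control the parity of $\dim \operatorname{Fix}_V(T')$ from the weight lattice together with the real structure on $V$. If $G_0$ acts trivially, the representation factors through $G/G_0$, which is finite and absolutely irreducible, and we are in the finite-group case, which we take to be the heart of the problem.

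For finite $G$, the basic dimension formula
\begin{equation*}
\dim \operatorname{Fix}_V(H) \;=\; \frac{1}{|H|}\sum_{h \in H}\chi(h),
\end{equation*}
with $\chi$ the character of $V$, suggests induction on $\dim V$ along the isotropy lattice. Pick a maximal isotropy subgroup $H < G$; the fixed-point space $W = \operatorname{Fix}_V(H)$ carries a natural representation of $N = N_G(H)/H$, and by maximality no nonzero $w \in W$ has $G$-stabilizer strictly larger than $H$, so $N$ acts on $W$ with trivial principal isotropy. One then argues, using absolute irreducibility of $V$ over $G$, that $W$ contains no proper $N$-invariant subspace and admits no $N$-invariant complex structure, hence is itself absolutely irreducible over $N$. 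This would hand us a smaller instance of the same problem, and the induction terminates as soon as $\dim W$ is odd, in which case $H$ (or the preimage in $G$ of a principal isotropy for $(N,W)$) is the desired isotropy subgroup.

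The hard part is forcing the induction to terminate: a priori $\dim W$ might remain even at every stage. To close the argument one needs an extra parity invariant that decreases strictly, or that detects directly the existence of an odd-dimensional fixed-point stratum — a $G$-equivariant Lefschetz computation on $S(V)$ or on $\mathbb{P}(V)$, or a count based on the Frobenius--Schur indicator and the real structure carried by an absolutely irreducible real representation, are natural candidates. The main obstacle, and the reason the statement is only a conjecture, is that absolute irreducibility over $\RR$ does not obviously constrain the parities of $\dim \operatorname{Fix}_V(H)$ globally across the lattice; any strategy along these lines will have to come to grips with the explicit finite groups produced in \textcite{lauterbach2010do} and \textcite{lauterbach2014equivariant}, which is where the present paper sharpens the picture.
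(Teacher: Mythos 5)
The statement you are trying to prove is not a theorem of this paper --- it is the Ize conjecture, and the entire point of the paper (and of \autocite{lauterbach2010do} and \autocite{lauterbach2014equivariant} before it) is that this conjecture is \emph{false}. The paper constructs explicit counterexamples: the groups $\Hab$ of order $8\m$ act absolutely irreducibly on $\RR^4$ with exactly two nontrivial isotropy types, both having $2$-dimensional fixed point spaces (so every isotropy subgroup has fixed point space of dimension $0$, $2$, or $4$), and the groups $\Gab$ of order $16\m$ act absolutely irreducibly on $\RR^8$ with a single nontrivial isotropy type whose fixed point space is $4$-dimensional. No proof strategy can succeed, so the task here is to identify where your argument breaks, and it breaks exactly where you suspect: the induction cannot be forced to terminate at an odd dimension, because for these groups the parity stays even all the way down the isotropy lattice.

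Two concrete steps in your sketch are also unjustified on their own terms. First, the claim that $W=\operatorname{Fix}_V(H)$ for a maximal isotropy subgroup $H$ is absolutely irreducible as a representation of $N=N_G(H)/H$ does not follow from absolute irreducibility of $V$ over $G$; in general $W$ can be reducible over $N$, and even when it is irreducible there is no reason it is absolutely so. Indeed, in the $\RR^8$ examples of this paper the relevant fixed point space is $4$-dimensional and the residual symmetry acting on it is far from acting absolutely irreducibly. Second, the hoped-for parity invariant (Lefschetz count on $S(V)$, Frobenius--Schur indicator, etc.) cannot exist, since any such invariant would have to distinguish the counterexample groups from groups satisfying the conjecture, and they share all the hypotheses. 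The productive direction, which the paper takes, is the opposite one: accept that the conjecture fails and ask instead whether the counterexample representations still admit generically symmetry-breaking isotropy types, which is the content of the Main Theorem here.
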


\noindent
They proved that this conjecture is not true by presenting three infinite families of finite groups acting on $\RR^4$, such that any of these groups has only nontrivial isotropy subgroups whose corresponding fixed point spaces are two-dimensional. They also show that for equivariant bifurcations with any group in the first two families at least one of the nontrivial isotropy types is generically symmetry breaking (in the sense of \autocite{field1990symmetry}). In their construction each of these families relates to a compact Lie group, which contains all the groups in the family, however these Lie groups do not a play a substantial role in the analysis. \\
Concerning dimensions of representation spaces which are small multiples of 4 they provide tables presenting computational results on counterexamples to the Ize conjecture including the three mentioned families. It turns out that there are, besides the three families, many more potential counterexamples to the Ize conjecture (however there are no proofs yet). The bifurcation question for all of these groups is completely open. In \autocite{lauterbach2014equivariant} the  third family is analysed including the question concerning the generic bifurcations. Based on this information a new family of infinitely many finite groups acting on $\RR^8$ is constructed. For both cases in dimension 4 and in dimension 8 it is shown that generically the (only) nontrivial isotropy type is symmetry breaking in the sense of Field and Richardson.   Again there is a compact Lie group which plays no visible role in this context. In part 4 of Theorem B in \cite{lauterbach2014equivariant}, \citeauthor{lauterbach2014equivariant} stated that this Lie group is a counter example to the Ize conjecture. However no proof for this statement is provided and in fact it is not correct as one can easily see. \\
In this paper we investigate infinite families of finite groups whose orders do not form an arithmetic progression as in the previous examples. Moreover we construct a new family acting on $\RR^4$ and based on this family a second family acting on $\RR^8$ which has a single nontrivial isotropy type and the dimension of its fixed point space is four dimensional. We prove that this isotropy type is generically symmetry breaking. The proofs are substantially different from the previous ones, here we make essential use of the Lie groups containing the groups in the family. The general question whether counterexamples to Ize's conjecture possess isotropy types which are generically symmetry breaking is open, but our technique might provide a tool to either construct counterexamples or to provide proofs.     

\section{Main results}
\label{lseb_mr}
\autocite{lauterbach2010do} have constructed three families of groups of orders $16\ell$ with $\ell\in 2\NN+1$, acting absolutely irreducibly on $\RR^4$ and leading to counterexamples to the Ize conjecture. In \cite{lauterbach2014equivariant}, \citeauthor{lauterbach2014equivariant} continues this work and constructs a family of groups of order $64\ell$ with $\ell \in 2\NN+1$ acting absolutely irreducibly on $\RR^8$ with only even-dimensional fixed point subspaces. In this paper we construct groups of order $8\m$ where $\m$ is odd and of the form 
\begin{equation}
\label{decm}
\m=a\cdot b \quad \text{with } a, b \in 2\NN +1 \text{ and } \gcd (a,b)  = 1
\end{equation}
(This sequence is listed in the On-Line Encyclopedia of Integer Sequences as sequence A061346 (\url{http://www.oeis.org})). These groups act absolutely irreducibly on $\RR^4$ and we use them to construct groups twice their size acting absolutely irreducibly on $\RR^8$. For this step $a$ needs to be of a special form guaranteeing the existence of square roots of $-1$ modulo $a$:
\begin{prop}
	\label{prime}
	Let $a_i = 1 \mod 4$ be prime and $s_i \in \NN$ for $i=1, \ldots , r$. Furthermore let
	\[a = \prod_{i=1}^r a_i^{s_i}. \]
	Then there exists $\rho \in \NN$ such that
	\[ \rho^2 = -1 \mod a.\]
\end{prop}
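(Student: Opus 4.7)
The plan is to combine a classical quadratic-residue fact for primes $p \equiv 1 \bmod 4$ with Hensel lifting and the Chinese Remainder Theorem.

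First, I would reduce to the case of a single prime power. Since the $a_i^{s_i}$ are pairwise coprime, the Chinese Remainder Theorem gives a ring isomorphism
\[
\ZZ / a \ZZ \;\cong\; \prod_{i=1}^r \ZZ / a_i^{s_i} \ZZ .
\]
Hence it suffices to produce, for each $i$, some $\rho_i$ with $\rho_i^2 \equiv -1 \bmod a_i^{s_i}$; the desired $\rho$ is then obtained by a simultaneous congruence.

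Next, I would handle each prime $a_i \equiv 1 \bmod 4$ in two sub-steps. For the base case mod $a_i$, the multiplicative group $(\ZZ/a_i\ZZ)^\times$ is cyclic of order $a_i - 1$, which is divisible by $4$; a generator $g$ therefore has a power $g^{(a_i-1)/4}$ of order $4$, and its square is the unique element of order $2$, namely $-1$. So some $\rho^{(0)}$ satisfies $(\rho^{(0)})^2 \equiv -1 \bmod a_i$. To lift to $a_i^{s_i}$, I would apply Hensel's lemma to $f(x) = x^2 + 1$: since $f'(\rho^{(0)}) = 2\rho^{(0)}$ is a unit mod the odd prime $a_i$, the solution lifts uniquely from $\ZZ/a_i^k\ZZ$ to $\ZZ/a_i^{k+1}\ZZ$ for every $k \geq 1$, yielding $\rho_i$ with $\rho_i^2 \equiv -1 \bmod a_i^{s_i}$. (Equivalently, one can note that $(\ZZ/a_i^{s_i}\ZZ)^\times$ is cyclic of order $a_i^{s_i-1}(a_i-1)$, still divisible by $4$, and repeat the generator argument.)

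I do not anticipate a genuine obstacle here, as both ingredients are standard; the only thing that needs care is observing that the hypothesis $a_i \equiv 1 \bmod 4$ is used precisely to ensure $4 \mid \phi(a_i^{s_i})$, and that the coprimality assumption on the factors of $\m$ in \eqref{decm} (together with primality of each $a_i$) guarantees the CRT decomposition is into pairwise coprime prime powers. Assembling the $\rho_i$ via CRT into a single $\rho \in \NN$ with $\rho \equiv \rho_i \bmod a_i^{s_i}$ for all $i$ then gives $\rho^2 \equiv -1 \bmod a_i^{s_i}$ for every $i$, hence $\rho^2 \equiv -1 \bmod a$.
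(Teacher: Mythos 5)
Your proof is correct and follows essentially the same route as the paper: a base case for primes $\equiv 1 \bmod 4$, a Hensel lift to prime powers via $f(X)=X^2+1$, and the Chinese Remainder Theorem to assemble the prime-power solutions. The only difference is cosmetic: the paper obtains the base case from Wilson's theorem, whereas you use cyclicity of $(\ZZ/a_i\ZZ)^\times$ together with $4 \mid a_i - 1$; both are standard.
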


\noindent
For further use we denote the set of such $a$ by $\A$:
\begin{align*}
	\label{eqA}
	\A	&= \left\lbrace \prod_{i=1}^r a_i^{s_i} \mid r \in \NN; a_i \text{ prime}, a_i = 1 \mod 4, s_i \in \NN \text{ for } i=1, \ldots , r \right\rbrace \\
	&= \lbrace 5, 13, 17, 25, 29, 37, 41, 53, 61, 65, \ldots \rbrace \notag
\end{align*}
(The sequence of these $\m$ is a subsequence of the one listed as sequence A257591 in the On-Line Encyclopedia of Integer Sequences (\url{http://www.oeis.org})). The groups acting on $\RR^8$ have precisely one nontrivial isotropy type which has a $4$-dimensional fixed point space and therefore lead to counterexamples to the Ize conjecture in dimension $8$.

The construction in \autocite{lauterbach2010do} relies heavily on the \emph{biquaternionic characterization} of elements in $\SO{4}$ presented in \autocite{conway2003on}. We briefly recall the necessary notations. Denote the space of quaternions with the standard basis $\lbrace1,\ii,\jj,\kk \rbrace$ where $\ii^2 = \jj^2 = \kk^2 = -1$ by $\HH$ and let $\Q \subset \HH$ be the set of unitary quaternions. The group of ordered pairs of such quaternions $\Q\times \Q$ is isomorphic to the $4$-dimensional spin group \spin. Identifying $\HH$ with $\RR^4$ in the obvious manner,
\[ x= \left(  x_1, x_2, x_3, x_4 \right)^T \leftrightarrow x_1 + \ii x_2 + \jj x_3 + \kk x_4 , \]
these pairs correspond to elements in \SO{4} via
\[ \widetilde{[l,r]}\colon x \mapsto \bar{l}xr.\]
\autocite{conway2003on} show that this is a two-to-one map on \SO{4} where the needed identification is $\widetilde{[1,1]}=\widetilde{[-1,-1]}$ yielding $\widetilde{[l,r]}=\widetilde{[-l,-r]}$, which obviously both map a point $x$ to the same image point.  The authors have used this characterization to classify the closed subgroups of \SO{4} in terms of the biquaternionic notation.

It is a subtle yet very important observation that this map is -- when taking the identification into account -- a bijection but it is not a group homomorphism. Following \autocite{chillingworth2015molien} we define a similar map via 
\begin{equation}
\label{smallrep}
[l,r]\colon x \mapsto lx \bar{r}.
\end{equation}
This provides an isomorphism $\Q \times \Q \to \SO{4}$ using the same identification $[1,1] = [-1,-1]$. Therefore we may view it as a group representation. Note that this isomorphism corresponds to the application of $\widetilde{[l,r]}^{-1}$ with the map given in \autocite{conway2003on} as $\bar{l} = l^{-1}$ for unitary quaternions. The application of $\widetilde{[l,r]}$ therefore yields an anti-representation. The tilde notation is obsolete from now on.

In a similar manner we can construct a map $\Q \times \Q \to \OO{4} \setminus \SO{4}$ using
\[ *[l,r] \colon x \mapsto l \bar{x} \bar{r}. \]
We do not need the explicit definition of this map but the fact that it is two-to-one as well turns out to be helpful.

We may now define the groups, we want to study in more detail. Let
\[ e_p = e^{\frac{\pi \ii}{p}} \]
be one of the primitive $p$-th root of $-1$ in $\CC$ and denote a group that is generated by the elements $g_1, g_2, \ldots$ by $\langle g_1, g_2, \ldots \rangle.$ Choose $a, b\in 2\NN+1$ such that they are relatively prime and define
\begin{equation}
	\label{H_ab}
	\Hab = \left\langle [e_a,1], [1,e_b], [1,\jj], [\jj,1] \right\rangle.
\end{equation}
We summarize results on the structure and the $4$-dimensional representation of these groups in the following theorems.

\begin{theorem}
	\label{thmsmallgroups}
	\begin{enumerate}[label={(\arabic*)},ref={\thecor~(\arabic*)}]
		\item For each odd $\m \in \NN$ and each decomposition $\m=a \cdot b$ as in \eqref{decm} \Hab forms a subgroup of \SO{4} of order $8\m$.
		\item Let $b<b'$ where $b'$ is odd and relatively prime to $a$. If $b$ divides $b'$, \Hab is a subgroup of $H_{a,b'}$, i.e.
		\[\Hab \le H_{a,b'}. \]
		\item The action of \Hab on $\HH$ as defined in \eqref{smallrep} is absolutely irreducible. It has precisely two nontrivial isotropy types. The corresponding fixed point spaces are $2$-dimensional.
	\end{enumerate}
\end{theorem}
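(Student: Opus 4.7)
Throughout, the plan is to work with lifts of elements of $\Hab$ to $\Q \times \Q$ under the $2$-to-$1$ cover $\Q \times \Q \to \SO{4}$, and to exploit the decomposition $\HH = \CC \oplus \CC\jj$, where $\CC = \mathrm{span}_{\RR}\{1, \ii\}$. For (1), the four generators of $\Hab$ split into two pairs lying in the two $\Q$-factors: $(e_a, 1)$ and $(\jj, 1)$ generate the dicyclic group $\langle e_a, \jj\rangle \subset \Q$ of order $4a$, while $(1, e_b)$ and $(1, \jj)$ generate the dicyclic group $\langle e_b, \jj\rangle \subset \Q$ of order $4b$. The preimage of $\Hab$ in $\Q \times \Q$ is therefore the full product $\langle e_a, \jj\rangle \times \langle e_b, \jj\rangle$, of order $16ab$, and the identity $(e_a, 1)^a (1, e_b)^b = (-1, -1)$ ensures that this preimage contains the kernel of the cover, so $|\Hab| = 8\m$. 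Part (2) is immediate from $e_b = e_{b'}^{b'/b} \in \langle e_{b'}\rangle$, which gives $[1, e_b] \in H_{a,b'}$; the remaining three generators of $\Hab$ already appear among those of $H_{a,b'}$.

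For (3), absolute irreducibility will be proved by a two-step commutant computation. Any $A \in \mathrm{End}_{\RR}(\HH)$ commuting with $[e_a, 1]$ and $[1, e_b]$ must intertwine both left and right multiplication by $\CC$, so $A$ is an endomorphism of $\HH$ viewed as a $\CC \otimes_{\RR} \CC$-bimodule. Under the Chinese-remainder isomorphism $\CC \otimes_{\RR} \CC \cong \CC \oplus \CC$, the bimodule $\HH = \CC \oplus \CC\jj$ splits into the two non-isomorphic $1$-dimensional summands, so this commutant is already $\CC \oplus \CC$, acting by a pair of scalars on the two summands. Imposing in addition commutation with $[\jj, 1]$ and $[1, \jj]$, each of which swaps the two summands via complex conjugation, then forces the two scalars to be equal and real, collapsing the commutant to $\RR \cdot \mathrm{id}$.

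For the isotropy classification I would pass to $(z, w)$-coordinates on $\HH = \CC \oplus \CC\jj$ and sort lifts of elements of $\Hab$ by the $\jj$-parity of each coordinate. Rotation/rotation elements $[e_a^k, e_b^m]$ act as diagonal rotations $(z, w) \mapsto (e_a^k e_b^{-m} z,\, e_a^k e_b^m w)$; the mixed types $[e_a^k \jj, e_b^m]$ and $[e_a^k, e_b^m \jj]$ swap the $z$- and $w$-components; and the $\jj/\jj$-type $[e_a^k \jj, e_b^m \jj]$ acts as $(z, w) \mapsto (u \bar z,\, v \bar w)$ with $u = e_a^k e_b^{-m}$ and $v = e_a^k e_b^m$. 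A direct calculation, using $\gcd(a, b) = 1$ together with the oddness of $a$ and $b$, rules out nonzero fixed vectors for the first three types. Every $\jj/\jj$-element is then an involution with a $2$-dimensional fix plane, and the assignment of $(u, v)$ to an element of $\Hab$ of $\jj/\jj$-type is injective, so distinct $\jj/\jj$-elements have distinct fix planes. Because any two of these fix planes meet only at the origin, every nontrivial isotropy subgroup must be of order exactly $2$ and fix a $2$-plane.

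It remains to count the conjugacy classes of these order-$2$ subgroups. Conjugations by $[e_a, 1], [1, e_b], [\jj, 1], [1, \jj]$ send the parameter $(k, m)$ to $(k+2, m), (k, m+2), (-k, m), (k, -m)$ respectively (and iterates produce the full translation lattice), so $(k \bmod 2,\, m \bmod 2)$ is a conjugation invariant. The identification $[l, r] = [-l, -r]$ corresponds to $(k, m) \sim (k + a,\, m + b)$ and, since $a, b$ are odd, simultaneously flips both parities; the combined conjugation-and-$\sim$-invariant is therefore $k + m \bmod 2$, yielding exactly two classes. I expect the main technical obstacle to be this fixed-point analysis ruling out nonzero fixes for the rotation/rotation and mixed types, together with the injectivity of the $(u, v)$ parametrisation; both hinge delicately on the oddness of $a$ and $b$ and on $\gcd(a, b) = 1$, and the $[l, r] = [-l, -r]$ identification must be threaded carefully through the bookkeeping.
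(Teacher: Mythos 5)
Your proposal is correct and reaches all three conclusions. Parts (1) and (2) essentially coincide with the paper's argument: the paper counts $|\Hab|=8\m$ via the normal form $[e_a,1]^{k_1}[1,e_b]^{k_2}[1,\jj]^{l_1}[\jj,1]^{l_2}$ (equivalently via Conway--Smith's $\pm[D_{2a},D_{2b}]$), which is the same count as your preimage $\langle e_a,\jj\rangle\times\langle e_b,\jj\rangle$ of order $16ab$ modulo the kernel $\{\pm(1,1)\}$, and part (2) is verbatim the paper's proof. Where you genuinely diverge is absolute irreducibility: the paper determines which pairs $[l,r]\in\Q\times\Q$ commute with the generators and then uses the two two-to-one maps onto $\SO{4}$ and $\OO{4}\setminus\SO{4}$ together with Lemma 6.2 of the 2010 paper to pass from the orthogonal commutant to the full linear one, whereas your $\CC\otimes_\RR\CC$-bimodule computation gets the full $\RR$-linear commutant directly and is self-contained (it uses, as the paper also does, that $e_a$ and $e_b$ are non-real, so each generates $\CC$ over $\RR$). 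Your isotropy analysis in $(z,w)$-coordinates, with $\jj/\jj$-elements acting as $(z,w)\mapsto(u\bar z,v\bar w)$, is a coordinate-free repackaging of the paper's exponent case analysis ($l_1\in\{1,3\}$, $l_2=1$), and your parity invariant $(k+m)\bmod 2$ is exactly the content of the paper's Lemma \ref{smalliso4}, where the conjugating element $[e_a,1]^{k_1(a+1)/2}[1,e_b]^{k_2(b+1)/2}$ reduces everything to the parity of $k_1+k_2$.

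One step in your write-up needs strengthening. To conclude that two distinct fix planes $L_u\times L_v$ and $L_{u'}\times L_{v'}$ meet only at the origin you need $u\neq u'$ \emph{and} $v\neq v'$; joint injectivity of $(u,v)$ only excludes $(u,v)=(u',v')$ and would still permit $u=u'$ with $v\neq v'$, in which case the two planes share the line $L_u\times\{0\}$ and your bound on the order of isotropy subgroups fails. The stronger, componentwise statement is true: $u=u'$ forces $e_a^{k-k'}=e_b^{m-m'}$ to be simultaneously a $2a$-th and a $2b$-th root of unity, hence $\pm 1$ since $\gcd(2a,2b)=2$, and the two sign choices give back the element itself or its $[-l,-r]$-identified copy; the same works for $v$. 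Alternatively you can bypass the fix-plane geometry entirely, as the paper does in Lemma \ref{smalliso3}: the product of two distinct $\jj/\jj$-involutions is a nonidentity rotation/rotation element, which you have already shown fixes only the origin, so no nonzero point can be fixed by two of them. With either repair the argument closes completely.
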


\noindent
We use this construction to define the family $\Hh_a$ for each $a \in 2\NN+1$:
\begin{equation*}
\label{smallfamily}
\Hh_a = \left\lbrace \Hab \mid b \in 2\NN+1 \text{ and } \gcd(a,b)=1 \right\rbrace.
\end{equation*}
The last result of \autoref{thmsmallgroups} allows us to generate a one-dimensional Lie group for each suitable $a$ as follows:

\begin{theorem}
	\label{thmsmallgroups2}
	\begin{enumerate}[label={(\arabic*)},ref={\thecor~(\arabic*)}]
		\item Let $a \in 2\NN+1$. Then the set
		\begin{equation*}
		\Ha = \overline{\bigcup_{H \in \Hh_a} H} = \left\langle [e_a,1], [1,\jj], [\jj,1], [1,e^{\psi\ii}] \mid \psi \in S^1 \right\rangle
		\end{equation*}
		forms a compact Lie group of dimension $1$. Its action on $\HH$ is absolutely irreducible and it possesses isotropy subgroups with one-dimensional fixed point space.
		\item Let $a, a' \in 2\NN+1$ odd with $a < a'$. If $a$ divides $a'$, \Ha is a subgroup of $\mathbf{H}_{a'}$, i.e.
		\[ \Ha \le \mathbf{H}_{a'}. \]
	\end{enumerate}
\end{theorem}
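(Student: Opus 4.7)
The plan is to verify the four assertions of part~(1) in order---equality of the two descriptions of $\mathbf{H}_a$, compact Lie group structure of dimension~$1$, absolute irreducibility, and existence of an isotropy type with one-dimensional fixed space---and then to deduce part~(2) by a short comparison of generators. For the first claim, the key point is that $[1,e_b]=[1,e^{\pi\ii/b}]\in H_{a,b}$ for every admissible $b$, and the set $\{k\pi/b\bmod 2\pi\colon k\in\ZZ,\ b\in 2\NN+1,\ \gcd(a,b)=1\}$ is dense in $\RR/2\pi\ZZ$: already for each fixed admissible $b$ the subgroup $\{k\pi/b\}_{k\in\ZZ}$ has maximal gap $\pi/b$, which tends to $0$ along the infinitely many admissible $b$. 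Hence the closure contains the whole circle $T=\{[1,e^{\psi\ii}]\colon\psi\in S^1\}$, while the remaining listed generators $[e_a,1]$, $[1,\jj]$, $[\jj,1]$ already sit in each $H_{a,b}$. The reverse inclusion is immediate because the right-hand group is closed and contains every $H_{a,b}$, and the family $\{H_{a,b}\}$ is directed under inclusion by \autoref{thmsmallgroups}(2), so its union is a subgroup.

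Since $\mathbf{H}_a$ is then a closed subgroup of $\SO{4}$, Cartan's theorem makes it a compact Lie subgroup. For the dimension I observe that $T$ is normal: a direct quaternionic computation shows that $[e_a,1]$ and $[\jj,1]$ commute with $T$, while $[1,\jj]$ conjugates $[1,e^{\psi\ii}]$ to $[1,e^{-\psi\ii}]$. Every element of $\mathbf{H}_a$ is therefore of the form $t\cdot w$ with $t\in T$ and $w$ in the subgroup generated by the three discrete generators; modulo $T$ this group is a dihedral group of order $2a$ together with a factor of $\ZZ/2$ coming from $[1,\jj]$, hence finite. Thus $\mathbf{H}_a/T$ is finite, $T$ is the identity component, and $\dim\mathbf{H}_a=\dim T=1$. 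Absolute irreducibility comes for free: each $H_{a,b}\le\mathbf{H}_a$ already acts absolutely irreducibly by \autoref{thmsmallgroups}(3), and this property is inherited by any supergroup.

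For the one-dimensional fixed space I take $x_0=1\in\HH$. Both $[e_a,e_a]=[e_a,1]\cdot[1,e^{\pi\ii/a}]$ and $[\jj,\jj]=[\jj,1]\cdot[1,\jj]$ lie in $\mathbf{H}_a$ and, being of the form $[l,l]$, act by conjugation $x\mapsto l x\bar l$ on $\HH$; both therefore fix $x_0$. Their individual fixed spaces are the quaternionic centralisers of $e_a$ and $\jj$, namely $\RR\oplus\RR\ii$ and $\RR\oplus\RR\jj$, whose intersection is the real line $\RR x_0$. Hence the isotropy subgroup of $x_0$ has fixed space contained in this intersection and containing $x_0$, so exactly one-dimensional. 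Part~(2) then follows by direct generator comparison: writing $a'=a k$, the identity $e_{a'}^{k}=e^{\pi\ii/a}=e_a$ yields $[e_a,1]=[e_{a'},1]^{k}\in\mathbf{H}_{a'}$, while $[1,\jj]$, $[\jj,1]$ and the circle $T$ already appear as generators of $\mathbf{H}_{a'}$. The main obstacle is the density argument at the outset together with the verification that $T$---rather than some strictly larger connected subgroup---is the identity component; both steps are elementary but require careful quaternionic bookkeeping to keep all signs straight.
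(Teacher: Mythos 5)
Your proposal is correct and follows essentially the same route as the paper: the paper likewise identifies $\mathbf{H}_a$ with the group generated by the listed elements, obtains the one-dimensional fixed point space from the pair $[e_a,e_a]$ and $[\jj,\jj]$ fixing $\langle 1,\ii\rangle$ and $\langle 1,\jj\rangle$ respectively, and proves part (2) via $[e_a,1]=[e_{a'},1]^{a'/a}$. The only difference is that you spell out the density and identity-component arguments that the paper dismisses as ``straightforward from the corresponding properties of the finite groups.''
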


\noindent
In the same manner this gives rise to a Lie group of dimension $2$:
\begin{theorem}
	\label{thmsmallgroups3}
	The set
	\begin{equation*}
	\label{smalllie2}
	\mathbf{H} = \overline{\bigcup_{a \in 2\NN+1} \Ha} = \left\langle [1,\jj], [\jj,1], [e^{\ii \phi},1], [1,e^{\ii \psi}] \mid \phi, \psi \in S^1 \right\rangle
	\end{equation*}
	forms a compact Lie group of dimension $2$.
\end{theorem}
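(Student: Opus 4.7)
The plan is to establish the three claims in sequence: the equality of the closure and the generated group, the compact Lie structure, and the dimension.

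For the first equality, I would write $G := \left\langle [1,\jj], [\jj,1], [e^{\ii \phi},1], [1,e^{\ii \psi}] \mid \phi, \psi \in S^1 \right\rangle$. By \autoref{thmsmallgroups2} each $\Ha$ is generated by $[e_a,1] = [e^{\ii \pi/a}, 1]$, $[1,\jj]$, $[\jj,1]$ and $[1, e^{\ii\psi}]$, all of which lie in $G$; since $G$ is closed in $\SO{4}$, it contains $\mathbf{H} = \overline{\bigcup_a \Ha}$. For the reverse inclusion the three generators $[1,\jj]$, $[\jj,1]$, $[1, e^{\ii\psi}]$ already lie in every $\Ha$, so only $[e^{\ii\phi}, 1]$ for arbitrary $\phi \in S^1$ requires approximation. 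Since $\Ha$ contains all powers $[e_a^k, 1] = [e^{k\pi\ii/a}, 1]$, this reduces to the density of $\{k\pi/a \bmod 2\pi : a \in 2\NN+1,\, k \in \ZZ\}$ in $[0, 2\pi)$, which is immediate because for each odd $a$ these give $2a$ equispaced angles and $a$ may be chosen arbitrarily large.

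Compactness of $\mathbf{H}$ is automatic from it being a closed subgroup of the compact group $\SO{4}$, and Cartan's closed subgroup theorem provides the Lie structure.

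For the dimension, I would lift to the double cover $\spin \cong \Q \times \Q$. Denote $T := \{e^{\ii\phi} : \phi \in S^1\} \subset \Q$, a circle subgroup. In $\Q$, the subgroup generated by $T$ and $\jj$ is $T \cup T\jj$, a one-dimensional Lie group with two components (one notes $\jj T \jj^{-1} = T$ and $\jj^2 = -1 \in T$). Consequently the closed subgroup of $\Q \times \Q$ generated by $T \times \{1\}$, $\{1\} \times T$, $(\jj,1)$ and $(1,\jj)$ is the product $(T \cup T\jj) \times (T \cup T\jj)$, of dimension $2$. The preimage of $\mathbf{H}$ under the two-to-one covering $\Q \times \Q \to \SO{4}$ is precisely this product group (one uses $-1 = e^{\ii\pi} \in T$ to absorb the sign ambiguities from the identification $[l,r] \leftrightarrow \{(l,r),(-l,-r)\}$), so $\dim \mathbf{H} = \dim((T\cup T\jj) \times (T\cup T\jj)) = 2$.

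The main obstacle I expect is not any single deep step but careful bookkeeping in the last paragraph: verifying that the preimage in $\spin$ is exactly the product $(T \cup T\jj) \times (T \cup T\jj)$ (equivalently, has the same identity component), which involves tracking the $\pm$ signs of the covering and checking that mixed sign combinations such as $(e^{\ii\phi}, -1)$ remain inside the product via $-1 \in T$.
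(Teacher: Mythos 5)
Your argument is correct and is essentially the proof the paper intends: the paper dismisses this theorem as following ``in the same manner'' as the one-dimensional case, namely via the density of the angles $k\pi/a$ to identify the closure with the generated group, and you supply the details the paper omits (Cartan's closed subgroup theorem for the Lie structure, and the lift to $\Q\times\Q$ identifying the preimage as the two-component-per-factor group $\left(T\cup T\jj\right)\times\left(T\cup T\jj\right)$ for the dimension count). The only point worth stating explicitly is that $\bigcup_{a}\Ha$ is a directed union (any $\Ha$ and $\mathbf{H}_{a'}$ both lie in $\mathbf{H}_{\operatorname{lcm}(a,a')}$ by Theorem~\ref{thmsmallgroups2}(2)) and hence already a subgroup before taking closures, though your two-sided inclusion against the closed group $G$ sidesteps this.
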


To perform the final step in the construction of the groups acting on $\RR^8$ we need the matrix representatives of the generating elements of the groups \Hab and denote them as follows:
\[ \left[e_a,1\right] \leftrightarrow c, \quad \left[1,e_b\right] \leftrightarrow d, \quad [1,\jj] \leftrightarrow q, \quad [\jj,1] \leftrightarrow s. \]
We then look at $8$-dimensional representations of the groups constructed so far and extend them so that the representation becomes absolutely irreducible. Let $a \in \A$ and $b \in 2\NN+1$ such that $a$ and $b$ are relatively prime as before. Choose $\rho$ as in Proposition~\ref{prime}. Without loss of generality we may assume $\rho$ to be odd. If $\rho^2 = -1 \mod a$ then the same holds for $-\rho$. Since $a$ is odd, either $\rho$ or $-\rho$ is odd. We define a group as follows:
\begin{equation}
\label{smallmatrep} 
\tilde{H} = \left\langle [e_a,1]^\rho, [1,e_b], [1,\jj]^3, [\jj,1] \right\rangle. 
\end{equation}
This is obviously a subgroup of \Hab. Furthermore
\begin{equation*}
\left(\left([e_a,1]^\rho \right)^{-1}\right)^\rho = [e_a,1] \quad \text{and} \quad
[\jj,1]^2 [1,\jj]^3 = [1,\jj]
\end{equation*}
so the other  inclusion holds as well and therefore the groups are equal: $\tilde{H} = \Hab.$ Hence we can define a representation of \Hab on $\RR^4$ where $[e_a,1]$ and $[1,\jj]$ act as $[e_a,1]^\rho$ and $[1,\jj]^3$ respectively. 

We are interested in the direct sum of these two representations. It defines a group action of \Hab on $\RR^8$ which is obviously reducible. To guarantee absolute irreducibility we need to supplement the set of generators of \Hab with an element $v$ which exchanges the blocks of the two representations. We define its action on $\RR^8$ as follows: let $x, y \in \RR^4$ then
\[ v \begin{pmatrix}
x \\
y
\end{pmatrix} = \begin{pmatrix}
\mathbbm{1} _4 y \\
[ \jj , 1 ] x
\end{pmatrix}. \]
Since $v$ cannot be displayed properly in terms of pairs of unitary quaternions we focus on the matrix representation from now on. To meet the assumptions we have made on the $8$-dimensional representation we define the matrix generators as block matrices as follows (see \eqref{smallmatrep} and \eqref{MatrixRep}):
\begin{equation*}
\label{bigMatrixRep}
\begin{alignedat}{5}
C(a) = C 	&= \begin{pmatrix}
c & 0 \\
0 & c^\rho
\end{pmatrix}& &\quad
D(b)& & =D& 	&= \begin{pmatrix}
d & 0 \\
0 & d
\end{pmatrix} \\
Q			&= \begin{pmatrix}
q & 0 \\
0 & -q
\end{pmatrix}& &\quad
& &\mathrel{\phantom{=}} S &			&= \begin{pmatrix}
s & 0 \\
0 & s
\end{pmatrix} \\
V 			& = \begin{pmatrix}
0 & \mathbbm{1} _4 \\
s & 0
\end{pmatrix}.
\end{alignedat}
\end{equation*}
With these we define the groups acting on $\RR^8$ in terms of matrix generators
\[ \Gab = \left\langle C, D, Q, S, V \right\rangle \]
where the dependence on $a$ and $b$ lies in the matrices $C$ and $D$.

We obtain similar results on the structure and the $8$-dimensional representations as in the $4$-dimensional case.
\begin{theorem}
	\label{thmgroups}
	\begin{enumerate}[label={(\arabic*)},ref={\thecor~(\arabic*)}]
		\item For each $a \in \A$ and $b \in 2\NN+1$ with $\gcd(a,b)=1$ as above the group \Gab forms a subgroup of \OO{8} of order $16\m$ where $\m = a \cdot b$.
		\item Let $b<b'$ where $b'$ is odd and relatively prime to $a$. If $b$ divides $b'$, \Gab is a subgroup of $G_{a,b'}$, i.e.
		\[ \Gab \le G_{a,b'}. \]
		\item The natural $8$-dimensional representation of \Gab is absolutely irreducible. It has precisely one nontrivial isotropy type. The corresponding fixed point space is $4$-dimensional.
	\end{enumerate}
\end{theorem}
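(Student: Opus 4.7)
The plan is to treat each of the three parts in turn.

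For part (1), introduce the subgroup $K = \langle C, D, Q, S \rangle$ of block-diagonal matrices in \Gab. The assignment $h \mapsto \diag{h, \phi(h)}$ identifies $K$ with the graph of an automorphism $\phi$ of \Hab determined on generators by $\phi(c) = c^\rho$, $\phi(d) = d$, $\phi(q) = q^3$, $\phi(s) = s$; that $\phi$ is a well-defined automorphism is verified by checking it preserves each defining relation of \Hab, the essential input being $\gcd(\rho, 2a) = 1$, itself a consequence of $\rho$ being odd together with $\rho^2 \equiv -1 \pmod{a}$ yielding $\rho^2 \equiv -1 \pmod{2a}$. Hence $|K| = |\Hab| = 8\m$. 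A direct matrix computation, again using $\rho^2 \equiv -1 \pmod{2a}$, then produces $V C V^{-1} = C^\rho$, $V D V^{-1} = D$, $V Q V^{-1} = S^2 Q$, $V S V^{-1} = S$, so $V$ normalizes $K$; since $V^2 = S \in K$ while $V \notin K$ (being block-antidiagonal), $|\Gab| = 2 |K| = 16\m$, with orthogonality of $V$ immediate from the block structure. Part (2) is immediate: only $D$ depends on $b$, and $b \mid b'$ gives $D = (D')^{b'/b} \in G_{a,b'}$, while the other generators agree.

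For part (3), first establish absolute irreducibility via the real character criterion $\frac{1}{|G|} \sum_g \chi(g)^2 = 1$. Restricted to $K$ the character $\chi$ splits as $\chi_1 + \chi_2$ where $\chi_i$ is the trace on the $i$-th block; restricted to the coset $V K$ every element is block-antidiagonal, so $\chi \equiv 0$ there. Both $\chi_1$ and $\chi_2 = \chi_1 \circ \phi$ are characters of absolutely irreducible real representations of \Hab (by \autoref{thmsmallgroups}(3) and because post-composing with an automorphism preserves absolute irreducibility), hence $\langle \chi_i, \chi_i \rangle_{\Hab} = 1$. Non-isomorphism is detected at $g = [e_a, 1]$: $\chi_1(g) = 4 \cos(\pi/a) \ne 4 \cos(\rho \pi / a) = \chi_2(g)$, since $\rho^2 \equiv -1 \pmod{2a}$ precludes $\rho \equiv \pm 1 \pmod{2a}$. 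Thus $\frac{1}{|\Gab|} \sum_g \chi(g)^2 = \frac{|K|}{|\Gab|}\bigl(\langle \chi_1, \chi_1 \rangle + 2 \langle \chi_1, \chi_2 \rangle + \langle \chi_2, \chi_2 \rangle\bigr) = \frac{1}{2}(1 + 0 + 1) = 1$.

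For the isotropy classification, observe that $\diag{g, \phi(g)} \in K$ fixes $(x, y) \in \RR^8$ exactly when $g$ fixes $x$ and $\phi(g)$ fixes $y$. When $g$ is an involution in one of the two nontrivial isotropy subgroups of \Hab from \autoref{thmsmallgroups}(3), both $\fix{g}$ and $\fix{\phi(g)}$ are $2$-dimensional; concretely, $g = [\jj, \jj]$ yields the $4$-dimensional fixed subspace $\operatorname{span}(1, \jj) \oplus \operatorname{span}(\ii, \kk)$. The key structural point is that conjugation by $V$ on $K$ realizes exactly the automorphism $\phi$, and $\phi$ interchanges the two \Hab-conjugacy classes of nontrivial isotropy subgroups: parametrizing involutions as $[e_a^n \jj, e_b^m \jj]$ with $(n, m) \in \ZZ/2a \times \ZZ/2b$, the two classes are distinguished by the parity of $n + m$, while $\phi$ acts as $(n, m) \mapsto (\rho n, m + b)$, flipping this parity since both $\rho$ and $b$ are odd. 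The two \Hab-classes therefore fuse into a single \Gab-conjugacy class. To exclude further isotropy types, verify in biquaternionic notation that $\phi(g) s g$ is never central in \Hab, so no $V h \in V K$ has a full $4$-dimensional fixed subspace, and check that intersections of the relevant block fixed subspaces for two non-conjugate involutions are trivial, ruling out isotropy subgroups generated by more than one involution.

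The main obstacle will be this last isotropy step: showing that all nontrivial \Gab-isotropy subgroups are conjugate to the single subgroup described and that no additional isotropy types arise from the coset $V K$ or from larger intersections of fixed subspaces. The essential leverage is the identification of $V$-conjugation on $K$ with the automorphism $\phi$, which is precisely what mixes the two \Hab-isotropy classes into one and renders the final bookkeeping tractable.
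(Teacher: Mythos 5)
Your proposal is correct in outline and, for the absolute irreducibility in part (3), takes a genuinely different route from the paper. The paper computes the commutant directly: it writes a commuting linear map as a $2\times 2$ array of $4\times 4$ blocks, kills the off-diagonal blocks using relations such as $L_{1,2}qs=-qsL_{1,2}$ and $L_{1,2}c^{\rho}=cL_{1,2}$ together with $\cos(\rho\pi/a)\ne\cos(\pi/a)$, and then invokes absolute irreducibility of \Hab on the diagonal blocks. Your character-theoretic argument ($\chi\equiv 0$ on the coset $VK$, $\chi|_{K}=\chi_1+\chi_1\circ\phi$ with the two summands distinct irreducible characters, hence $\frac{1}{|G|}\sum_g\chi(g)^2=1$) reaches the same conclusion more conceptually, and it hinges on exactly the same arithmetic input, namely that $\rho\equiv\pm1\pmod{2a}$ is excluded by $\rho^2\equiv-1\pmod{2a}$. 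Parts (1) and (2), and the fusion of the two \Hab-isotropy classes, match the paper: your automorphism $\phi$ is precisely conjugation by $V$, which the paper records simply as $VQSV^{-1}=Q^3S$.

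There is, however, one genuine gap, in the step you yourself flag as the main obstacle. To show that the coset $VK$ contributes no isotropy you must show that every element $Vk$ fixes only the origin, not merely that it has no $4$-dimensional fixed subspace: an element of $VK$ with, say, a $2$-dimensional fixed space would already create a new isotropy type or enlarge an isotropy subgroup beyond $\langle QS\rangle$. Writing $k=\operatorname{diag}(g,\phi(g))$, the fixed vectors of $Vk$ are exactly the pairs $(\zeta, sg\zeta)$ with $\phi(g)sg\,\zeta=\zeta$, so the condition you need is that $\phi(g)sg$ has no nonzero fixed vector; ``not central'' is strictly weaker and does not suffice. The fact that saves the argument is that $\phi(g)sg$, put into the normal form $c^{k_1}d^{k_2}q^{l_1}s^{l_2}$, always comes out with $l_1$ even and $l_2=1$, and by the $4$-dimensional classification such elements fix only $0$; this is precisely the computation the paper carries out. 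A second, minor, slip: to rule out isotropy subgroups with more than one nontrivial generator you must check that the fixed spaces of any two \emph{distinct} involutions in $K$ intersect trivially, not only those of non-conjugate ones.
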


\noindent
We define families of these groups for every suitable $a$ as well
\begin{equation*}
\label{family}
\GG_a = \left\lbrace \Gab \mid b \ne 1 \text{ odd and } \gcd(a,b)=1 \right\rbrace.
\end{equation*}
In a similar manner as in the $4$-dimensional case we can generate compact Lie groups of dimension $1$ from the groups \Gab for every $a \in \A$. To do so we adapt notation of the generating matrices to characterize arbitrary rotations. Denote the $2$-dimensional rotation matrix by an angle $\psi$ by $\rr(\psi)$ and write
\[
d(\psi)	=
\begin{pmatrix}
\rr(- \psi)	& 0 \\
0			& \rr(\psi)
\end{pmatrix},\\
\]
where $\psi \in S^1$ (compare this to \eqref{MatrixRep}) and
\[
D(\psi)	=
\begin{pmatrix}
d(\psi)	& 0 \\
0		& d(\psi)
\end{pmatrix}.
\]

\begin{theorem}
	\label{thmgroups2}
	Let $a \in \A$. Then the set
	\begin{equation*}
	\label{liegroup}
	\Ga = \overline{\bigcup_{G\in \GG_a} H } = \left\langle C, Q, S, V, D(\psi) \mid \psi \in S^1 \right\rangle
	\end{equation*}
	forms a compact Lie group of dimension 1. Its natural $8$-dimensional representation is absolutely irreducible and it possesses isotropy subgroups with one-dimensional fixed point spaces.
\end{theorem}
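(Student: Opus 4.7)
My plan is to follow the four-part structure of the proof of the analogous Theorem~\ref{thmsmallgroups2}, with the genuinely new work concentrated in the final computation of a one-dimensional fixed point subspace in $\RR^8$.

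For the set equality, the key observation is that as $b$ ranges over odd integers coprime to $a$, the parameters $\pi/b$ accumulate at $0$, so the elements $D(\pi/b)$ accumulate at the identity in $\OO{8}$. Since a closed subgroup of a Lie group that contains an accumulation point of the identity also contains the entire $1$-parameter subgroup through it, $\overline{\bigcup_{G\in\GG_a}G}$ contains the full circle $\{D(\psi):\psi\in S^1\}$. Together with $C, Q, S, V$, which lie in every $\Gab$, this gives the inclusion of the first description into the second, while the reverse inclusion is clear because $D(\pi/b)$ together with $C, Q, S, V$ generates each $\Gab$. Compactness and the Lie group structure then follow from Cartan's closed subgroup theorem inside $\OO{8}$. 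To pin down the dimension I would then show that the identity component of $\Ga$ is exactly $\{D(\psi):\psi\in S^1\}$: a direct calculation shows that conjugation by each of the discrete generators preserves this $1$-parameter family as a set (for instance $QD(\psi)Q^{-1}=D(-\psi)$, while $C, S, V$ commute with every $D(\psi)$), so the circle is normal in $\Ga$. The quotient of $\Ga$ by this circle is then generated by the images of finitely many finite-order elements, hence finite, forcing $\dim\Ga=1$.

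Absolute irreducibility is now immediate: $\Gab$ is a subgroup of $\Ga$ and already acts absolutely irreducibly on $\RR^8$ by Theorem~\ref{thmgroups}(3), so the commutant of $\Ga$ is contained in $\RR\cdot\mathbbm{1}_8$. The heart of the argument is to produce an isotropy subgroup with a one-dimensional fixed space, and I would do so by taking $y=e_1\in\RR^8$, corresponding to $(1,0)\in\HH\oplus\HH$, and exhibiting two concrete elements of $\Ga$ that stabilise it. The first is $CD(\pi/a)$, whose two blocks act on $\HH$ as $[e_a,e_a]$ and $[e_a^\rho,e_a]$ respectively; on the first block this is conjugation by $e_a$ with fixed plane $\CC=\RR+\RR\ii$, while on the second block it has only $\{0\}$ as its fixed set in $\HH$, because $\rho^2\equiv -1\pmod a$ together with $a>1$ odd forces $\rho\not\equiv\pm 1\pmod{2a}$. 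The second is $SQ$, whose blocks act as $[\jj,\jj]$ and $[-\jj,\jj]$; a short quaternion calculation gives fixed planes $\RR+\RR\jj$ and $\RR\ii+\RR\kk$ respectively. Intersecting the two fixed subspaces in $\RR^8$ produces $(\CC\cap(\RR+\RR\jj))\oplus(\{0\}\cap(\RR\ii+\RR\kk))=\RR\cdot 1\oplus\{0\}=\RR y$, which is one-dimensional. Since the isotropy of $y$ contains both elements above, its fixed space is contained in this intersection; but it also contains $\RR y$, so it equals $\RR y$ exactly.

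The main obstacle I expect is the rigorous verification in the second step that the Lie algebra is genuinely one-dimensional: this reduces to an adjoint-action calculation for the four discrete generators on the candidate $1$-parameter family, which is explicit but needs to be done carefully. Everything else boils down to arithmetic with the biquaternion pairs $[l,r]$ and the number-theoretic constraint on $\rho$ that comes for free from Proposition~\ref{prime}.
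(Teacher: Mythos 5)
Your proposal is correct and follows essentially the same route as the paper: the Lie-group structure and absolute irreducibility are inherited from the finite groups $\Gab$ exactly as in the $4$-dimensional case, and the one-dimensional fixed point space is produced by the very same pair of elements $CD(\pi/a)$ and $QS$, whose fixed subspaces $\left\langle \xi_1,\xi_2\right\rangle$ and $\left\langle \xi_1,\xi_3,\xi_6,\xi_8\right\rangle$ intersect in $\left\langle \xi_1\right\rangle$. Your write-up merely supplies more detail than the paper does for the density, normality-of-the-circle, and commutant arguments, and your block computation (using $\rho\not\equiv\pm1\bmod 2a$ to kill the fixed set of $[e_a^\rho,e_a]$) is a correct expansion of what the paper leaves implicit.
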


Furthermore we investigate the equivariant structure and the bifurcation behaviour of \Gab-symmetric systems and obtain the final result.
\begin{theorem}
	\label{thmes}
	Let $a \in \A$ with $a>5$ and $G \in \GG_a$. The $8$-dimensional representation of $G$ has no quadratic equivariants. The space of cubic equivariants $P_{G}^3\left(\RR^8, \RR^8 \right)$ is $5$-dimensional. A basis is given by the maps $E_1, \ldots , E_5$ (see \autoref{appce}). Furthermore these are equivariant with respect to the Lie groups \Ga.
\end{theorem}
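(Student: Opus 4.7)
The overall strategy is to combine an equivariant Molien series computation with the observation that $G \le \Ga$, so every $\Ga$-equivariant is automatically a $G$-equivariant. I would first pin down the dimensions $\dim P_G^2(\RR^8,\RR^8) = 0$ and $\dim P_G^3(\RR^8,\RR^8) = 5$ via Molien, then exhibit five explicit $\Ga$-equivariant cubics and verify their linear independence. By the dimension count this forces them to span $P_G^3(\RR^8,\RR^8)$, which simultaneously gives the basis and shows that every $G$-equivariant cubic is in fact $\Ga$-equivariant.

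For the dimension count I would use the equivariant Molien series in the form
\[
\Phi_G(t) \;=\; \frac{1}{|G|} \sum_{g \in G} \frac{\tr{g}}{\det(\mathbbm{1}_8 - tg)},
\]
in the spirit of \autocite{chillingworth2015molien}, and read off the coefficients of $t^2$ and $t^3$. Because $|G| = 16ab$, the sum is most efficiently organized by decomposing $G$ into cosets of the abelian subgroup $\langle C, D \rangle$ labelled by words in $Q, S, V$. The block structure of the generators (diagonal blocks for $C, D, Q, S$ and off-diagonal $V$) makes both trace and characteristic determinant easy to expand, and the $C$- and $D$-contributions reduce to standard character sums over cyclic groups of orders $2a$ and $2b$. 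Careful bookkeeping should give coefficient $0$ at $t^2$ and coefficient $5$ at $t^3$. The hypothesis $a > 5$ is expected to enter here, because for small $a$ additional coincidences among primitive $2a$-th roots of unity produce extra $G$-equivariants.

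Next I would take the explicit maps $E_1, \ldots, E_5$ from \autoref{appce} and verify their $\Ga$-equivariance directly against the generators $C, Q, S, V$ and the one-parameter family $D(\psi)$. Splitting coordinates as $(x,y) \in \RR^4 \oplus \RR^4$, the natural building blocks are the $\Ha$-invariants $|x|^2$ and $|y|^2$ together with low-degree $\Ha$-equivariants on each factor, combined with a cross-block term compatible with $V$. Equivariance under the continuous family $D(\psi)$ is the crucial check and, by differentiation at $\psi = 0$, reduces to a linear bracket relation with the infinitesimal generator; the discrete generators $C, Q, S, V$ require only direct matrix computation. Linear independence of the $E_i$ follows from evaluation at a generic point.

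The main obstacle will be the Molien computation, and specifically the verification that the threshold $a > 5$ yields exactly $5$ equivariants uniformly in all $b \in 2\NN+1$ with $\gcd(a,b)=1$. Once that count is secured, the $\Ga$-equivariance of the $E_i$ together with $G \le \Ga$ reduces the remaining assertions, in particular that the $E_i$ also generate the cubic $\Ga$-equivariants, to a routine linear-algebra matching of dimensions.
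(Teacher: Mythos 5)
Your overall strategy coincides with the paper's: compute $\dim P_G^2\left(\RR^8,\RR^8\right)=0$ and $\dim P_G^3\left(\RR^8,\RR^8\right)=5$ by a character-theoretic coefficient count, exhibit five explicit equivariant cubics, and let the dimension count upgrade them to a basis while transferring equivariance to the Lie group $\Ga \ge G$. The only organizational difference is that the paper deliberately avoids the Molien series $\sum_g \tr{g}/\det(\mathbbm{1}_8-tg)$ and instead uses Sattinger's formula $R_d = \frac{1}{|G|}\sum_{g}\chi_{(d)}(g)\chi(g)$, which extracts a single coefficient from the traces $\chi(g),\chi(g^2),\chi(g^3)$; the two are equivalent, and your coset decomposition by $\langle C,D\rangle$ matches the paper's normal form $C^{k_1}D^{k_2}Q^{l_1}S^{l_2}V^m$, of which only the classes with $l_1\in\lbrace 0,2\rbrace$ and $l_2=m=0$ contribute, all other elements having trace zero.

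The genuine gap is that you leave the decisive step --- the uniform evaluation $R_3=5$ for all $a>5$ and all admissible $b$ --- as ``careful bookkeeping,'' without identifying the mechanism that pins the threshold at exactly $a=5$. The character of the second diagonal block involves $\cos(\rho k_1\pi/a)$ with $\rho^2\equiv -1 \pmod{a}$, so after product-to-sum identities the triple sums collapse to deciding a finite list of congruences such as $\rho\mp 1$, $2(\rho\mp1)$, $2\rho$, $4\rho$, $\rho\mp 3$, $3\rho\pm 1 \equiv 0 \pmod{2a}$. All are excluded by $\rho^2\equiv-1$ except $\rho\equiv\pm3$ and $3\rho\equiv\mp1$, which force $9\equiv-1\pmod{a}$, hence $a=5$ (this is \autoref{cong} in the appendix). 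Your phrase ``coincidences among primitive $2a$-th roots of unity'' does not capture this: the relevant coincidences are between the two blocks (angle $\eta$ versus $\rho\eta$) and are governed by the arithmetic of the square root of $-1$, not by the roots of unity alone. Without this lemma, the claim that the count stabilizes for every $a>5$ --- rather than failing again at some larger $a$ --- is unsupported, and it is exactly this uniformity that the theorem asserts. A smaller inaccuracy: the building blocks of $E_1,\dots,E_4$ are the invariants $x_1^2+x_2^2,\ x_3^2+x_4^2,\dots$ of the individual $2\times 2$ rotation blocks, not the $\Ha$-invariants $|x|^2,|y|^2$ of the two $4$-dimensional halves; the latter are too coarse to produce five independent cubics. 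The remainder of your argument (direct verification against $C,Q,S,V$, differentiation of the $D(\psi)$-family at $\psi=0$, linear independence at a generic point, and the dimension-matching conclusion) is sound and matches the paper.
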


\begin{maintheorem}
	For the natural $8$-dimensional representation of $G \in \Ga$ with $5<a$ the only nontrivial isotropy type is generically symmetry breaking. Systems that are symmetric with respect to this representation generically have nontrivial symmetry breaking branches of steady states that are hyperbolic within the fixed point spaces.
\end{maintheorem}

\begin{rem}
	The bifurcation result holds true for the groups \Hab as well. The two nontrivial isotropy types of the $4$-dimensional representation are generically symmetry breaking. The proof uses exactly the same techniques as the proof for the main theorem. But as this is no new result on counterexamples to the Ize conjecture in $4$-dimensional representations, we omit the details and only present the proof for the $8$-dimensional case.
\end{rem}

\section[Prime numbers of the form \texorpdfstring{$1 \mod 4$}{1 mod 4}]{Prime numbers of the form $\boldsymbol{1 \mod 4}$}
To construct the groups acting absolutely irreducibly on $\RR^8$ from the ones acting on $\RR^4$ it is crucial that we restrict ourselves to numbers $a$ which are products of prime numbers of the form {$1 \mod 4$}. We quote some number theoretic results first that eventually deliver square roots of $-1$ in suitable congruences. The first and easiest result, which is proved using Wilson's theorem, can be found for example in \autocite{hardy1968an}. For a more thorough historical discussion of this question see \autocite{gauss1870disquisitiones}.

\begin{prop}
\label{propaprime}
Let $a$ be a prime number of the form $a= 1 \mod 4$. Then there exists $\rho \in \NN$ such that
\[ \rho^2 = -1 \mod a.\]
\end{prop}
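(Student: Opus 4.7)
The plan is to derive a square root of $-1$ directly from Wilson's theorem, as the excerpt hints. Recall that for any prime $a$, Wilson's theorem asserts $(a-1)! = -1 \mod a$. The key idea I would use is to fold the factorial in half by pairing each factor $i$ with its complement $a-i$, exploiting the congruence $a-i = -i \mod a$; this collapses $(a-1)!$ into $\pm\left(\left((a-1)/2\right)!\right)^2$ modulo $a$.

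Next I would write $a = 4k+1$, which is the only place the hypothesis $a = 1 \mod 4$ enters. Because $(a-1)/2 = 2k$ is even, the sign incurred by replacing each $a-i$ with $-i$ is $(-1)^{(a-1)/2} = +1$. Explicitly,
\[ (a-1)! = \prod_{i=1}^{(a-1)/2} i \cdot (a-i) = (-1)^{(a-1)/2}\left(\left(\tfrac{a-1}{2}\right)!\right)^2 = \left(\left(\tfrac{a-1}{2}\right)!\right)^2 \mod a. \]
Combining this with Wilson's theorem yields $\rho^2 = -1 \mod a$ for $\rho = \left(\tfrac{a-1}{2}\right)! \in \NN$.

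There is no substantial obstacle; the entire argument is the sign bookkeeping above, and the residue condition $a = 1 \mod 4$ enters only to force $(-1)^{(a-1)/2} = +1$. Were $a$ instead $3 \mod 4$, the same computation would merely reproduce the tautology $-1 = -1$ without producing a square root, consistent with the fact that such primes do not admit $\sqrt{-1}$, since any $x$ with $x^2 = -1$ has order $4$ in $(\ZZ/a\ZZ)^\times$ and hence requires $4 \mid a-1$.
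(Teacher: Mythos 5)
Your proof is correct and is precisely the classical Wilson's-theorem argument that the paper itself points to (it gives no proof of its own, merely noting the result ``is proved using Wilson's theorem'' and citing Hardy and Wright). The sign bookkeeping via $(-1)^{(a-1)/2}$ and the explicit witness $\rho = \left(\tfrac{a-1}{2}\right)!$ are exactly the standard route, so there is nothing to add.
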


\noindent
The next step is to apply a method based on Hensel's lemma (see \autocite{eisenbud1995commutative} or \autocite{milne2006elliptic} for the formulation that is used here) that provides the same result for prime powers.

\begin{prop}
\label{propaprimepower}
Let $\tilde{a}$ be a prime number of the form $\tilde{a}= 1 \mod 4$ and $a= \tilde{a}^s$ for some $s\in \NN$. Then there exists $\rho \in \NN$ such that
\[ \rho^2 = -1 \mod a.\]
\end{prop}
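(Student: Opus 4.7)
The plan is to lift the square root of $-1$ provided by \autoref{propaprime} from modulo $\tilde{a}$ up to modulo $\tilde{a}^s$ by applying Hensel's lemma to the polynomial $f(x) = x^2 + 1$. Concretely, \autoref{propaprime} furnishes $\rho_1 \in \NN$ with $\rho_1^2 \equiv -1 \pmod{\tilde{a}}$, i.e.\ $f(\rho_1) \equiv 0 \pmod{\tilde{a}}$, and Hensel's lemma will then produce a lift $\rho$ with $\rho^2 \equiv -1 \pmod{\tilde{a}^s}$, which is exactly the claim.

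To invoke Hensel I first have to verify the derivative condition $f'(\rho_1) = 2\rho_1 \not\equiv 0 \pmod{\tilde{a}}$. This follows from two elementary observations: the hypothesis $\tilde{a} \equiv 1 \pmod{4}$ forces $\tilde{a}$ to be odd, so $2$ is invertible modulo $\tilde{a}$; and the relation $\rho_1^2 \equiv -1 \pmod{\tilde{a}}$ excludes $\rho_1 \equiv 0 \pmod{\tilde{a}}$, so $\rho_1$ is also a unit. The derivative is then a product of units and hence invertible modulo $\tilde{a}$.

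With the hypothesis in place, Hensel's lemma (in the formulation cited in the excerpt) produces a $\tilde{a}$-adic solution of $x^2 + 1 = 0$ reducing to $\rho_1$, and its truncation modulo $\tilde{a}^s$ delivers the desired $\rho \in \NN$. If one prefers to avoid the $\tilde{a}$-adic language, the same argument can be phrased as an explicit induction on $s$: assuming $\rho_k^2 \equiv -1 \pmod{\tilde{a}^k}$, set $\rho_{k+1} = \rho_k + t\,\tilde{a}^k$ and expand
\[
\rho_{k+1}^2 + 1 \equiv (\rho_k^2 + 1) + 2\rho_k t\,\tilde{a}^k \pmod{\tilde{a}^{k+1}}.
\]
Because $2\rho_k$ is a unit modulo $\tilde{a}$, the resulting linear congruence $2\rho_k\, t \equiv -(\rho_k^2 + 1)/\tilde{a}^k \pmod{\tilde{a}}$ has a (unique) solution for $t$, completing the step and, after $s-1$ iterations, the induction.

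I do not anticipate a genuine obstacle; the entire argument is a textbook Hensel lift of the root supplied by the preceding proposition, and the only point requiring a line of verification is that $f'(\rho_1)$ is a unit, which is immediate from the parity of $\tilde{a}$ together with the defining congruence for $\rho_1$.
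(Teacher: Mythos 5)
Your proof is correct and follows the same route as the paper: a Hensel lift of the root of $X^2+1$ supplied by the preceding proposition, carried out modulo increasing powers of $\tilde{a}$. The only difference is that you explicitly verify the nondegeneracy condition $f'(\rho_1)=2\rho_1\not\equiv 0 \pmod{\tilde{a}}$ and write out the inductive step, details the paper leaves implicit.
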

\begin{proof}
	We use Hensel's lemma in the formulation given in \autocite{milne2006elliptic} with the polynomial
	\[ f(X) = X^2+1 .\]
	Performing an induction we obtain zeros in congruences of arbitrary powers of $a$, since we have at least the zero modulo $a$ from Proposition \ref{propaprime}.
\end{proof}

\noindent
Now we can apply the Chinese remainder theorem (see for example \autocite{eisenbud1995commutative}) to obtain the result for arbitrary products of prime powers. Hence this completes the proof for \autoref{prime}.


\section{Families of groups}
\label{ssma_gr}

\subsection[Representation on \texorpdfstring{$\RR^4$}{R4}]{Representation on $\boldsymbol{\RR^4}$}
In a first step towards the proof of the results on the $4$-dimensional representation we investigate the structure of the groups \Hab as defined before (see \eqref{H_ab}). Note that the generators are subject to several relations which we summarize in the following lemma.

\begin{lemma}
\label{smallrel}
The trivial relations are
\begin{align*}
[e_a,1][1,e_b] &= [1,e_b][e_a,1], &\quad [e_a,1][1,\jj] &= [1,\jj][e_a,1], &\quad [1,e_b][\jj,1] &= [\jj,1][1,e_b], \\
[1,\jj][\jj,1] &= [\jj,1][1,\jj], &\quad [1,\jj]^4 &= [1,1], &\quad [\jj,1]^4 &= [1,1]. 
\end{align*}
The fact that $e_p \cdot \jj = \jj \cdot \bar{e_p} = \jj \cdot e_p^{-1}$ yields
\[ [e_a,1][\jj,1] = [\jj,1][e_a,1]^{-1} = [\jj,1][e_a,1]^{2a-1}, \quad [1,e_b][1,\jj] = [1,\jj][1,e_b]^{-1} = [1,\jj][1,e_b]^{2b-1}. \]
From the identification $[-1,-1] = [1,1]$ we obtain
\[ [e_a,1]^a = [1,\jj]^2, \quad [1,e_b]^b = [1,\jj]^2, \quad [\jj,1]^2 = [1,\jj]^2. \]
\end{lemma}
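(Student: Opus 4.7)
My plan is to verify each relation by a direct computation inside $\Q\times\Q$, using only three ingredients: the componentwise multiplication law $[l_1,r_1][l_2,r_2]=[l_1l_2,r_1r_2]$ (which comes from composing the maps $x\mapsto lx\bar r$), the quaternionic anticommutation $\jj z=\bar z\jj$ that holds for every $z$ in the complex subfield $\RR\oplus\RR\ii\subset\HH$, and the identification $[-1,-1]=[1,1]$, i.e.\ $[l,r]=[-l,-r]$, that makes the map $\Q\times\Q\to\SO{4}$ well defined.

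With this setup the trivial block is immediate: two pairs commute as soon as their entries commute slot by slot, and in each of the claimed commutations at least one slot carries a $1$ or two copies of $\jj$; the orders $[1,\jj]^4=[\jj,1]^4=[1,1]$ are read off from $\jj^4=1$. For the second block the key identity is $e_a\jj=\jj e_a^{-1}$, which I would establish by noting that $\jj\ii\jj^{-1}=-\ii$ sends $e_a=e^{\pi\ii/a}$ to $e^{-\pi\ii/a}=e_a^{-1}$ under conjugation by $\jj$. Inserting this componentwise yields $[e_a,1][\jj,1]=[e_a\jj,1]=[\jj e_a^{-1},1]=[\jj,1][e_a,1]^{-1}$, and the alternative exponent $2a-1$ follows from $e_a^{2a}=(e_a^a)^2=(-1)^2=1$, so that $[e_a,1]^{-1}=[e_a,1]^{2a-1}$; the analogous computation with $e_b$ in the right slot, using $\jj z=\bar z\jj$ with $z=e_b$, gives the second line.

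For the third block the point is to use the $\pm$ identification to transfer a sign from one slot to the other: I would write $[e_a,1]^a=[e_a^a,1]=[-1,1]=[1,-1]=[1,\jj^2]=[1,\jj]^2$, and identically $[1,e_b]^b=[1,-1]=[1,\jj]^2$ as well as $[\jj,1]^2=[-1,1]=[1,-1]=[1,\jj]^2$. There is no genuine obstacle here; all of the relations reduce to one or two line manipulations once the three ingredients above are in hand, and the only bookkeeping issue is to remember the order of quaternionic multiplication when moving $\jj$ past a complex exponential and to invoke $[-1,-1]=[1,1]$ at exactly the right moment when shifting a minus sign between the slots.
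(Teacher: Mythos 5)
Your proof is correct and follows essentially the same route the paper intends: the lemma is justified there only by the inline remarks (componentwise multiplication of pairs, the anticommutation $e_p\jj=\jj\bar{e_p}$, and the identification $[-1,-1]=[1,1]$), and your slot-by-slot computations, including the sign transfer $[-1,1]=[1,-1]=[1,\jj]^2$, are exactly the intended verifications.
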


\noindent
These relations allow us to write every group element $h \in \Hab$ in the form
\begin{equation}
\label{smallgroupel}
h = [e_a,1]^{k_1} [1,e_b]^{k_2} [1,\jj]^{l_1} [\jj,1]^{l_2}
\end{equation}
where $k_1 \in \ZZ/a\ZZ, k_2 \in \ZZ/b\ZZ, l_1 \in \ZZ/4\ZZ$ and $l_2 \in \ZZ/2\ZZ$. We present the proof for \autoref{thmsmallgroups} in the following lemmas.

\begin{lemma}
For each odd $\m \in \NN$ and each decomposition $\m=a \cdot b$ as in \eqref{decm} the group \Hab forms a subgroup of \SO{4} of order $8\m$.
\end{lemma}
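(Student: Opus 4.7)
The containment $\Hab\le\SO{4}$ is essentially immediate: each generator is of the form $[l,r]$ with $l,r\in\Q$, and (by the discussion following \eqref{smallrep}) the map $[l,r]\colon x\mapsto lx\bar r$ is an element of $\SO{4}$. Since the representation is a group homomorphism on $\Q\times\Q/\{\pm(1,1)\}$, the subgroup generated by these four elements stays in $\SO{4}$. So the real content is the order count $|\Hab|=8\m$, which splits into an upper bound and a lower bound.

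For the upper bound, my plan is to appeal directly to the normal form \eqref{smallgroupel}, which has already been extracted from \autoref{smallrel}: every element of $\Hab$ can be written as $[e_a,1]^{k_1}[1,e_b]^{k_2}[1,\jj]^{l_1}[\jj,1]^{l_2}$ with $k_1\in\ZZ/a\ZZ$, $k_2\in\ZZ/b\ZZ$, $l_1\in\ZZ/4\ZZ$, $l_2\in\ZZ/2\ZZ$. The commutation relations are used to push all $[e_a,1]$'s and $[1,e_b]$'s to the left, the inversion relations absorb signs coming from conjugation by $[\jj,1]$ and $[1,\jj]$, and the identifications $[e_a,1]^a=[1,\jj]^2$, $[1,e_b]^b=[1,\jj]^2$, $[\jj,1]^2=[1,\jj]^2$ are what allow the stated exponent ranges. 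This immediately gives $|\Hab|\le a\cdot b\cdot 4\cdot 2=8\m$.

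For the lower bound I would show that these $8\m$ normal-form expressions yield distinct elements of $\SO{4}$. Using multiplicativity $[l_1,r_1][l_2,r_2]=[l_1l_2,r_1r_2]$ in $\Q\times\Q$ and $l_2\in\{0,1\}$, the generic normal form collapses to $[e_a^{k_1}\jj^{l_2},\,e_b^{k_2}\jj^{l_1}]$. If this equals $[1,1]$ in $\SO{4}$, then $(e_a^{k_1}\jj^{l_2},e_b^{k_2}\jj^{l_1})=\pm(1,1)$ in $\Q\times\Q$, since the kernel of the double cover is $\{\pm(1,1)\}$. I would then analyze each coordinate: $e_a^{k_1}$ lies in the complex plane $\langle 1,\ii\rangle$ while $\jj^{l_2}\in\{1,\jj\}$, so $e_a^{k_1}\jj^{l_2}\in\{\pm1\}$ forces $l_2=0$ and $e_a^{k_1}=\pm1$. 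Since $k_1\in\{0,\dots,a-1\}$ and $a$ is odd, only $k_1=0$ (giving $+1$) is available (the value $-1$ would require $k_1=a$). The identical argument in the second coordinate gives $l_1=0$ and $k_2=0$, proving that distinct normal forms give distinct group elements.

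I don't anticipate a serious obstacle. The only real subtlety is that one might worry about the two preimages $\pm(1,1)$ of the identity allowing extra collapse, but the oddness of $a$ (which excludes $e_a^{k_1}=-1$ in the given range of $k_1$) is precisely what rules this out; the coprimality $\gcd(a,b)=1$ will be needed later for the isotropy-type and absolute-irreducibility statements in \autoref{thmsmallgroups} (3), but not for the order count itself.
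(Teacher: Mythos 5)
Your proof is correct, but it takes a different (and more self-contained) route than the paper. The paper's proof is a one-line appeal to the classification of closed subgroups of $\SO{4}$ in Conway--Smith: it identifies $\Hab$ with the group denoted $\pm\left[D_{2a},D_{2b}\right]$ in their Table 4.2 and reads off the order $2\cdot 2a\cdot 2b=8\m$, adding only the remark that the order ``can also be derived directly from the representation of group elements in terms of the generators'' --- which is precisely the computation you carry out. What your approach buys is independence from the external classification and an explicit closed form $[e_a^{k_1}\jj^{l_2},\,e_b^{k_2}\jj^{l_1}]$ for every group element, which also makes the two-to-one subtlety completely transparent; what the paper's approach buys is brevity and a structural identification that is reused implicitly elsewhere. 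One small point of care in your lower bound: checking that only the trivial parameter choice maps to the identity does not by itself give injectivity of the normal form (the set of normal forms is not a priori closed under taking quotients with the induced parameter arithmetic), so you should compare two arbitrary normal forms, i.e.\ argue from $(e_a^{k_1}\jj^{l_2},e_b^{k_2}\jj^{l_1})=\pm(e_a^{k_1'}\jj^{l_2'},e_b^{k_2'}\jj^{l_1'})$. Fortunately your coordinate-wise analysis applies verbatim to that comparison: the $\langle 1,\ii\rangle$ versus $\langle\jj,\kk\rangle$ dichotomy forces $l_2=l_2'$ and $l_1\equiv l_1'\bmod 2$, and the oddness of $a$ and $b$ (excluding $e_a^{k}=-1$ and $e_b^{k}=-1$ in the allowed exponent ranges, including the case $\jj^{l_1}=-1$ for $l_1=2$) kills the sign ambiguity, exactly as you identified. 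So the gap is purely one of phrasing, not of substance.
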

\begin{proof}
Comparing \Hab with Table 4.2 in \autocite{conway2003on} and using their notation we find
\[ \Hab = \pm \left[ D_{2a}, D_{2b} \right] \]
where $D_{2n}$ is the \emph{dihedral group} of order $2n$. This group is of order $2 \cdot 2a \cdot 2b = 8\m$. In the notation of \autocite{conway2003on}, the $\pm [\ldots]$ is reflected in a factor $2$ in the group orders.
The order of \Hab can also be derived directly from the representation of group elements in terms of the generators \eqref{smallgroupel}.
\end{proof}

\begin{rem}
The definition of $\Hab$ is symmetric in $a$ and $b$:
\[ \Hab \cong H_{b,a}. \]
However choosing different decompositions for a value of $\m$ leads to groups of the same order which are not necessarily isomorphic. At the end of this section we have listed some concrete examples (see Table \ref{tabsmallgroups}).
\end{rem}

\begin{lemma}
\label{lemsmallsubg}
Let $b<b'$ where $b'$ is odd and relatively prime to $a$. If $b$ divides $b'$, \Hab is a subgroup of $H_{a,b'}$, i.e.
 \[\Hab \le H_{a,b'}. \]
\end{lemma}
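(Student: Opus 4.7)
The plan is to show each generator of $H_{a,b}$ lies in $H_{a,b'}$. Three of the four generators, namely $[e_a,1]$, $[1,\jj]$, and $[\jj,1]$, appear verbatim in the generating set of $H_{a,b'}$, so nothing needs to be said about them. The entire content of the lemma is therefore the single statement that $[1,e_b] \in H_{a,b'}$.

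To verify this, I would use the divisibility hypothesis directly. Write $b' = bk$ for some integer $k$. Since $e_p = e^{\pi \ii / p}$ is interpreted as a unitary quaternion and the multiplication $[1,r_1]\cdot[1,r_2] = [1, r_1 r_2]$ is inherited from $\Q \times \Q$, taking the $k$-th power gives
\[ [1, e_{b'}]^k = [1, e_{b'}^k] = \bigl[1, e^{k \pi \ii / b'}\bigr] = \bigl[1, e^{\pi \ii / b}\bigr] = [1, e_b]. \]
Thus $[1,e_b]$ is a power of the generator $[1,e_{b'}]$ of $H_{a,b'}$ and hence lies in $H_{a,b'}$. Combining with the first step, every generator of $H_{a,b}$ lies in $H_{a,b'}$, so $H_{a,b} \le H_{a,b'}$.

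There is essentially no obstacle here: the hypotheses that $b'$ is odd and coprime to $a$ are not used to prove the inclusion (they are used only to ensure that $H_{a,b'}$ is itself a well-defined member of the family $\mathcal{H}_a$, so that the statement makes sense). The numerical identity $e_{b'}^{b'/b} = e_b$ is the whole content of the argument.
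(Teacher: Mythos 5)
Your proof is correct and takes essentially the same route as the paper: both reduce the lemma to the single identity $e_{b'}^{q} = e_b$ for $b' = bq$, which exhibits $[1,e_b]$ as a power of the generator $[1,e_{b'}]$ of $H_{a,b'}$. Your added remarks (that the other three generators coincide verbatim, and that the coprimality/oddness hypotheses only ensure $H_{a,b'}$ is a well-defined member of the family) are accurate and slightly more explicit than the paper's version.
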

\begin{proof}
Let $b$ and $b'$ be as assumed above. There exists $q \in \NN$ with $b' = bq$. Then
\[ e_{b'}^q = e^{\frac{\pi \ii q}{b'}} = e^{\frac{\pi \ii}{b}} = e_b. \]
So we obtain $[1, e_b] \in H_{a,b'}$ and therefore
\[\Hab \le H_{a,b'}. \]
\end{proof}

\begin{rem}
The same result holds for the parameter $a$. The proof is completely analog to the one of the previous lemma.
\end{rem}

\noindent
In the next step we consider the action of \Hab on $\HH$ (see \eqref{smallrep}). 
To prove absolute irreducibility of the representation we follow the strategy of \autocite{lauterbach2010do} from where we use Lemma 6.2 (for the necessary background on representation theory see Chapter 4 in \autocite{chossat2000methods}).

\begin{lemma}
The action of \Hab on $\HH$ is absolutely irreducible.
\end{lemma}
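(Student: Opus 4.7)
The plan is to compute the commutant of the representation and show it equals $\RR\,\mathbbm{1}$; by Schur's lemma for real representations, this is equivalent to absolute irreducibility.

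First, I would identify $\HH$ with $\CC^2$ via
\[
    x_1 + \ii x_2 + \jj x_3 + \kk x_4 \;\longleftrightarrow\; (z_1,\, z_2) := (x_1 + \ii x_2,\; x_3 + \ii x_4),
\]
viewing the target as a pair of complex numbers in the subfield $\CC = \RR + \ii\RR \subset \HH$, so that every quaternion has the form $z_1 + z_2\jj$. Using $\jj z = \bar z \jj$ for $z \in \CC$, the four generators under the action \eqref{smallrep} become
\begin{align*}
    [e_a,1]\colon (z_1, z_2) &\mapsto (e_a z_1,\; e_a z_2), & [1,\jj]\colon (z_1, z_2) &\mapsto (z_2,\; -z_1), \\
    [1,e_b]\colon (z_1, z_2) &\mapsto (\bar{e_b}\, z_1,\; e_b\, z_2), & [\jj,1]\colon (z_1, z_2) &\mapsto (-\bar{z_2},\; \bar{z_1}).
\end{align*}
The first three are $\CC$-linear; the last is $\CC$-antilinear.

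Now let $M\colon \CC^2 \to \CC^2$ be $\RR$-linear and commute with each generator. I would peel off the four conditions in turn. Commutation with $[e_a,1]$, scalar multiplication by the non-real number $e_a$, forces $M$ to be $\CC$-linear (any $\RR$-linear self-map of $\CC^2$ commuting with a non-real complex scalar is $\CC$-linear), so $M$ is represented by a $2\times 2$ complex matrix. Commutation with the diagonal $[1,e_b]$, whose eigenvalues $\bar{e_b}$ and $e_b$ are distinct, then forces $M = \diag{p, s}$ for some $p, s \in \CC$. The signed swap $[1,\jj]$ immediately gives $p = s$, so $M = p\,\mathbbm{1}$. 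Finally, commutation of $p\,\mathbbm{1}$ with the antilinear $[\jj,1]$ turns scalar multiplication by $p$ into scalar multiplication by $\bar p$, forcing $p \in \RR$. Hence $M \in \RR\,\mathbbm{1}$, and the representation is absolutely irreducible.

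The main obstacle is conceptual rather than computational: the interaction between the three $\CC$-linear generators and the single $\CC$-antilinear one has to be tracked carefully, and it is the last, antilinear step that makes essential use of the fact we are working with a real (not merely complex) representation. The spectral non-degeneracies used in the first two steps rely on $a, b \ne 1$, which is part of the standing setup for admissible decompositions $\m = a b$.
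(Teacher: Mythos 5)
Your proof is correct. Each of the four commutation computations checks out: in the model $\HH\cong\CC^2$, $x=z_1+z_2\jj$, the generators act as you state (using $\jj z=\bar z\jj$), the linear/antilinear splitting $M=M_++M_-$ shows that commuting with the non-real scalar $e_a$ kills the antilinear part, the distinct diagonal entries $\bar{e_b}\ne e_b$ force diagonality, the signed swap forces equality of the diagonal entries, and the antilinear $[\jj,1]$ forces the remaining scalar to be real. The standing assumption $a,b\ne 1$ that you flag is indeed needed (and is equally implicit in the paper, which uses $\sin(\pi/b)\ne 0$). Your route is, however, genuinely different from the paper's. The paper does not compute the full $\RR$-linear commutant: it only determines which \emph{orthogonal} maps commute with \Hab, by parametrizing $\OO{4}$ through the two two-to-one maps from $\Q\times\Q$ onto $\SO{4}$ and $\OO{4}\setminus\SO{4}$, finds that only $[\pm 1,\pm 1]$, i.e.\ $\pm\mathbbm{1}$, commute, and then invokes Lemma~6.2 of \autocite{lauterbach2010do} to pass from ``only $\pm\mathbbm{1}$ in the orthogonal commutant'' to absolute irreducibility. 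That argument is shorter once the biquaternionic machinery and the quoted lemma are in place, and it stays entirely inside the quaternion-pair calculus used throughout the paper. Your argument is self-contained: it identifies the full commutant as $\RR\,\mathbbm{1}$ and appeals only to the standard Schur-type characterization of absolute irreducibility, at the cost of having to track the $\CC$-linear versus $\CC$-antilinear behaviour of the generators by hand. Both are valid; yours makes explicit the division-algebra structure that the paper's quoted lemma encapsulates.
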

\begin{proof}
We want to use the two two-to-one maps from the ordered pairs of unitary quaternions $\Q \times \Q$ to $\SO{4}$ and $\OO{4} \setminus \SO{4}$ respectively to find linear maps that commute with the group action. Let $[l,r] \in \Q \times \Q$ commute with \Hab. Consider the group element $[1,\jj]$. If $[l,r]$ commutes with $[1,\jj]$, then $r$ commutes with $\jj$. This yields $r=r_1 + r_2 \jj$ with $r_1,r_2 \in \RR$. Furthermore $[1,e_b] = \left[1, \cos \left(\pi/b \right) + \sin \left(\pi/b \right) \ii \right] \in \Hab$ and the relation $r \cdot e_b = e_b \cdot r$ yields $r_2 = 0$, since $\sin \left(\pi/b \right) \ne 0$. Therefore $r \in \RR$ and as $r$ is a unitary quaternion this gives $r= \pm 1$. Performing the same calculations for $l$ using the elements $[\jj,1]$ and $[e_a,1]$, we obtain $l = \pm 1$ as well. All pairs of unitary quaternions that commute with \Hab are $[\pm 1, \pm 1]$. Application of the two-to-one maps from $\Q \times \Q$ to $\SO{4}$ and $\OO{4} \setminus \SO{4}$ respectively yields that the only elements of $\OO{4}$ commuting with the group action are $\pm \mathbbm{1}$. Lemma 6.2 from \autocite{lauterbach2010do} implies absolute irreducibility.
\end{proof}

\noindent
In the following lemmas we investigate the isotropy of the action of \Hab on $\HH$. Using Lemmas 6.3 and 6.4 from \autocite{lauterbach2010do} as well as the relations of generating elements (\autoref{smallrel}) we may prove:

\begin{lemma}
\label{smalliso1}
Let $h = [e_a,1]^{k_1} [1,e_b]^{k_2} [1,\jj]^{l_1} [\jj,1]^{l_2} \in \Hab$ as in \eqref{smallgroupel} with $k_1 \in \rkr{a}, k_2 \in \rkr{b}$ as well as $l_1 \in \lbrace 1,3 \rbrace$ and $l_2=1$. Then
\begin{enumerate}[label={(\arabic*)}]
	\item $h$ fixes a $2$-dimensional subspace of $\HH$;
	\item $h$ is of order $2$;
	\item For $l_1=1$ the fixed point space of $h$ is
		\[ \left\langle \begin{pmatrix}
		\cos \left(\frac{1}{2} \left(\frac{k_1}{a} - \frac{k_2}{b}\right)\pi \right) \\
		\sin \left(\frac{1}{2} \left(\frac{k_1}{a} - \frac{k_2}{b}\right)\pi \right) \\
		0 \\
		0
		\end{pmatrix}, \begin{pmatrix}
		0 \\
		0 \\
		\cos \left( \frac{1}{2} \left(\frac{k_1}{a} + \frac{k_2}{b}\right)\pi \right) \\
		\sin \left( \frac{1}{2} \left(\frac{k_1}{a} + \frac{k_2}{b}\right)\pi \right)
		\end{pmatrix} \right\rangle. \]
		For $l_1=3$ the fixed point space of $h$ is
		\[ \left\langle \begin{pmatrix}
		\cos \left(\frac{1}{2} \left(\frac{k_1}{a} - \frac{k_2}{b} +1 \right)\pi \right) \\
		\sin \left(\frac{1}{2} \left(\frac{k_1}{a} - \frac{k_2}{b} +1 \right)\pi \right) \\
		0 \\
		0
		\end{pmatrix}, \begin{pmatrix}
		0 \\
		0 \\
		\cos \left( \frac{1}{2} \left(\frac{k_1}{a} + \frac{k_2}{b} +1 \right)\pi \right) \\
		\sin \left( \frac{1}{2} \left(\frac{k_1}{a} + \frac{k_2}{b} +1 \right)\pi \right)
		\end{pmatrix} \right\rangle. \]
\end{enumerate}
\end{lemma}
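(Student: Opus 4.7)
The plan is to reduce everything to a computation with a single pair of quaternions. Using that the product law in $\Q \times \Q$ mirrors the group operation of the representation, namely $[l_1, r_1][l_2, r_2] = [l_1 l_2, r_1 r_2]$, together with the commutation and power relations collected in \autoref{smallrel}, I would first rewrite $h$ as a single pair $[L, R]$. A short computation using $\jj\alpha = \bar\alpha\jj$ for $\alpha \in \CC \subset \HH$ shows that all the factors $e_a^{k_1}$ and $e_b^{k_2}$ can be collected to the left of the $\jj$'s, so that $L = e_a^{k_1}\jj$ and $R = \varepsilon\, e_b^{k_2}\jj$ with $\varepsilon = +1$ if $l_1 = 1$ and $\varepsilon = -1$ if $l_1 = 3$ (coming from $\jj^3 = -\jj$).

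Second, I would establish that $h$ has order exactly $2$. Since $L^2 = e_a^{k_1}\jj\, e_a^{k_1}\jj = e_a^{k_1}e_a^{-k_1}\jj^2 = -1$, and likewise $R^2 = -1$ irrespective of $\varepsilon$, one has $h^2 = [-1,-1] = [1,1]$ under the standard identification used throughout the paper. As $[L,R]$ is manifestly not $[\pm 1, \pm 1]$ (the $\jj$-component cannot be a real unit), $h$ is nontrivial, hence of order $2$.

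Third, for the fixed point space I would decompose $x \in \HH$ as $x = z + w\jj$ with $z, w \in \CC = \RR + \RR\ii$ and compute $Lx\bar R$ using $\bar\jj = -\jj$ and the same anticommutation rule. The crucial observation is that conjugation $\jj \cdot \bar\jj$ complex-conjugates each component, so the action splits diagonally: the equation $h \cdot x = x$ becomes two independent equations of the form $\bar z = \mu z$ and $\bar w = \nu w$ with $\mu, \nu$ products of the unit complex numbers $e_a^{\pm k_1}, e_b^{\pm k_2}$ and (when $l_1 = 3$) an extra factor $-1$. Each such equation determines a real one-dimensional subspace of $\CC$, namely the real span of $e^{\ii\theta}$ where $2\theta = -\arg\mu$. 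Reading off $\theta$ in the four sub-cases yields exactly the halved sums and differences $\tfrac{1}{2}(k_1/a \pm k_2/b)\pi$ claimed in the lemma; the extra $-1$ from $l_1=3$ contributes the additional $\pi/2$-shift visible in the second basis.

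The main obstacle I expect is purely bookkeeping: making sure the anticommutation rule $\jj\alpha = \bar\alpha\jj$ is applied consistently on both sides of $x$, and that the sign introduced by $\varepsilon$ in $R$ translates into precisely the $+1$ shift in the arguments (and not into, say, a swap between the two summands of the fixed point space). Once that is tracked correctly, the trigonometric form of the basis vectors follows by writing $e^{\ii\theta} = \cos\theta + \sin\theta\,\ii$ and identifying $\HH$ with $\RR^4$ via the coordinates introduced in Section~1.
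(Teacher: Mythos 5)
Your proposal is correct and amounts to the same quaternionic computation that the paper delegates to Lemmas 6.3 and 6.4 of the cited reference: writing $h$ as the single pair $[e_a^{k_1}\jj,\ \varepsilon\, e_b^{k_2}\jj]$ (no anticommutation is even needed for this collection step, since the left and right slots of $\Q\times\Q$ multiply independently and the complex factors already precede the $\jj$'s), squaring to get $[-1,-1]=[1,1]$, and splitting $x=z+w\jj$ to reduce $hx=x$ to two decoupled antilinear equations on $\CC$, each of which has a one-real-dimensional line of solutions. The angles you read off agree with the stated basis vectors, including the extra $+1$ shift contributed by $\jj^3=-\jj$ in the case $l_1=3$.
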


\noindent
The previous lemma describes restrictions on the exponents in the representation (\ref{smallgroupel}) that guarantee nontrivial isotropy. These are in fact all the elements with nontrivial fixed point spaces. Using Lemmas 6.5 and 6.6 in \autocite{lauterbach2010do}, we see that elements which do not meet these restrictions fix the origin only.
\begin{lemma}
\label{smalliso2}
Let $h = [e_a,1]^{k_1} [1,e_b]^{k_2} [1,\jj]^{l_1} [\jj,1]^{l_2} \in \Hab \setminus \lbrace[1,1]\rbrace$ as in \eqref{smallgroupel} with $k_1 \in \rkr{a}, k_2 \in \rkr{b}$ as well as $l_1 \not\in \lbrace 1,3 \rbrace$ or $l_2\ne1$. Then $h$ fixes only $0 \in \HH$.
\end{lemma}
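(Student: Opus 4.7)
The plan is to work case-by-case through the four remaining combinations of exponents $(l_1,l_2)\in\{(0,0),(2,0),(0,1),(2,1)\}$ not covered by \autoref{smalliso1}, and in each case verify directly that the equation $lx\bar r=x$ has only the trivial solution. Using the formulas of \autoref{smallrel}, a representative of $h$ in $\Q\times\Q$ is $(l,r)=(e_a^{k_1}\jj^{l_2},\,e_b^{k_2}\jj^{l_1})$; since $\jj^2=-1$, this means $r$ equals $e_b^{k_2}$ or $-e_b^{k_2}$ (according as $l_1=0$ or $l_1=2$) and analogously $l$ equals $e_a^{k_1}$ or $e_a^{k_1}\jj$.

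The standard trick is to split $x=z+w\jj$ with $z,w$ in the complex subfield $\CC=\langle 1,\ii\rangle\subset\HH$ and use the commutation rule $\jj c=\bar c\,\jj$ together with $\bar c=c^{-1}$ for $c\in\CC$ with $|c|=1$. In the cases with $l_2=0$, the equation $lx\bar r=x$ splits into two independent scalar equations
\[ z\bigl(e_a^{k_1}\mp e_b^{k_2}-1\text{-type factor}\bigr)=0,\qquad w\bigl(e_a^{k_1}\mp e_b^{-k_2}-1\text{-type factor}\bigr)=0, \]
(signs depending on whether $l_1=0$ or $l_1=2$), so nontrivial fixed points force one of the identities $e_a^{k_1}=\pm e_b^{\pm k_2}$. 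In the cases with $l_2=1$, multiplication by $\jj$ mixes $z$ with $\bar w$, yielding a coupled $2\times 2$ system whose consistency condition reduces to $e_b^{2k_2}=\pm 1$.

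The key number-theoretic step is then elementary: writing $e_a^{k_1}=e^{\ii\pi k_1/a}$ and $e_b^{k_2}=e^{\ii\pi k_2/b}$ with $0\le k_1<a$, $0\le k_2<b$, an identity $e_a^{k_1}=\pm e_b^{\pm k_2}$ becomes $bk_1\pm ak_2\equiv 0$ or $\equiv ab\pmod{2ab}$. The range constraints, together with $\gcd(a,b)=1$, force $k_1=k_2=0$, which combined with the assumption on $(l_1,l_2)$ returns $h=[1,1]$, contrary to hypothesis. The alternative $e_b^{2k_2}=-1$ reads $2k_2\equiv b\pmod{2b}$, which fails because $b$ is odd. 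Hence in every case no nonzero $x$ is fixed.

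The main obstacle, and the only point that requires genuine care, is the bookkeeping of the identification $[l,r]=[-l,-r]$ on the representatives, since this could in principle enlarge the fixed point set; however, it merely adds the equation $lx\bar r=-x$, which in each of the cases above leads to the same type of conditions (with an extra sign that still falls within the list $\pm e_b^{\pm k_2}$) and is again eliminated by the coprimality $\gcd(a,b)=1$ and the oddness of $b$. All other steps are essentially bookkeeping of the quaternion multiplication, entirely parallel to Lemmas~6.5 and~6.6 of \autocite{lauterbach2010do}, to which one may appeal for the cases not involving the exchange factor $\jj^{l_2}$.
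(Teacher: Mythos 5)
Your overall strategy --- reducing $lx\bar r=x$ to scalar conditions via the splitting $\HH=\CC\oplus\CC\jj$ and killing them with $\gcd(a,b)=1$ and the oddness of $a$ and $b$ --- is sound, and it is more self-contained than the paper, which gives no proof here and simply cites Lemmas 6.5 and 6.6 of the 2010 paper. But your case enumeration is wrong, and the error propagates into the structure of the argument. The complement of ``$l_1\in\{1,3\}$ and $l_2=1$'' consists of \emph{six} combinations, not four: you omit $(l_1,l_2)=(1,0)$ and $(3,0)$. These omitted cases do \emph{not} behave like your other $l_2=0$ cases, because whether $x\mapsto lx\bar r$ preserves the splitting $\CC\oplus\CC\jj$ is governed by the parity of $l_1+l_2$, not by $l_2$ alone (compare the block form of $h$ used in the character computation of Section 4). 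For $(1,0)$ and $(3,0)$ one has $r=e_b^{k_2}\jj^{\pm1}\notin\CC$, so right multiplication by $\bar r$ exchanges the two summands and yields a coupled system $z=\pm e_a^{k_1}e_b^{-k_2}w$, $w=\mp e_a^{k_1}e_b^{k_2}z$, whose consistency condition is $e_a^{2k_1}=-1$, i.e.\ $2k_1\equiv a\pmod{2a}$ --- impossible since $a$ is odd. The gap is thus filled by the mirror image of your $(0,1),(2,1)$ argument, but as written your proof genuinely skips these elements (which include, for instance, $[1,\jj]$ itself).

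Two smaller points. First, the identification $[l,r]=[-l,-r]$ does not ``add the equation $lx\bar r=-x$'': both representatives define the \emph{same} linear map, since $(-l)x\overline{(-r)}=lx\bar r$, so the fixed point set is unambiguous and no extra bookkeeping is needed; your worry here is unfounded (though harmless). Second, in the split cases your conclusion ``forces $k_1=k_2=0$, hence $h=[1,1]$'' must be stated sign-sensitively: for $l_1=0$ the condition is $e_a^{k_1}=e_b^{\pm k_2}$, which indeed forces $k_1=k_2=0$ and hence $h=[1,1]$, which is excluded; but for $l_1=2$ the condition is $e_a^{k_1}=-e_b^{\pm k_2}$, i.e.\ $bk_1\mp ak_2\equiv ab\pmod{2ab}$, which has no solution at all in the admissible range --- note that $k_1=k_2=0$ with $l_1=2$ gives $h=-\mathbbm{1}_4\ne[1,1]$, so this case cannot be dismissed by ``returns $h=[1,1]$''.
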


The form of group elements that have nontrivial fixed points from \autoref{smalliso1} guarantees that nontrivial isotropy subgroups can only contain one such element. The product of two different elements with fixed point space can not fix a point besides $0$.

\begin{lemma}
\label{smalliso3}
The nontrivial isotropy subgroups of \Hab are generated by precisely one group element.
\end{lemma}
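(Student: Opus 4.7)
The plan is to combine the two previous lemmas with a direct product computation in the normal form \eqref{smallgroupel}. By \autoref{smalliso2}, any non-identity element $h \in \Hab$ that fixes some nonzero vector must be of the form
\[ h = [e_a,1]^{k_1} [1,e_b]^{k_2} [1,\jj]^{l_1} [\jj,1]^{l_2} \]
with $l_1 \in \{1,3\}$ and $l_2 = 1$ (otherwise $h$ fixes only the origin). Consequently, every non-identity element of an isotropy subgroup of a point $v \neq 0$ lies among the involutions described in \autoref{smalliso1}.

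Next, I would take two such elements
\[ h_1 = [e_a,1]^{k_1}[1,e_b]^{k_2}[1,\jj]^{l_1}[\jj,1], \qquad h_2 = [e_a,1]^{k_1'}[1,e_b]^{k_2'}[1,\jj]^{l_1'}[\jj,1], \]
and use the commutation rules from \autoref{smallrel} to move all $[\jj,1]$ factors in the product $h_1 h_2$ to the right. Since $[\jj,1]$ commutes with $[1,e_b]$ and $[1,\jj]$, and satisfies $[\jj,1][e_a,1] = [e_a,1]^{-1}[\jj,1]$, one gets (after further collecting $[1,\jj]$ past $[1,e_b]$ via $[1,\jj][1,e_b] = [1,e_b]^{-1}[1,\jj]$)
\[ h_1 h_2 = [e_a,1]^{k_1 - k_1'}\,[1,e_b]^{k_2 + (-1)^{l_1} k_2'}\,[1,\jj]^{l_1 + l_1'}\,[\jj,1]^2. \]
Because $[\jj,1]^2 = [1,\jj]^2$ and $l_1 + l_1' \in \{2,4,6\}$ is even, the normal form of $h_1 h_2$ has $l_2$-exponent equal to $0$.

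Now I would invoke \autoref{smalliso2} once more: any element whose normal form has $l_2 = 0$ either equals the identity or fixes only $0$. Thus if both $h_1$ and $h_2$ lie in the isotropy subgroup $\Hab_v$ of some $v \neq 0$, then $h_1 h_2$ also fixes $v$, and the displayed computation forces $h_1 h_2 = [1,1]$, i.e.\ $h_2 = h_1^{-1} = h_1$ (using that such $h_1$ is an involution by \autoref{smalliso1}). This proves that $\Hab_v$ contains at most one non-identity element, and therefore $\Hab_v = \langle h_1 \rangle$ is generated by a single element.

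The only delicate step is the bookkeeping in the product computation, and in particular confirming that the parity of the final $[\jj,1]$-exponent is zero regardless of the choice of $l_1, l_1' \in \{1,3\}$. Once this is in place, everything else follows mechanically from \autoref{smalliso1} and \autoref{smalliso2}.
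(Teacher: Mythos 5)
Your proof is correct and follows essentially the same route the paper takes: the paper likewise argues that by Lemma~\ref{smalliso1} and Lemma~\ref{smalliso2} only elements with $l_1\in\{1,3\}$, $l_2=1$ have nontrivial fixed points, and that the product of two such elements (having $[\jj,1]$-exponent $0$ in normal form) can fix only the origin unless it is the identity. Your explicit bookkeeping with the commutation relations just fills in the details the paper leaves to the reader.
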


\noindent
To shorten notation we want to name the two types of isotropy subgroups as follows for the rest of this subsection
\begin{align*}
K	&= \langle h \rangle = \langle [e_a,1]^{k_1} [1,e_b]^{k_2} [1,\jj] [\jj, 1] \rangle \\
K'	&= \langle h' \rangle = \langle [e_a,1]^{k_1} [1,e_b]^{k_2} [1,\jj]^3 [\jj, 1] \rangle.
\end{align*}

\begin{lemma}
\label{smalliso4}
The isotropy groups $K$ and $K'$ are conjugate either to $\langle [1, \jj][\jj, 1] \rangle = \langle [\jj, \jj] \rangle$ or to $\langle [1, \jj]^3 [\jj,1] \rangle = \langle -[\jj, \jj] \rangle$.
\end{lemma}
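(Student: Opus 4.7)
The plan is to conjugate the generator of $K$ by a suitable element of the abelian subgroup $\langle [e_a,1], [1,e_b]\rangle \le \Hab$ and exploit the oddness of $a$ and $b$ to annihilate the exponents $k_1, k_2$. The same conjugating element will then work for $K'$.

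First I would fix $g = [e_a,1]^\alpha [1,e_b]^\beta$ with $\alpha \in \rkr{a}$, $\beta \in \rkr{b}$ to be determined, and compute $ghg^{-1}$ for $h = [e_a,1]^{k_1}[1,e_b]^{k_2}[1,\jj][\jj,1]$. The inputs are the relations from \autoref{smallrel}: $[e_a,1]$ and $[1,e_b]$ commute with one another and with their own quaternion partner ($[1,\jj]$ and $[\jj,1]$ respectively), while they anti-commute with the opposite quaternion generator in the sense $[e_a,1][\jj,1] = [\jj,1][e_a,1]^{-1}$ and $[1,e_b][1,\jj] = [1,\jj][1,e_b]^{-1}$. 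Consequently, pushing $g^{-1} = [1,e_b]^{-\beta}[e_a,1]^{-\alpha}$ leftward through the block $[1,\jj][\jj,1]$ flips both exponents, so a copy of $g$ (rather than $g^{-1}$) emerges on the left and combines with the existing $g$ to yield
\begin{equation*}
g h g^{-1} = [e_a,1]^{2\alpha+k_1}[1,e_b]^{2\beta+k_2}[1,\jj][\jj,1].
\end{equation*}
Since $a$ and $b$ are odd, $2$ is invertible in both $\rkr{a}$ and $\rkr{b}$, so I can solve $2\alpha \equiv -k_1 \pmod a$ and $2\beta \equiv -k_2 \pmod b$. For this choice $ghg^{-1} = [1,\jj][\jj,1] = [\jj,\jj]$, so $K$ is conjugate to $\langle [\jj,\jj]\rangle$.

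For $K'$ the very same element $g$ works, because $[1,\jj]^3$ enjoys the same commutation pattern with $[e_a,1]$ (they still commute) and with $[1,e_b]$ (iterating $[1,e_b][1,\jj] = [1,\jj][1,e_b]^{-1}$ three times yields $[1,e_b][1,\jj]^3 = [1,\jj]^3[1,e_b]^{-1}$). The same bookkeeping produces $gh'g^{-1} = [1,\jj]^3[\jj,1] = [\jj,-\jj]$, which as a linear map on $\HH$ coincides with $-[\jj,\jj]$; hence $K'$ is conjugate to $\langle -[\jj,\jj]\rangle$.

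I do not anticipate any conceptual obstacle: the whole argument reduces to a careful commutator manipulation inside $\Hab$. The two points needing attention are (i) getting the signs right when sliding the rotation generators past the quaternion generators, so that the new exponents emerge as $2\alpha + k_1$ and $2\beta + k_2$ rather than something that cancels; and (ii) invoking the hypothesis that $a$ and $b$ are odd at precisely the step where $2$ must be inverted modulo $a$ and modulo $b$ to solve the resulting linear congruences.
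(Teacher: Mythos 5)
Your overall strategy is the same as the paper's (conjugating by an element $g=[e_a,1]^\alpha[1,e_b]^\beta$ of the ``toral'' part and using the oddness of $a$ and $b$ to invert $2$), and your commutator bookkeeping up to
$ghg^{-1}=[e_a,1]^{2\alpha+k_1}[1,e_b]^{2\beta+k_2}[1,\jj][\jj,1]$
is correct. The error is in the last step: you reduce the exponents modulo $a$ and $b$, but as group elements $[e_a,1]$ and $[1,e_b]$ have orders $2a$ and $2b$, since by \autoref{smallrel} one has $[e_a,1]^a=[1,e_b]^b=[1,\jj]^2$, which is the central element $-\mathbbm{1}\neq\mathbbm{1}$. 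Solving $2\alpha\equiv -k_1 \pmod a$ only forces $[e_a,1]^{2\alpha+k_1}\in\{[1,1],[1,\jj]^2\}$; the stronger congruence $2\alpha\equiv -k_1\pmod{2a}$ is solvable exactly when $k_1$ is even (the left side is even, and $a$ is odd). What your computation actually yields is
\begin{equation*}
ghg^{-1}=\bigl([1,\jj]^2\bigr)^{k_1+k_2}\,[1,\jj][\jj,1],
\end{equation*}
so $K$ is conjugate to $\langle[\jj,\jj]\rangle$ when $k_1+k_2$ is even and to $\langle-[\jj,\jj]\rangle$ when $k_1+k_2$ is odd — which is precisely the paper's conclusion. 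Your sharper assertion that $K$ always lands on $\langle[\jj,\jj]\rangle$ and $K'$ always on $\langle-[\jj,\jj]\rangle$ is false: for $h=[e_a,1][1,\jj][\jj,1]$ (i.e.\ $k_1=1$, $k_2=0$) the exponent $2\alpha+1$ is odd and can never be $\equiv 0\pmod{2a}$, and one checks that this $K$ is conjugate to $\langle-[\jj,\jj]\rangle$. Since $\langle[\jj,\jj]\rangle$ and $\langle-[\jj,\jj]\rangle$ are \emph{not} conjugate in \Hab (they represent the two distinct nontrivial isotropy types of \autoref{thmsmallgroups}), this is a genuine misstatement and not merely a loss of precision.

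The repair is immediate and leaves your argument intact: keep the exponents as integers (or work modulo $2a$ and $2b$), record the surviving central factors $([1,\jj]^2)^{k_1}$ and $([1,\jj]^2)^{k_2}$, and observe that their product decides which of the two target subgroups you reach. The statement of the lemma, being an ``either\ldots or'', is then proved. The same caveat applies verbatim to your treatment of $K'$.
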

\begin{proof}
This can be calculated directly using the relations on the generating elements of $K = \langle h \rangle$ and $H' = \langle h' \rangle$ and the fact that $a$ and $b$ are odd. For $K$ we obtain
\begin{align*}
h	&= \left( [1, \jj]^2 \left( [e_a,1]^{\frac{a+1}{2}} \right)^2 \right)^{k_1} \left( [1,\jj]^2 \left( [1,e_b]^{\frac{b+1}{2}} \right)^2 \right)^{k_2} [1,\jj] [\jj, 1] \\
	&= \tilde{h} \left( [1,\jj]^2 \right)^{k_1 + k_2} [1,\jj] [\jj, 1] \tilde{h}^{-1}
\end{align*}
with 
\[ \tilde{h} = [e_a,1]^{k_1 \frac{a+1}{2}} [1,e_b]^{k_2 \frac{b+1}{2}} .\]
Since
\[ \left( [1,\jj]^2 \right)^{k_1 + k_2} = \begin{cases}
[1,1] \quad	&\text{for } k_1+k_2 \text{ even}, \\
[1, \jj]^2 \quad	&\text{for } k_1+k_2 \text{ odd},
\end{cases} \]
this yields the claim for $K$. The proof for $K'$ is completely alike.
\end{proof}

\noindent
The previous lemma completes the proof for \autoref{thmsmallgroups} on the groups \Hab and their $4$-dimensional representations. The proofs for the Lie group structure are straightforward from the corresponding properties of the finite groups, where the $2$-dimensional Lie group (\autoref{thmsmallgroups3}) arises in the same manner as the one-dimensional Lie group (\autoref{thmsmallgroups2}).

\begin{proof}[Proof of \autoref{thmsmallgroups2}]
\begin{enumerate}[label={(\arabic*)}]
\item The closure of the union of the groups \Hab over all suitable $b$ is the smallest group that contains the elements $\left\lbrace [e_a,1], [1,\jj], [\jj,1], [1,e^{\psi\ii}] \mid \phi \in S^1 \right\rbrace$ (note that $[1,e^{(\pi + \psi)\ii}] = [-1,e^{\psi\ii}]$ for $\psi \in [0, \pi)$). Write it as follows
\[ \Ha = \left\langle [e_a,1], [1,\jj], [\jj,1], [1,e^{\psi\ii}] \mid \psi \in S^1 \right\rangle. \]
This is a compact $1$-dimensional Lie group. It contains an element $[e_a,e_a]$ which fixes $\langle 1, \ii \rangle$ as a real subspace of $\HH$. Furthermore $[\jj, \jj] \in \Hab$ fixes the real subspace $\langle 1, \jj \rangle$. Thus the subgroup generated by these two elements $\langle [e_a,e_a], [\jj, \jj] \rangle$ fixes the one-dimensional real subspace $\langle 1 \rangle \subset \HH$.
\item Write \Ha and $\mathbf{H}_{a'}$ in terms of generators. Then for $a,a' \in 2\NN+1$ with $a < a'$ and $a$ divides $a'$ we obtain $[e_a,1] \in \mathbf{H}_{a'}$. The claim follows as in the proof of Theorem \ref{thmsmallgroups}.
\end{enumerate}
\end{proof}

For the construction of the groups acting on $\RR^8$ we need the matrix representation of the groups \Hab with respect to the standard basis of $\RR^4$. It can be calculated directly via applying the generators to the basis elements. One obtains
\begin{equation}
\label{MatrixRep}
\begin{alignedat}{3}
c &=
\begin{pmatrix}
\rr\left( \frac{\pi}{a} \right)	& 0 \\
0								& \rr\left( \frac{\pi}{a} \right)
\end{pmatrix}& &\quad
d& &=
\begin{pmatrix}
\rr\left( -\frac{\pi}{b} \right)	& 0 \\
0									& \rr\left( \frac{\pi}{b} \right)
\end{pmatrix} \\
q &= \begin{pmatrix}
0 & 0 & 1 & 0 \\
0 & 0 & 0 & 1 \\
-1 & 0 & 0 & 0 \\
0 & -1 & 0 & 0
\end{pmatrix}& &\quad
s& &= \begin{pmatrix}
0 & 0 & -1 & 0 \\
0 & 0 & 0 & 1 \\
1 & 0 & 0 & 0 \\
0 & -1 & 0 & 0
\end{pmatrix}
\end{alignedat}
\end{equation}where $r(\psi)$ is again the $2$-dimensional rotation matrix by an angle $\psi$. One can see that the first two elements correspond to blockwise rotations in two coordinates respectively. 

\begin{lemma}
\label{smallmatrel}
The matrix generators are subject to the same relations as the corresponding pairs of quaternions (see Lemma \ref{smallrel}): $cd = dc, cq = qc, ds = sd, qs = sq, q^4 = \mathbbm{1}_4, s^4 = \mathbbm{1}_4, cs = sc^{-1} = sc^{2a-1}, dq = qd^{-1} = qd^{2b-1}, c^a = q^2, d^b = q^2, s^2 = q^2$.
\end{lemma}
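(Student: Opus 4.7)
The plan is to observe that this lemma is essentially automatic from what has already been set up. By equation \eqref{smallrep}, the map $[l,r] \mapsto (x \mapsto l x \bar r)$ is a faithful isomorphism $\Q \times \Q \to \SO{4}$ under the identification $[1,1]=[-1,-1]$, and the matrices $c,d,q,s$ in \eqref{MatrixRep} are by construction the images of the generators $[e_a,1],[1,e_b],[1,\jj],[\jj,1]$ of $\Hab$ under this isomorphism. Consequently, every identity among these quaternion pairs that was recorded in Lemma \ref{smallrel} transports verbatim to an identity among $c,d,q,s$. In particular the relations $c^a=q^2$, $d^b=q^2$, $s^2=q^2$ — which in the quaternion world rely on the identification $[1,1]=[-1,-1]$ — become in the matrix world the assertion that both sides equal $-\mathbbm{1}_4$, precisely because the isomorphism collapses the $\pm$ identification into genuine matrix equality.

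So the cleanest writeup is one sentence: apply the isomorphism to Lemma \ref{smallrel}. For readers who prefer explicit matrices, I would in parallel indicate the direct verifications, which are all elementary: the commutativities $cd=dc$, $cq=qc$, $ds=sd$ follow from the block structure (the two $2\times 2$ diagonal blocks of $c$ coincide, so swapping them via $q$ commutes with $c$, and analogously for $d$ and $s$); the powers $q^4=s^4=\mathbbm{1}_4$ and $c^a=d^b=s^2=q^2=-\mathbbm{1}_4$ are immediate from the definitions since $r(\pi)=-\mathbbm{1}_2$; and the twisted relations $cs=sc^{-1}$, $dq=qd^{-1}$ follow from the sign patterns of $q$ and $s$ together with $r(\theta)^{-1}=r(-\theta)$ — applied blockwise, conjugating by $s$ reverses the sign of the angle in each block of $c$, and conjugating by $q$ reverses the sign of the angle in each block of $d$. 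The equalities $c^{-1}=c^{2a-1}$ and $d^{-1}=d^{2b-1}$ then use that $c$ has order $2a$ and $d$ has order $2b$, which is visible from $c^a=d^b=-\mathbbm{1}_4$.

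There is no real obstacle: the work has all been done upstream, either in establishing the isomorphism \eqref{smallrep} or in Lemma \ref{smallrel}. The only point requiring a moment of care is the interaction between the quotient identification $[1,1]=[-1,-1]$ and the matrix statements involving $q^2$; but since the isomorphism sends both $[1,1]$ and $[-1,-1]$ to $\mathbbm{1}_4$ (and sends $[1,\jj]^2=[1,-1]$ to $-\mathbbm{1}_4$), the relations pass through consistently.
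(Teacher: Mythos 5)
Your proposal is correct and matches the paper's (implicit) argument: the paper states this lemma without proof precisely because the matrices $c,d,q,s$ in \eqref{MatrixRep} are the images of the generators under the representation \eqref{smallrep}, so the relations of Lemma \ref{smallrel} transport directly, with the identification $[1,1]=[-1,-1]$ collapsing to the genuine matrix identity $c^a=d^b=s^2=q^2=-\mathbbm{1}_4$. The only quibble is in your optional direct-verification sketch, where ``analogously for $d$ and $s$'' glosses over the fact that the diagonal blocks of $d$ are $\rr(-\pi/b)$ and $\rr(\pi/b)$ rather than equal, so $ds=sd$ relies on the off-diagonal blocks of $s$ being reflections conjugating $\rr(\psi)$ to $\rr(-\psi)$ — but this does not affect the main argument.
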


The group algebra software GAP \cite{gap2014} allows to check some of the stated results for low group orders. Among other things, GAP provides a classification scheme for small groups. The GAP-identifiers are composed of two integers. The first one is the group order and the second one enumerates the isomorphism classes of groups of the given order. Here we present the identifiers of the first few groups within our classification which are relevant for our subsequent analysis.
\begin{table}[!ht]
\begin{center}
\resizebox{\textwidth}{!}{%
\begin{tabular}{|c|c|c|c|c|c|c|c|c|} \hline
$\m$		& 15 		& 35 		& 39 		& 45 		& 51 		& 55		& \emph{65}	& \emph{65} \\ \hline
$(a,b)$ & (5,3) 	& (5,7) 	& (13,3) 	& (5,9) 	& (17,3) 	& (5,11)	& \emph{(5,13)}	& \emph{(13,5)} \\ \hline
GAP-id. 	& [120, 10]	& [280, 9] 	& [312, 17]	& [360, 9] 	& [408, 9] 	& [440, 19]	& \emph{[520,13]}	& \emph{[520,13]} \\ \hline
\end{tabular}}
\end{center}
\captionsetup{width=.95\textwidth}
\caption{GAP-identifiers of \Hab for small values of $\m$. GAP identifies groups by their order in the first position and an enumeration of the isomorphism classes in the second position. Note the symmetry in $a$ and $b$ in the case $\m = 65$ where both factors are in $\A$. In cases like this we have multiple choices for $a$. Bear in mind that there are more complicated cases in which we have more than two choices for $a$ and $b$ for the same value of $\m$. In these cases, a change of the parameters does not necessarily lead to the same groups. However, the smallest groups for which this occurs are already far beyond the reach of the SmallGroup library from GAP.}
\label{tabsmallgroups}
\end{table}

\subsection[Representation on \texorpdfstring{$\RR^8$}{R8}]{Representation on $\boldsymbol{\RR^8}$}
\label{subsecgroups}
We want to investigate the groups \Gab in a similar manner as the groups \Hab before. First of all we can calculate relations on the generators. The relations on $C,D,Q$ and $S$ are the same as for $c,d,q$ and $s$ because of the blockdiagonal structure. The relations containing $V$ can be calculated using the ones for the small matrices (see Lemma \ref{smallmatrel}).

\begin{lemma}
\label{rel}
The blockdiagonal generators of \Hab are subject to the following relations: $CD = DC, CQ = QC, DS = SD, QS = SQ, Q^4 = \mathbbm{1}_8, S^4 = \mathbbm{1}_8, CS = SC^{-1} = SC^{2a-1}, DQ = QD^{-1} = QD^{2b-1}, C^a = Q^2, D^b = Q^2, S^2 = Q^2$.
Adding $V$ yields
\begin{align*}
VC	&= C^\rho V &\quad
CV	&= VC^{-\rho} = VC^{2a - \rho} &\quad
VD	&= DV \\
VQ	&= Q^3 V &\quad
VS	&= SV &\quad
V^8	&= \mathbbm{1}_8 &\quad
V^2	&= S.
\end{align*}
\end{lemma}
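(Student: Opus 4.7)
The approach is direct computation: the first group of identities reduces to block-by-block verification using Lemma \ref{smallmatrel}, while the identities involving $V$ reduce to small block products together with a number-theoretic refinement of the hypothesis $\rho^2 \equiv -1 \pmod a$.

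For the block-diagonal relations among $C,D,Q,S$, the only subtlety is that the two diagonal blocks of $C$ are $c$ and $c^\rho$ rather than two copies of $c$. Every relation from Lemma \ref{smallmatrel} in which $c$ appears can, however, be raised to the $\rho$-th power: since $\rho$ is odd and $q^2=s^2=-\mathbbm{1}_4$, we have $(c^\rho)^a = (c^a)^\rho = (-\mathbbm{1}_4)^\rho = -\mathbbm{1}_4 = q^2 = (-q)^2$, which is exactly what is needed in the lower right block of $C^a = Q^2$. The relations $CS = SC^{-1}$, $CD = DC$ and $CQ = QC$ are handled the same way, replacing $c$ by $c^\rho$ in the lower block and using the fact that $s^2$ is central so that $sc^\rho = c^{-\rho}s$ follows from $sc = c^{-1}s$. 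The relations not involving $C$ are trivial because both diagonal blocks of $D$, $Q$ (up to a sign that squares away) and $S$ are identical.

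For the relations involving $V$, I would first compute $V^2$ directly by block multiplication; the off-diagonal blocks vanish and the diagonal blocks each give $s$, so $V^2 = S$ and hence $V^8 = S^4 = \mathbbm{1}_8$. The identities $VD = DV$ and $VS = SV$ reduce to $sd = ds$ and to the triviality $s\cdot s = s\cdot s$. For $VQ = Q^3 V$ one computes
\[
VQ = \begin{pmatrix} 0 & -q \\ sq & 0 \end{pmatrix}, \qquad Q^3 V = \begin{pmatrix} 0 & q^3 \\ -q^3 s & 0 \end{pmatrix},
\]
and equality follows from $q^3 = -q$ (a consequence of $q^2 = -\mathbbm{1}_4$) together with $sq = qs$.

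The main obstacle, and the only step that uses more than bookkeeping, is $VC = C^\rho V$. A block computation gives
\[
VC = \begin{pmatrix} 0 & c^\rho \\ sc & 0 \end{pmatrix}, \qquad C^\rho V = \begin{pmatrix} 0 & c^\rho \\ c^{\rho^2}s & 0 \end{pmatrix},
\]
so equality is equivalent to $sc = c^{\rho^2}s$. Since $s^2 = -\mathbbm{1}_4$ is central, the relation $cs = sc^{-1}$ from Lemma \ref{smallmatrel} rearranges to $sc = c^{-1}s$, so I need $c^{\rho^2 + 1} = \mathbbm{1}_4$. The matrix $c$ has order $2a$, so I need $2a \mid \rho^2 + 1$, whereas Proposition \ref{prime} only delivers $a \mid \rho^2 + 1$. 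The upgrade is the one quantitative input of the proof: $\rho$ was chosen odd, so $\rho^2 + 1$ is even; writing $\rho^2 + 1 = ka$ with $a$ odd forces $k$ to be even, hence $\rho^2 + 1 \equiv 0 \pmod{2a}$. The remaining relation $CV = VC^{-\rho}$ is verified by the same block calculation using $\rho^2 \equiv -1 \pmod{2a}$ once more and $sc^{-\rho} = c^\rho s$.
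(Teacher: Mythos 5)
Your proof is correct and takes essentially the same route as the paper, which simply asserts that all relations follow from Lemma~\ref{smallmatrel} by block computation and leaves the verification to the reader. You also correctly isolate the one step that is more than bookkeeping: $VC = C^\rho V$ forces $c^{\rho^2+1} = \mathbbm{1}_4$, i.e.\ $2a \mid \rho^2 + 1$, which is where the oddness of $\rho$ (together with $a$ odd) upgrades Proposition~\ref{prime} from a congruence mod $a$ to one mod $2a$ — a point the paper's surrounding remark only hints at in connection with $C^a = Q^2$.
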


\begin{rem}
This is the point where the fact that $\rho$ is odd becomes important. If it were even $C^a$ would not be equal to $Q^2=-\mathbbm{1}_8$ but 
\[ C^a = \begin{pmatrix}
- \mathbbm{1}_4 & 0 \\
0				& \mathbbm{1}_4
\end{pmatrix}. \]
\end{rem}

\noindent
Just as before this allows us to write every element $g \in \Gab$ in the form
\begin{equation}
\label{groupel}
g = C^{k_1} D^{k_2} Q^{l_1} S^{l_2} V^m
\end{equation}
with $k_1 \in \rkr{a}, k_2 \in \rkr{b}, l_1 \in \rkr{4}, l_2 \in \rkr{2}$ and $m \in \rkr{2}$. Using the calculations for the groups \Gab we may state similar results on the structure and isotropy of the groups \Hab.

\begin{lemma}
For each $a \in \A$ and $b \in 2\NN+1$ with $\gcd(a,b)=1$ the group \Gab forms a subgroup of \OO{8} of order $16\m$ where $\m = a \cdot b$.
\end{lemma}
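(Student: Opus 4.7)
The statement has two parts: the inclusion $\Gab\le\OO{8}$ and the exact order count. Orthogonality of $\Gab$ is quick: each of $C,D,Q,S$ is block-diagonal with blocks in $\SO{4}\subset\OO{4}$, and for $V$ one computes directly, using orthogonality of $s$, that $VV^T=\begin{pmatrix}\mathbbm{1}_4&0\\0&ss^T\end{pmatrix}=\mathbbm{1}_8$. Since the generators are orthogonal, so is the group they generate.

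For the order, my plan is to study the subgroup $N=\langle C,D,Q,S\rangle$ and then show $\Gab$ splits into exactly two cosets of $N$. First, the restrictions of the relations in \autoref{rel} to $C,D,Q,S$ are formally identical to the relations of \autoref{smallmatrel} for $c,d,q,s$. Hence the normal form argument that led to \eqref{smallgroupel} applies verbatim inside $N$ and yields $|N|\le 8\m$. For the matching lower bound, I would use that every element of $N$ is block-diagonal (since it is a product of block-diagonal generators), so the projection onto the upper-left $4\times 4$ block defines a group homomorphism $\pi_1\colon N\to\SO{4}$. Its image is $\langle c,d,q,s\rangle$, i.e.\ the matrix form of $\Hab$ from \eqref{MatrixRep}, which has order $8\m$ by \autoref{thmsmallgroups}(1). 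Therefore $|N|\ge 8\m$, and combined with the upper bound, $|N|=8\m$.

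It remains to see $[\Gab:N]=2$. The relations $VC=C^\rho V$, $VD=DV$, $VQ=Q^3V$, $VS=SV$ together with $V^2=S\in N$ allow any word in the generators of $\Gab$ to be rewritten so that at most a single factor $V$ appears, on the right; hence $\Gab=N\cup NV$. To rule out $V\in N$, observe that all elements of $N$ are block-diagonal whereas $V$ has nonzero off-diagonal blocks, so these two cosets are distinct. Consequently $|\Gab|=2\cdot 8\m=16\m$.

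The only genuinely delicate point is confirming that $\pi_1(N)$ equals the full $\Hab$ and not a proper subgroup. This is where the earlier identity $\tilde H=\Hab$ (derived from $\rho^2\equiv -1\bmod a$ and $[\jj,1]^2[1,\jj]^3=[1,\jj]$) is used implicitly: it guarantees that the normal form for $\Hab$ transfers cleanly to $N$ despite the twist by $\rho$ in the second block of $C$ and the sign in the second block of $Q$. Once this is noted, the rest of the argument is a coset count.
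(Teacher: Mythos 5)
Your proof is correct and follows essentially the same route as the paper: identify $\langle C,D,Q,S\rangle$ with $\Hab$ (of order $8\m$) and observe that adjoining $V$ produces exactly two cosets, doubling the order. The projection-onto-the-first-block homomorphism is a clean way to make the lower bound $|N|\ge 8\m$ explicit, but it is a detail the paper leaves implicit rather than a different argument.
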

\begin{proof}
The elements $C,D,Q$ and $S$ generate a group that is isomorphic to \Hab. Addition of $V$ to the set of generators gives two copies of this group. Therefore $ \left| \Gab \right| = 2 \left| \Hab \right| = 16\m$. Once again this can be calculated directly from the form of the group elements \eqref{groupel}.
\end{proof}

\begin{lemma}
\label{lemsubg}
Let $b<b'$ where $b'$ is odd and relatively prime to $a$. If $b$ divides $b'$, \Gab is a subgroup of $G_{a,b'}$, i.e.
\[ \Gab \le G_{a,b'}. \]
\end{lemma}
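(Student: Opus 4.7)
The plan is to mirror the strategy used for the $4$-dimensional analogue in Lemma \ref{lemsmallsubg}: since both $\Gab$ and $G_{a,b'}$ are given explicitly in terms of the generating set $\{C,D,Q,S,V\}$, it suffices to show that every generator of $\Gab$ belongs to $G_{a,b'}$.

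First I would observe that the generators $C(a)$, $Q$, $S$, and $V$ do not depend on $b$ at all, so these four matrices appear in both groups and automatically lie in $G_{a,b'}$. The only generator that depends on $b$ is $D=D(b)$, which has the block form
\[ D(b) = \begin{pmatrix} d & 0 \\ 0 & d \end{pmatrix}, \qquad d = \begin{pmatrix} \rr(-\pi/b) & 0 \\ 0 & \rr(\pi/b) \end{pmatrix}, \]
so the entire task reduces to producing $D(b)$ as a word in the generators of $G_{a,b'}$.

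For this step I would use the hypothesis $b\mid b'$ and write $b'=bq$ with $q\in\NN$. A direct block-matrix computation (or equivalently, the identity $e_{b'}^{q}=e^{\pi\ii q/b'}=e^{\pi\ii/b}=e_b$ on the level of quaternions, using the correspondence $D\leftrightarrow[1,e_b]$ from \eqref{MatrixRep}) shows that raising $D(b')$ to the $q$-th power multiplies all rotation angles by $q$, so that
\[ D(b')^q = D(b). \]
Hence $D(b)\in G_{a,b'}$, and combined with the previous paragraph this gives $\Gab\le G_{a,b'}$.

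There is essentially no obstacle here; the argument is the $\RR^8$-analogue of Lemma \ref{lemsmallsubg} and the only thing to verify is the trivial identity $D(b')^q=D(b)$. The reason nothing further is required is precisely the block-diagonal construction of the generators: the parameter $b$ is localised inside the single matrix $D$, so the inclusion question reduces to a one-generator divisibility statement already settled in the $4$-dimensional case.
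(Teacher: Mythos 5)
Your proposal is correct and follows essentially the same route as the paper: the paper simply cites the second statement of Theorem \ref{thmsmallgroups} (equivalently Lemma \ref{lemsmallsubg}), observing that the only $b$-dependent generator is $D(b)$ and that the $4$-dimensional divisibility argument $e_{b'}^q = e_b$ transfers directly to the block matrix $D(b')^q = D(b)$. Your write-up just makes that reduction explicit.
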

\begin{proof}
This follows directly from the second statement in Theorem \ref{thmsmallgroups}.
\end{proof}

\begin{rem}
In contrast to the $4$-dimensional case, we do not obtain the same result for the parameter $a$, which is due to the power $\rho$ in the definition of the generating element $C$. This power is not necessarily the same for $a$ and $a'$ when $a$ divides $a'$. However in this case \Gab is isomorphic to a subgroup of $G_{a',b}$.
\end{rem}

\begin{lemma}
The natural $8$-dimensional representation of \Gab is absolutely irreducible.
\end{lemma}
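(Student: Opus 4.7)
The plan is to determine the real commutant of \Gab inside $\RR^{8\times 8}$ and show that it reduces to the scalars $\RR\cdot\mathbbm{1}_8$; absolute irreducibility then follows from the Schur-type characterization of Lemma~6.2 in \autocite{lauterbach2010do}.

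Let $M \in \RR^{8\times 8}$ commute with every element of \Gab and write it in $4\times 4$ block form
\[ M = \begin{pmatrix} M_{11} & M_{12} \\ M_{21} & M_{22} \end{pmatrix}. \]
Since $D$ and $S$ are block-diagonal with equal blocks $d$ and $s$, while $Q = \diag{q,-q}$ and $C = \diag{c, c^\rho}$, expanding $MD=DM$, $MS=SM$, $MQ=QM$, $MC=CM$ blockwise yields: $M_{11}$ commutes with $c, d, q, s$; $M_{22}$ commutes with $c^\rho, d, q, s$; both $M_{12}$ and $M_{21}$ commute with $d, s$ and anticommute with $q$; and $c M_{12} = M_{12} c^\rho$, $c^\rho M_{21} = M_{21} c$. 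Expanding $MV = VM$ blockwise then forces $M_{22} = M_{11}$ and $M_{21} = M_{12} s$, the remaining two block identities being automatic from what has already been collected.

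Since $c, d, q, s$ generate \Hab, the absolute irreducibility of its $4$-dimensional representation (established in the preceding subsection) forces $M_{11} = \alpha \mathbbm{1}_4$ for some $\alpha \in \RR$, and the $V$-compatibility gives $M_{22} = \alpha \mathbbm{1}_4$ as well. What remains is to show $M_{12} = 0$, for then $M_{21} = M_{12} s = 0$ and $M = \alpha \mathbbm{1}_8$ follow.

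The main obstacle lies in this off-diagonal vanishing. The plan is to reinterpret the twisted relation $c M_{12} = M_{12} c^\rho$ as saying that $M_{12}$ is an intertwiner between two real representations of the cyclic group $\langle c \rangle \cong \rkr{2a}$ on $\RR^4$: on the source the generator acts as $c^\rho$, on the target as $c$. Since $c = \diag{\rr(\pi/a), \rr(\pi/a)}$, both representations split over $\RR$ as two copies of a single $2$-dimensional rotation representation, the source by the angle $\rho\pi/a$ and the target by the angle $\pi/a$. Two such real rotation representations are isomorphic precisely when $\rho \equiv \pm 1 \pmod{2a}$. The hypothesis $\rho^2 \equiv -1 \pmod{a}$ rules this out: $\rho \equiv \pm 1 \pmod{a}$ would force $\rho^2 \equiv 1 \pmod{a}$ and hence $a \mid 2$, contradicting $a \geq 5$. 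The source and target thus share no $\langle c\rangle$-irreducible constituent, the intertwining space vanishes, and $M_{12} = 0$ as required.
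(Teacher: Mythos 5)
Your proof is correct, and its overall skeleton matches the paper's: the same $4\times4$ block decomposition, the same harvesting of blockwise commutation relations from $C,D,Q,S,V$, the same appeal to absolute irreducibility of the $4$-dimensional representation of \Hab for the diagonal blocks, and the same use of $V$ to identify the two resulting scalars. Where you genuinely diverge is in the crucial step $M_{12}=0$. The paper first uses the anticommutation of $L_{1,2}$ with $qs$ to reduce $L_{1,2}$ to a checkerboard pattern and then kills the remaining entries by an entrywise computation based on $\cos(\rho\pi/a)\ne\cos(\pi/a)$ and $\sin(\rho\pi/a)\ne\sin(\pi/a)$ — a computation whose details it explicitly omits. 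You instead use only the single twisted relation $cM_{12}=M_{12}c^\rho$, read it as an intertwiner between $V_\rho^{\oplus 2}$ and $V_1^{\oplus 2}$ for the cyclic group $\langle c\rangle\cong\rkr{2a}$, and invoke Schur's lemma for the non-isomorphic real rotation irreducibles $V_1\not\cong V_\rho$ (which is exactly where the hypothesis $\rho^2\equiv -1 \bmod a$ enters, just as it does in the paper's cosine inequalities). Your route is more conceptual, needs fewer of the generators, and actually supplies a complete argument for the step the paper leaves to the reader; the paper's route is more elementary but less self-contained at precisely that point. Both hinge on the same number-theoretic fact, namely $\rho\not\equiv\pm1\pmod{2a}$.
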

\begin{proof}
Let $L \colon \RR^8 \to \RR^8$ be a linear map in matrix representation that commutes with the group action of \Gab. We write $L$ in form of a block matrix
\[ L = \begin{pmatrix}
L_{1,1}	& L_{1,2} \\
L_{2,1}	& L_{2,2}
\end{pmatrix} \]
where $L_{i,j} \colon \RR^4 \to \RR^4$ for $i,j \in \lbrace 1,2 \rbrace$. In a first step we want to show that $L_{1,2}=L_{2,1}=0$. Then we can use absolute irreducibility of the $4$-dimensional representation of \Hab to prove the claim. Using  the commutativity assumption and the structure of the generating matrices we obtain
\begin{equation*}
\begin{alignedat}{6}
L_{1,2}qs		&=& &-qsL_{1,2} &	&\quad \text{from} \quad& &LQ& &=& &QL \text{ and } LS = SL, \\
L_{1,2}c^\rho	&=& &cL_{1,2} &	&\quad \text{from} \quad& &LC& &=& &CL, \\
L_{1,2}s 		&=& &L_{2,1}&		&\quad \text{from} \quad& &LV& &=& &VL.
\end{alignedat}
\end{equation*}
The first relation yields that $L_{1,2}$ is of the form
\[ L_{1,2} = \begin{pmatrix}
0 & * & 0 & * \\
* & 0 & * & 0 \\
0 & * & 0 & * \\
* & 0 & * & 0
\end{pmatrix}. \]
We want to apply the second relation and remember that $c$ is the representation matrix of $[e_a,1]$ on $\HH$. Thus we calculate the power of $c$ to be
\[ c^\rho = 
\begin{pmatrix}
\rr\left( \frac{\rho \pi}{a} \right)	& 0 \\
0										& \rr\left( \frac{\rho \pi}{a} \right)
\end{pmatrix}
. \]
Note that the entries of this matrix contain the real and imaginary part of $e_a^\rho$:
\begin{equation*}
\cos \left(\frac{\rho \pi}{a}\right) = \Re \left(e_a^\rho\right), \quad
\sin \left(\frac{\rho \pi}{a}\right) = \Im \left(e_a^\rho\right).
\end{equation*}
Now we make use of the special choice of the power $\rho$ to prove that these can not match the real and imaginary part of $e_a$. Since $\rho \in \lbrace 0, \ldots, a-1 \rbrace$, we obtain that the only chance for $\Re \left(e_a^\rho\right) = \Re \left(e_a\right)$ is for $\rho = 1$ or $\rho = 2a-1$ which both contradict the fact that $\rho^2 = -1 \mod a$. Considering the imaginary parts, we obtain that the only possibility for $\Im \left(e_a^\rho\right) = \Im \left(e_a\right)$ is if $\rho = 1$ or $\rho = a-1$. Once again this contradicts the choice of $\rho$. Therefore 
\begin{equation*}
\cos \left(\frac{\rho \pi}{a}\right) \ne \cos \left(\frac{\pi}{a}\right) \quad \text{and} \quad
\sin \left(\frac{\rho \pi}{a}\right) \ne \sin \left(\frac{\pi}{a}\right).
\end{equation*}
Omitting the details, this allows us to compute that the remaining entries of $L_{1,2}$ are zero as well. Together with the last relation this yields $L_{1,2}=L_{2,1}=0$.

Therefore we obtain two linear maps $L_{i,i} \colon \RR^4 \to \RR^4$ for $i=1,2$ that commute with the action of \Hab. From the absolute irreducibility of this action we know that $L$ is of the form
\[ L = \begin{pmatrix}
\gamma \mathbbm{1}_4 	& 0 \\
0						& \delta \mathbbm{1}_4
\end{pmatrix} \]
with $\gamma, \delta \in \RR$. Commutation with $V$ yields $\gamma = \delta$.
\end{proof}

In the next step we investigate the isotropy of the $8$-dimensional representation of \Gab. Note that the corresponding results on \Hab mostly rely on the relations of the generating elements. Hence they can be adapted almost directly.

\begin{lemma}
Let $g \in \Gab \setminus \lbrace \mathbbm{1}_8 \rbrace$ be written in the form \eqref{groupel}. Then $g$ fixes a point $x \in \RR^8 \setminus \lbrace 0 \rbrace$ if and only if $l_1 \in \lbrace 1,3 \rbrace, l_2 = 1$ and $m=0$.
\end{lemma}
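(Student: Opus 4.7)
My plan is to split the argument into the two cases $m = 0$ and $m = 1$, reducing each to the $4$-dimensional analysis already carried out in \autoref{smalliso1} and \autoref{smalliso2}.

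In the case $m = 0$, the element $g$ is block diagonal. The upper block is $h_1 = c^{k_1} d^{k_2} q^{l_1} s^{l_2}$, and after absorbing the sign $(-1)^{l_1}$ from the lower corner of $Q^{l_1}$ into the $q$-exponent via $q^2 = -\mathbbm{1}_4$, the lower block is $h_2 = c^{\rho k_1} d^{k_2} q^{3l_1} s^{l_2}$. Both are elements of the $4$-dimensional representation of $\Hab$, and $\fix{g} = \fix{h_1} \oplus \fix{h_2}$. By \autoref{smalliso1} and \autoref{smalliso2} each block has a nontrivial fixed point iff its $q$-exponent lies in $\{1,3\}$ and its $s$-exponent equals $1$. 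Since $l_1 \mapsto 3l_1$ permutes $\{1,3\}$ modulo $4$, both conditions reduce to $l_1 \in \{1,3\}$ and $l_2 = 1$, and since $h_1 = \mathbbm{1}_4$ already forces $g = \mathbbm{1}_8$, this settles the case $m = 0$.

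For $m = 1$, the plan is to compute $g^2$ in the normal form \eqref{groupel} and feed it back into the $m=0$ case. Using the relations $VC = C^\rho V$, $VD = DV$, $VQ = Q^3 V$, $VS = SV$ from \autoref{rel} to commute the inner copy of $V$ past the generator block, and then $SC = C^{-1}S$, $QD = D^{-1}Q$, $QS = SQ$ together with $V^2 = S$ and $Q^4 = \mathbbm{1}_8$ to collect powers, I expect the reduction to yield
\[
g^2 = C^{k_1(1 + (-1)^{l_2}\rho)}\, D^{k_2(1 + (-1)^{l_1})}\, Q^{2l_2}\, S.
\]
The essential feature is that the $Q$-exponent is $2l_2 \in \{0,2\}$ and the $S$-exponent is $1$, so by the case $m = 0$ this element has only the trivial fixed point unless it equals $\mathbbm{1}_8$. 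But $Q^{2l_2}S = \mathbbm{1}_8$ would force $S = \mathbbm{1}_8$ when $l_2 = 0$, and $S = Q^{-2} = -\mathbbm{1}_8$ when $l_2 = 1$ (using $Q^2 = -\mathbbm{1}_8$), both plainly false. Hence $\fix{g} \subseteq \fix{g^2} = \{0\}$, completing this case.

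The main obstacle I anticipate is the bookkeeping in the reduction of $g^2$ to normal form; once that is in hand, the key qualitative observation is that the $Q$-exponent of $g^2$ is automatically even whenever $m = 1$, which is precisely what excludes $g^2$ from the list of elements with nontrivial fix obtained in the case $m = 0$.
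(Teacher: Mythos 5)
Your proof is correct and follows essentially the same route as the paper: for $m=0$ both reduce to the $4$-dimensional Lemmas \ref{smalliso1} and \ref{smalliso2} via the block decomposition, and for $m=1$ your computation of $g^2$ in normal form (with even $Q$-exponent and $S$-exponent $1$) together with $\fix{g}\subseteq\fix{g^2}$ is exactly the paper's step of substituting one block equation into the other, which produces the same product $c^{k_1+(-1)^{l_2}\rho k_1}d^{k_2+(-1)^{l_1}k_2}q^{2l_2}s$. Your explicit check that $g^2\ne\mathbbm{1}_8$ (via the $S$-exponent) is a small point the paper glosses over, but it is a welcome addition rather than a deviation.
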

\begin{proof}
For $m=0$ the claim follows directly from Lemmas \ref{smalliso1} and \ref{smalliso2} since the other elements keep the two \Hab-blocks intact. Therefore we consider elements of the form \eqref{groupel} with $m=1$:
\[ g = C^{k_1} D^{k_2} Q^{l_1} S^{l_2} V. \]
Suppose $x = (\zeta, \eta)$ with $\zeta,\eta \in \RR^4$ such that $gx = x$. Using the structure of the generating matrices, this yields
\[ gx = C^{k_1} D^{k_2} Q^{l_1} S^{l_2} \left(\eta, s\zeta \right) = (\zeta, \eta). \]
Since $C,D,S$ and $Q$ keep the block structure intact, we may split this into two equations:
\begin{align*}
c^{k_1} d^{k_2} q^{l_1} s^{l_2} \eta 				&= \zeta \\
c^{\rho k_1} d^{k_2} q^{3 l_1} s^{l_2 + 1} \zeta	&= \eta.
\end{align*}
Inserting the second equation in the first one, we obtain
\[ \left(c^{k_1} d^{k_2} q^{l_1} s^{l_2} \right) \left( c^{\rho k_1} d^{k_2} q^{3 l_1} s^{l_2 + 1} \right) \zeta = \zeta. \]
Using the relations on the matrix representation of the generators of \Hab (Lemma \ref{smallmatrel}), we may then calculate
\[ c^{k_1} d^{k_2} q^{l_1} s^{l_2} c^{\rho k_1} d^{k_2} q^{3 l_1} s^{l_2 + 1} = c^{k_1+ (-1)^{l_2}\rho k_1} d^{k_2 + (-1)^{l_1} k_2} q^{2 l_2} s. \]
Since the power of $q$ is even, Lemma \ref{smalliso2} yields $\zeta = 0$. Inserting this in the second equation gives $\eta = 0$ which completes the proof.
\end{proof}

\begin{rem}
Note that we can use the formulas to compute basis elements of the fixed point spaces (\autoref{smalliso1}) in the case $m=0$ as well. We only have to take the powers of $c$ and $q$ in the second block of the matrices $C$ and $Q$ into account. The fixed point spaces are obviously $4$-dimensional.
\end{rem}

\noindent
Concerning the isotropy subgroups of \Gab we obtain the same result as in \autoref{smalliso3} from the fact that $\langle C,D,Q,S \rangle$ is isomorphic to \Hab:

\begin{lemma}
The nontrivial isotropy subgroups of \Gab are generated by precisely one group element.
\end{lemma}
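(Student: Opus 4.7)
My plan is to reduce the statement to the analogous result for \Hab (\autoref{smalliso3}) by combining two facts already established: (i) every element of \Gab with a nontrivial fixed point lies in the subgroup $\langle C,D,Q,S\rangle$ (which is the content of the preceding lemma, via the condition $m=0$), and (ii) this subgroup is isomorphic to \Hab, with each diagonal block giving a faithful representation of \Hab on $\RR^4$.

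First I would observe that under the projection onto the first diagonal block, the generators act as $c,d,q,s$, which gives an isomorphism $\pi_1 \colon \langle C,D,Q,S\rangle \to \Hab$. Under the projection onto the second block they act as $c^{\rho},d,-q,s = c^{\rho},d,q^{3},s$, and by the argument used in the construction of $\tilde{H}$ (equation \eqref{smallmatrep}) these generate all of \Hab as well, so we obtain a second isomorphism $\pi_2$. Thus the action of $\langle C,D,Q,S\rangle$ on $\RR^{8}=\RR^{4}\oplus\RR^{4}$ is the direct sum of two faithful \Hab-representations.

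Next, for any nonzero $x=(\zeta,\eta)\in\RR^{8}$, the preceding lemma shows that the isotropy subgroup $\Sigma_{x} \le \Gab$ is contained in $\langle C,D,Q,S\rangle$. Since $x\ne 0$, at least one of $\zeta,\eta$ is nonzero; say $\zeta\ne 0$ (the other case is symmetric using $\pi_2$). Then $\pi_{1}(\Sigma_{x})$ is contained in the \Hab-isotropy of $\zeta$, which by Lemmas \ref{smalliso1} and \ref{smalliso3} is either trivial or cyclic of order $2$. Since $\pi_{1}$ is an isomorphism, $\Sigma_{x}$ is itself trivial or cyclic of order $2$; in the nontrivial case it is therefore generated by a single element, as claimed.

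There is no real obstacle beyond checking that the second block genuinely gives a faithful \Hab-action, and this is immediate from the identity $-q=q^{3}$ (so that the relations of \autoref{smallmatrel} are preserved) together with the earlier observation that $\bigl\langle c^{\rho},d,q^{3},s\bigr\rangle=\Hab$. Everything else is a direct transcription of the $4$-dimensional arguments through the block decomposition.
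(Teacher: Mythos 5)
Your argument is correct and follows essentially the paper's route: the paper likewise reduces to the $4$-dimensional case by noting that every element with a nontrivial fixed point has $m=0$, hence lies in $\langle C,D,Q,S\rangle\cong\Hab$, and then imports Lemma~\ref{smalliso3}. Your way of making the transfer explicit --- bounding the stabilizer of $(\zeta,\eta)$ by injecting it, via the block projection, into the $\Hab$-stabilizer of a nonzero component, which has order at most $2$ by Lemmas~\ref{smalliso1} and~\ref{smalliso3} --- is a clean and fully adequate implementation of that reduction.
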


Once more we want to make use of shorter notations. We therefore write
\begin{align*}
K	&= \langle g \rangle = \left\langle C^{k_1} D^{k_2} Q S \right\rangle \\
K'	&= \langle g' \rangle = \left\langle C^{k_1} D^{k_2} Q^3 S \right\rangle
\end{align*}
for the two types of nontrivial isotropy subgroups for the rest of this subsection. Using the element $V$, we may now show, that in the $8$-dimensional case we obtain only one isotropy type:

\begin{lemma}
All nontrivial isotropy subgroups of \Gab are conjugate to $\langle QS \rangle$.
\end{lemma}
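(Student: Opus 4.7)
\medskip

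The plan is to reduce the claim to the analogous statement in the four-dimensional case (\autoref{smalliso4}) and then use the extra generator $V$ to collapse the two isotropy types found there into one.

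First I would observe that the subgroup $\langle C, D, Q, S\rangle \le \Gab$ is isomorphic to \Hab via the block-diagonal matrix representation in \eqref{MatrixRep}, and the relations among $C, D, Q, S$ listed in \autoref{rel} are identical to those among the matrix generators of \Hab. Therefore the exact computation used in the proof of \autoref{smalliso4} carries over verbatim. For $K = \langle C^{k_1} D^{k_2} QS \rangle$ we obtain
\[
C^{k_1} D^{k_2} Q S = \widetilde{G}\left( Q^2 \right)^{k_1+k_2} Q S \widetilde{G}^{-1}
\]
with $\widetilde{G} = C^{k_1 \frac{a+1}{2}} D^{k_2 \frac{b+1}{2}}$, so $K$ is conjugate in $\langle C,D,Q,S\rangle$ to $\langle QS\rangle$ if $k_1 + k_2$ is even and to $\langle Q^3 S \rangle$ if $k_1 + k_2$ is odd. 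The same argument applies to $K'$, with the parities swapped.

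Next I would use the conjugation action of $V$ to merge these two classes. From \autoref{rel} we have $VQ = Q^3 V$ and $VS = SV$, which gives
\[
V (QS) V^{-1} = Q^3 S V V^{-1} = Q^3 S.
\]
Thus $\langle Q^3 S\rangle = V \langle QS \rangle V^{-1}$, so both $\langle QS \rangle$ and $\langle Q^3 S\rangle$ are conjugate in \Gab to $\langle QS\rangle$. Combined with the previous paragraph, every nontrivial isotropy subgroup $K$ or $K'$ is conjugate in \Gab to $\langle QS\rangle$, which is the desired conclusion.

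I do not anticipate a real obstacle here: the calculation in the $\langle C,D,Q,S\rangle$-subgroup is a direct transcription of the 4-dimensional argument, and the crucial new ingredient is the single commutation relation $VQ = Q^3 V$, which is already established in \autoref{rel}. The only mild subtlety to double-check is that $V$ itself has trivial isotropy (guaranteed by the previous lemma, which forces $m=0$ for any group element fixing a nonzero vector), so conjugating an isotropy subgroup by $V$ again yields an isotropy subgroup of the same type, and nothing escapes the classification.
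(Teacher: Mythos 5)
Your proposal is correct and follows essentially the same route as the paper: reduce to the four-dimensional computation of \autoref{smalliso4} via the identical relations on $C,D,Q,S$, obtaining the two classes $\langle QS\rangle$ and $\langle Q^3S\rangle$, and then fuse them using $VQSV^{-1}=Q^3S$. The closing remark about $V$ having trivial isotropy is unnecessary (conjugation by any group element sends isotropy subgroups to isotropy subgroups), but it does no harm.
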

\begin{proof}
All nontrivial isotropy subgroups are generated by either $g$ or $g'$ which both do not contain a factor $V$. We may therefore use Lemma \ref{smalliso4} and the fact that $C,D,Q$ and $S$ are subject to the same relations as $[e_a,1], [1,e_b], [1, \jj]$ and $[\jj, 1]$. This yields that every nontrivial isotropy subgroup is conjugate to either $\langle QS \rangle$ or $\langle Q^3S \rangle$. These two subgroups are conjugate by $V$:
\[ VQSV^{-1} = Q^3SVV^{-1} = Q^3S. \]
Thus all nontrivial isotropy subgroups are conjugate to $\langle QS \rangle$.
\end{proof}

\noindent
This completes the proof for \autoref{thmgroups}. Similarly to the $4$-dimensional case these considerations leave the results on the Lie group structure straightforward and we may state the proof of \autoref{thmgroups2}:

\begin{proof}[Proof of \autoref{thmgroups2}]
The claim follows in the same way as in the proof for the groups \Ha. Let $\lbrace \xi_1, \ldots, \xi_8 \rbrace$ be the standard basis of $\RR^8$. The Lie group \Ga contains the elements $C$ and $D\left(\pi/a\right)$ and their product $C D\left(\pi/a\right)$ fixes the subspace $\left\langle \xi_1, \xi_2 \right\rangle$. Furthermore $QS$ fixes the subspace $\left\langle \xi_1, \xi_3, \xi_6, \xi_8 \right\rangle$. Thus the subgroup $\left\langle C D\left(\pi/a\right), QS \right\rangle$ generated by these two elements fixes the subspace $\left\langle \xi_1 \right\rangle$.
\end{proof}

\begin{rem}
As mentioned before the subgroup relation for the one-dimensional Lie groups as in \autoref{thmsmallgroups2} does not hold because of the exponent $\rho$ in the construction of the matrix $C$. Furthermore we do not obtain Lie groups of dimension $2$ when considering the closure of the union of the one-dimensional Lie groups \Ga over all $a \in \A$. The reason for this structural difference lies in the power $\rho$ as well. It is a nonconstant natural number depending on the angle which is a rational multiple of $\pi$. As such it has no smooth -- more precisely, not even a continuous -- continuation on all angles $\phi \in S^1$ and therefore prevents a smooth structure on the matrices $C$ for all angles. It is unknown whether there exist $2$-dimensional Lie groups containing all the groups $\Ga$.
\end{rem}

\noindent
We provide the GAP-identifiers for the first groups \Gab (compare to Table \ref{tabsmallgroups}):
\begin{table}[!ht]
\begin{center}
\resizebox{\textwidth}{!}{%
\begin{tabular}{|c|c|c|c|c|c|c|c|c|} \hline
$\m$ & 15 & 35 & 39 & 45 & 51 & 55 & \emph{65} & \emph{65} \\ \hline
$(a,b)$ & (5,3) & (5,7) & (13,3) & (5,9) & (17,3) & (5,11) & \emph{(5,13)} & \emph{(13,5)} \\ \hline
GAP-id. & [240, 101] & [560, 94] & [624, 130] & [720, 98] & [816, 97] & [880, 130] & \emph{[1040,105]} & \emph{[1040,112]} \\ \hline
\end{tabular}}
\end{center}
\captionsetup{width=.95\textwidth}
\caption{GAP-identifiers of \Gab for small values of $\m$. Note that for the groups \Gab the symmetry in $a,b$ is broken in the case of both factors being in $\A$. This is due to the different construction of the matrix $C$ from $c$.}
\label{tabgroups}
\end{table}


\section{Equivariant structure for \texorpdfstring{$\boldsymbol{\GG_a}$}{Ga}}
\label{ssma_es}
Since we are interested in bifurcation problems on $\RR^8$ with \Gab -symmetry for suitable $a$ and $b$, we have to investigate smooth \Gab -equivariant maps on $\RR^8$. Using methods from character and invariant theory, we are able to compute dimensions of spaces of \emph{homogeneous equivariant polynomial maps} for up to third degree. Then we determine the generating functions for the corresponding spaces. This allows us to gain insight in the general bifurcation behaviour of equations with \Gab -symmetry which we will investigate further in the next section.

For a group $\Gamma$ acting on the real space $W$ define its \emph{character} as follows
\[ \chi \colon \Gamma \to \RR, \quad g \mapsto \tr{g} \]
and denote the space of smooth $\Gamma$-equivariant maps by $C_\Gamma^\infty \left( W, W \right)$. It is well known that the symmetric functions form a module which contains the equivariant polynomials as a dense subset (see for example \autocite{chossat2000methods} or \autocite{field2007dynamics}). The space of homogeneous equivariant polynomial maps of degree $d$ shall be denoted by $P_\Gamma^d (W,W)$. To gather information about the equivariant structure of a given representation one often looks at the so called \emph{Molien series}, a formal power series that carries information about dimensions of these spaces. It is defined as follows
\[ \sum_{d = 0}^{\infty} R_d z^d\]
where $R_d = \dim P_\Gamma^d (W,W)$ is the number of linearly independent equivariant polynomial maps of degree $d$ to which we refer as \emph{Molien coefficients}. In a similar way we consider invariant polynomials from the represention space into the real numbers. These are in a close relationship to the equivariant polynomial maps. We denote the space of invariant homogeneous polynomials of degree $d$ by $\Pi_\Gamma^d (W)$. Then we obtain for example that for every $p\in \Pi_\Gamma^d(W)$ the gradient $\nabla p$ is an equivariant polynomial map: $\nabla p \in P_{\Gamma}^{d-1}\left(W,W\right)$. For more details on this matter and the connection between invariant and equivariant polynomials see \autocite{chossat2000methods}. The corresponding formal power series
\[ \sum_{d = 0}^{\infty} r_d z^d\]
with $r_d = \dim \Pi_\Gamma^d(W)$ is called Molien series as well. Molien's theorem states a way to calculate these formal power series but it is often difficult to do so. That is why we use a slightly different approach.

\subsection{Computation of Molien coefficients}
\label{secsatt}
We are especially interested in the equivariant structure for low degree polynomial maps. \autocite{sattinger1979group} proves a formula by which we can calculate the $R_d$ for a single $d$ without having to deal with the Molien series. This formula also follows from the results in \autocite{zhilinskii1989tensor}. Although it is impractical for large values of $d$ it is very helpful in the cases which we consider. For $g\in \Gamma$ \citeauthor{sattinger1979group} defines the quantity
\[ \chi_{(d)}(g) = \sum_{\sum_{k=1}^{d} k\cdot i_k = d} \frac{\chi^{i_1}(g) \cdots \chi^{i_d} \left( g^d \right)}{1^{i_1}i_1 ! 2^{i_2}i_2 ! \cdots d^{i_d} i_d !} \]
and obtains
\begin{equation}
\label{eqsat2}
R_d = \int_{\Gamma} \chi_{(d)}(g) \chi(g) dg.
\end{equation}
Note that in the case of a finite group the integral becomes a normed sum. To compute single Molien coefficients $r_d$ for the invariant polynomials there exists a similar formula that can easily be derived from the calculations in \autocite{zhilinskii1989tensor}:
\[ r_d = \int_{\Gamma} \chi_{(d)}(g) dg .\]

For the bifurcation analysis we are only interested in the equivariant structure and we will see later that we only need the data for degrees up to $d=3$ (see Section \ref{ssma_mr}). For these cases we can apply formula \eqref{eqsat2} with reasonable effort. In the case of an absolutely irreducible representation, the only linear maps commuting with the group action are multiples of the identity, therefore we immediately obtain $R_1 = 1$. Furthermore $\chi_{(2)}$ reads
\[ \chi_{(2)} (g) = \frac{1}{2} \left( \chi(g^2) + \chi^2 (g)\right). \]
To calculate $\chi_{(3)}$ using $ i_1 + 2i_2 + 3i_3 = 3$ we have the choices $(3,0,0), (1,1,0)$ and $(0,0,1)$ for $(i_1, i_2, i_3)$. Therefore we get
\[ \chi_{(3)} (g) = \frac{1}{3!}\chi^3(g) + \frac{1}{2} \chi(g)\chi(g^2) + \frac{1}{3} \chi(g^3). \]

We want to use formula \eqref{eqsat2} to calculate $R_2$ and $R_3$ for the groups \Gab. Let $a \in \A$ and $b \in 2\NN+1$ with $\gcd(a,b)=1$ be natural numbers as before. In a first step we investigate the character $\chi: \Gab \to \RR$ for an arbitrary element $g \in \Gab$. It is very useful to notice that the character is a class function, i.e. it is invariant under conjugation. We have seen that $g$ can be written in the form
\[ g = C^{k_1} D^{k_2} Q^{l_1} S^{l_2} V^m \]
with $k_1 \in \rkr{a}, k_2 \in \rkr{b}, l_1 \in \rkr{4}, l_2 \in \rkr{2}$ and $m \in \rkr{2}$ (see \eqref{groupel}). Recall how \Gab is constructed from \Hab and note that $C, D, Q$ and $S$ keep the two \Hab -blocks intact. This yields that every element $g$ with $m=1$ is of the form
\[ g = \begin{pmatrix}
0	& * \\
*	& 0
\end{pmatrix} \]
and therefore $\chi (g) = 0$. Hence we may restrict to the case $m=0$:
\[ g = C^{k_1} D^{k_2} Q^{l_1} S^{l_2}. \]
These elements are of the form
\begin{equation}
\label{bigblock}
g = \begin{pmatrix}
h	& 0 \\
0	& h'
\end{pmatrix}
\end{equation}
with $h, h' \in \Hab$ (in matrix representation) and for their character we obtain
\[ \chi (g) = \chi_4 (h) + \chi_4 (h') \]
where $\chi_4 \colon \Hab \to \RR$ denotes the character of the $4$-dimensional representation of \Hab. Investigating this character provides us with the needed result. To do so we make use of both the biquaternionic and the matrix representation of \Hab.

Similar to the form of the group element $g \in \Gab$ we may characterize
\[ h = c^{k_1} d^{k_2} q^{l_1} s^{l_2} \]
for $h \in \Hab$ (see \eqref{smallgroupel}). In a similar manner as before we obtain
\[ h = \begin{pmatrix}
0	& * \\
*	& 0
\end{pmatrix} \]
if $l_1 + l_2$ is odd and therefore $\chi_4 (h) = 0$ in this case. Consider $l_1 = 3$ and $l_2 = 1$. In Lemma \ref{smalliso4} we have seen that elements of this form are conjugate to either $qs = \diag{(1,-1,1,-1)}$ or $-qs$ and therefore $\chi_4 (h) = 0$. For $l_1 = 1$ and $l_2=1$ we have
\[ h = c^{k_1} d^{k_2} q s = - c^{k_1} d^{k_2} q^{3} s \]
and by linearity of the character $\chi_4 (h) = 0$ as well. The remaining two cases are $l_1 \in \lbrace 0,2 \rbrace$ and $l_2 = 0$. Once more note that
\[ c^{k_1} d^{k_2} q^2 = - c^{k_1} d^{k_2} = -h \]
for $h = c^{k_1} d^{k_2}$ and we may make use of the linearity again. The matrix $h$ corresponds to the group element $\left[ e_a^{k_1}, e_b^{k_2} \right]$ and we compute it to be
\[ h = 
\begin{pmatrix}
\rr\left(\left(\frac{k_1}{a} - \frac{k_2}{b} \right) \pi\right) & 0 \\
0 & \rr\left(\left(\frac{k_1}{a} + \frac{k_2}{b} \right) \pi\right)
\end{pmatrix}
. \]
From now on let
\[ \eta = \frac{k_1}{a} \pi \quad \text{and} \quad \nu = \frac{k_2}{b} \pi. \]
Then we obtain
\[ \chi_4 (h) = 4 \cos \left(\eta \right)\cos \left(\nu \right). \]
Summarizing this yields the only nonzero cases for $l_1 \in \lbrace 0,2 \rbrace$ and $l_2 = 0$ giving
\[ \chi_4 (h) = (-1)^{\frac{l_1}{2}} 4 \cos \left(\eta \right)\cos \left(\nu \right). \]

Returning back to $g \in \Gab$ with $m=0$ and using the block structure \eqref{bigblock} we obtain 
\begin{align*}
h	&= c^{k_1} d^{k_2} q^{l_1} s^{l_2} \\
h'	&= c^{\rho k_1} d^{k_2} q^{3 l_1} s^{l_2}
\end{align*}
for the \Hab -blocks. We investigate the same cases for the powers as before. If $l_1 + l_2$ is odd, then so is $3l_1 + l_2$ and therefore $\chi_4 (h') = 0$ giving $\chi (g) = 0$. If $l_2=1$ and $l_1$ is odd then so is $3l_1$ and in the same manner we obtain $\chi (g) = 0$. For $l_2 = 0$ and $l_1$ even we obtain
\[ 3l_1 = \begin{cases}
0 				&\quad \text{for} \quad l_1 = 0, \\
6 = 2 \mod 4	&\quad \text{for} \quad l_1 = 2.
\end{cases} \]
This yields
\[ \chi (g) = (-1)^{\frac{l_1}{2}} 4 \left( \cos \left(\eta \right) + \cos \left(\rho \eta \right) \right) \cos \left(\nu \right) \]
if $l_1 \in \lbrace 0,2 \rbrace$ and $l_2=0$ and $\chi (g) = 0$ in all other cases.

Knowing the character for every element $g \in \Gab$ allows us to calculate the quantities $\chi_{(d)}$. Note that we only need them for group elements with $\chi (g) \ne 0$ because of the corresponding factor in the dimension formula \eqref{eqsat2}. To perform these calculations for $d=2, 3$ we still need to consider $\chi\left(g^d\right)$. Note that for $l_1 \in \lbrace 0,2 \rbrace$ and $l_2=m=0$ we obtain
\[ g^2 = C^{2 k_1} D^{2 k_2} Q^{2 l_1} = C^{2 k_1} D^{2 k_2}, \]
using the relations on the generating elements, and therefore
\[ \chi (g^2) = 4 \left( \cos \left(2 \eta \right) + \cos \left(2 \rho \eta \right) \right) \cos \left(2 \nu \right). \]
In an analogue way we obtain
\[ g^3 = C^{3 k_1} D^{3 k_2} Q^{l_1} \]
and therefore
\[ \chi (g^3) = (-1)^{\frac{l_1}{2}} 4 \left( \cos \left(3 \eta \right) + \cos \left(3 \rho \eta \right) \right) \cos \left(3 \nu \right). \]
We can then put the parts together to obtain $\chi_{(d)}$ for $d=2,3$ which we use to calculate $R_2$ and $R_3$. The remaining steps are a subtle computation using calculation rules for cosine and the geometric sum formula. The details shall be omitted at this point but can be found in the Appendix (\ref{appsatt}). Performing the calculations we obtain:
\begin{lemma}
	The dimensions $R_d=\dim P_{\Gab}^d\left(\RR^8, \RR^8 \right)$ for $d=1,2,3$ are
	\begin{align*}
	R_1 &= 1, \\
	R_2 &= 0, \\
	R_3 &= \begin{cases}
	8	&\quad \text{for} \quad a=5, \\
	5	&\quad \text{else}.
	\end{cases}
	\end{align*}
\end{lemma}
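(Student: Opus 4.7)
The plan is to apply Sattinger's dimension formula \eqref{eqsat2} in its finite-group form, directly evaluating
\[ R_d = \frac{1}{|\Gab|}\sum_{g\in\Gab}\chi_{(d)}(g)\,\chi(g). \]
The preceding character analysis has already reduced the sum to elements $g = C^{k_1}D^{k_2}Q^{l_1}$ with $l_1 \in \{0,2\}$, on which $\chi(g) = (-1)^{l_1/2}\cdot 4(\cos\eta + \cos\rho\eta)\cos\nu$, and $\chi(g^j)$ is the analogous product with the angles scaled by $j$ (carrying the sign factor exactly when $j$ is odd). The value $R_1 = 1$ follows at once from absolute irreducibility via Schur's lemma.

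For $R_2$, both monomials of $\chi_{(2)}(g)\chi(g) = \tfrac12\chi(g)\chi(g^2) + \tfrac12\chi(g)^3$ carry an \emph{odd} power of $(-1)^{l_1/2}$, so summation over $l_1 \in \{0,2\}$ already gives $R_2 = 0$. For $R_3$, every monomial in $\chi_{(3)}(g)\chi(g)$ carries an \emph{even} power of $(-1)^{l_1/2}$, so the sum over $l_1$ merely contributes a factor of $2$. Since the character factors as a product of an $\eta$-part and a $\nu$-part, the $k_1$- and $k_2$-sums separate. The $\nu$-factor of the $\chi(g)\chi(g^3)$ summand is $\cos\nu\cos 3\nu = \tfrac12(\cos 2\nu + \cos 4\nu)$, which has no constant term, and neither of $\pm 2, \pm 4$ is $\equiv 0 \pmod{2b}$ for odd $b \geq 3$; this summand therefore drops out completely.

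What remains is to compute the $k_1, k_2$ sums of $\chi(g)^4$ and $\chi(g)^2\chi(g^2)$. The $\nu$-sums are immediate: $\sum_{k_2}\cos^4\nu = 3b/8$ and $\sum_{k_2}\cos^2\nu\cos 2\nu = b/4$. For the $\eta$-sums, I would expand $(\cos\eta + \cos\rho\eta)^4$ and $(\cos\eta + \cos\rho\eta)^2(\cos 2\eta + \cos 2\rho\eta)$ via the multinomial theorem and product-to-sum identities into linear combinations of $\cos((s_1 + s_2\rho)\eta)$ with bounded integers $s_1, s_2$. Since $\rho$ is odd, every such argument $s_1 + s_2\rho$ is even, and the orthogonality
\[ \sum_{k_1=0}^{a-1}\cos\!\left(\frac{m k_1 \pi}{a}\right) = \begin{cases} a & \text{if } m \equiv 0 \pmod{2a}, \\ 0 & \text{otherwise} \end{cases} \qquad (m \text{ even}) \]
shows that only arguments with $s_1 + s_2\rho \in \{0, \pm 2a\}$ contribute, since $|s_1 + s_2\rho| \leq 4a$ throughout our expansions.

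The trivial $(s_1,s_2)=(0,0)$ terms give $\sum_{k_1}(\cos\eta + \cos\rho\eta)^4 = 9a/4$ and $\sum_{k_1}(\cos\eta + \cos\rho\eta)^2(\cos 2\eta + \cos 2\rho\eta) = a/2$, which yields $R_3 = (2\cdot 9a/4 + a/2)/a = 5$. The main obstacle, and the decisive arithmetic step, is ruling out further contributions for $a > 5$: a nontrivial $(s_1,s_2)$ with $s_1 + s_2\rho = 0$ requires $s_1 = -s_2\rho$ with $s_2\ne 0$, hence $\rho \leq |s_1| \leq 4$, forcing $\rho = 3$ and $a=5$; and any solution with $s_1 + s_2\rho = \pm 2a$ occurs only for $|s_2| = 3$ within our expansions, so $(3\rho)^2 \equiv s_1^2 \pmod a$, which combined with $\rho^2 \equiv -1 \pmod a$ yields $s_1^2 \equiv -9 \pmod a$ with $s_1^2 \in \{1,9\}$, hence $a \mid 10$ and again $a = 5$. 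For $a=5, \rho=3$ the resonances $3-\rho = 0$ and $1+3\rho = 2a$ each add a contribution of $5$ to the two $\eta$-sums, which together increase $R_3$ by $(2\cdot 5 + 5)/5 = 3$, giving $R_3 = 8$. The explicit expansions are deferred to the Appendix (see \ref{appsatt}).
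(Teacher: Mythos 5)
Your proposal is correct and follows essentially the same route as the paper: Sattinger's formula, the reduction to elements with $l_1\in\{0,2\}$, $l_2=m=0$, the sign cancellation over $l_1$ for $R_2$, and for $R_3$ the cosine orthogonality over $k_1,k_2$ combined with the congruence analysis modulo $2a$ driven by $\rho^2\equiv-1\bmod a$ (your uniform $s_1+s_2\rho$ argument is a repackaging of the paper's case-by-case congruence lemma, and correctly isolates $a=5$, $\rho=3$ as the only resonant case). In fact you carry the final bookkeeping further than the paper, which defers it to an appendix and omits the explicit sums; the only blemish is the stray claim $s_1^2\in\{1,9\}$ where $|s_2|=3$ forces $s_1^2\in\{0,1\}$, which does not affect the conclusion.
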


\subsection[Equivariant maps in the case \texorpdfstring{$a=5$}{a=5} and \texorpdfstring{$b=3$}{b=3}]{Equivariant maps in the case $\boldsymbol{a=5}$ and $\boldsymbol{b=3}$}
\label{subsec_es}

We want to determine the equivariant structure up to third degree for the smallest group we can construct with the method presented in Sections \ref{lseb_mr} and \ref{ssma_gr} which is $G_{5,3}$. The groups in the family $\GG_5$ form a special case in our considerations as we have seen from the calculations of the Molien coefficients. We point out when this is important in a remark at the end of the section. By irreducibility we already know that the only linear equivariants are scalar multiples of the identity. Furthermore we  have no quadratic $G_{5,3}$-symmetric maps on $\RR^8$, since $R_2=0$ and the space of cubic equivariants is $8$-dimensional. 

There are several ways to find equivariant maps of a given degree. \autocite{sattinger1979group} investigates some simple examples. \autocite{lauterbach2010do} describe methods for groups that are constructed in a similar way as the ones we consider using complex polynomials. More general results and computer algebra systems can be found in \autocite{gatermann1999computer, gatermann1991software} and \autocite{gatermann1996grobner}. We have chosen an elementary method to calculate a basis using general homogeneous polynomials and having Maple \cite{maple2012} solve for the coefficients under the assumption of equivariance with respect to the generating matrices. We obtain eight linearly independent polynomial maps $E_1, \ldots ,E_8$ that prove to meet the symmetry condition. They can be found in the Appendix in \autoref{appce}.

\subsection{The general case}
\label{subsec_gc}

We want to use the results for the case $a=5, b=3$ to obtain the full picture for all groups. By construction of the groups \Gab it follows that the only dependence on the parameters $a$ and $b$ is in the matrices $C(a)$ and $D(b)$. A short calculation shows that the vector fields $E_1, \ldots , E_5$ remain equivariant with respect to the matrices $C(a)$ and $D(b)$ with arbitrary $a \in \A$ and $b \in 2\NN+1$ such that $\gcd(a,b)=1$. We may even prove that $E_1, \ldots, E_5$ are equivariant with respect to a matrix $D(\psi)$ that describes an arbitrary angle of rotation $\psi \in S^1$. This gives us the final result on the equivariant structure and hence completes the proof of \autoref{thmes}.

\begin{rem}
\begin{enumerate}[label={(\arabic*)}]
\item The dimension of $P_{\Ga}^d\left(\RR^8, \RR^8 \right)$ is at most the dimension of $P_{\Gab}^d\left(\RR^8, \RR^8 \right)$ with $\Gab \le \Ga$. As a consequence we obtain $P_{\Ga}^3\left(\RR^8, \RR^8 \right) = P_{\Gab}^3\left(\RR^8, \RR^8 \right)$.
\item A similar statement holds true for the matrices $C$. We define the matrix $c(\phi)$ to be the representing matrix of $[e^{\phi \ii},1]$ and $\tilde{C}(\phi, \phi')$ as the diagonal blockmatrix of $c(\phi), c(\phi')$ for arbitrary distinct angles $\phi, \phi' \in S^1$. This leads to a compact $3$-dimensional Lie group
\[ \tilde{\mathbf{G}} = \left\langle Q, S, V, \tilde{C}(\phi, \phi'), D(\psi) \mid \phi, \phi', \psi \in S^1 \right\rangle. \]
It is easy to see that $E_1, \ldots, E_5$ are equivariant with respect to $\tilde{\mathbf{G}}$. Therefore the space $P_{\tilde{\mathbf{G}}}^3\left(\RR^8, \RR^8 \right)$ is also generated by these vectorfields. But as mentioned before $\tilde{\mathbf{G}}$ is not obtained from the closure of the union of all \Ga.
\item The Lie group $\tilde{\mathbf{G}}$ contains the matrices $C(a)$ and $D(b)$ for arbitrary values of $a$ and $b$. This could have served as proof for the \Gab-equivariance of $E_1 ,\ldots, E_5$.
\item At first it may appear odd that the number of linearly independent cubic equivariant polynomials is different in the case $a=5$. But from equivariance with respect to the Lie group $\tilde{\mathbf{G}}$ it follows that the dimension of cubic equivariants has to become stationary for some value of $a$. This occurs at the first step from $a=5$ to $a=13$. Therefore, when investigating \Gab -symmetric dynamical systems, we need to take care of the case $a=5$ separately.
\end{enumerate}
\end{rem}


\section{Generic symmetry breaking bifurcations}
\label{ssma_mr}

In this section we want to investigate bifurcation problems on $\RR^8$ which are symmetric with respect to the groups \Gab that we have constructed before. In order to do so, we use methods proposed by \autocite{field1996symmetry, field2007dynamics} and \autocite{field1990symmetry}. The authors use techniques from equivariant transversality to develop a complete geometric theory on equivariant dynamics. It allows us, similar to the equivariant branching lemma, to obtain results on bifurcations in generic equations that are symmetric with respect to a given representation. The basic principle is that it suffices to investigate Taylor expansions up to some critical degree to gather information on the dynamical behaviour. Since these polynomials are equivariant as well, we can apply methods from invariant theory to calculate possible terms in the expansion, which is what we have done in the previous section using formula \eqref{eqsat2}. The authors even prove that we can always find such a critical degree $d$ in which the branching of solutions is fully determined. We say that the equivariant bifurcation problems are \emph{$d$-determined}. However we will not go that far here, as we see that the cubic truncation suffices to prove the bifurcation result. For this reason we do not try to establish determinacy statements. In our case we can apply a \emph{polar blowing-up} technique from the texts mentioned above to find a nontrivial branch of solutions bifurcating off the trivial one. All the methods used in this section are  formulated in Chapter 4 of \autocite{field2007dynamics}, where we can also find the technical details that we partly omit here. Furthermore we use a slight modification of this approach which respects the restriction on fixed point spaces of isotropy subgroups. This will be pointed out explicitly when we make use of it. As the equivariant structure forms a special case for $a=5$ (compare to the previous section), we restrict ourselves to $a \in \A$ with $a > 5$ for the rest of this section.

\subsection{Normalized families of equivariant vector fields}
Following the notation of \autocite{field2007dynamics} we let ${\mathcal{V}\left(\RR^8,G\right) = \CG}$ be the set of smooth $G$-equivariant vector fields, for $G\in \GG_a$ and $5<a\in \A$, depending on a real parameter. The action of $G$ on the product space is defined to be only on the first component. We equip the function space with the $C^\infty$-topology and subsets with the induced topology. For $f\in \mathcal{V}\left(\RR^8, G \right)$ we define the $1$-parameter family $\lbrace f_\lambda \rbrace_\lambda$ of smooth $G$-symmetric vector fields on $\RR^8$ by $f_\lambda = f( \cdot, \lambda)$. By equivariance we get 
\begin{align*}
f(0,\lambda) 		&= 0 \quad \text{for every } \lambda\in\RR,\\
D_1f(0,\lambda) 	& = \sigma_f(\lambda) \mathbbm{1} _8 
\end{align*}
with $\sigma_f \in C^\infty\left(\RR\right)$. This set of zeros will be called the branch of \emph{trivial zeros} and we are looking for solution branches bifurcating off this branch as we vary $\lambda$. As long as $\sigma_f(\lambda) \ne 0$ we can use the implicit function theorem to obtain a neighbourhood $U$ of $(0,\lambda)$ such that the only zeros in $U$ are trivial. We are therefore interested in points $\lambda_0 \in \RR$ with $\sigma_f(\lambda_0)=0$ to find nontrivial solutions. Generically in such a point $f$ will satisfy $\sigma_f'(\lambda_0) \ne 0$ which we will assume from now on. Furthermore we can assume $\lambda_0 = 0$ without loss of generality and use the inverse function theorem to reparametrize $\lambda$ so that $\sigma_f(\lambda) = \lambda$ for $\lambda$ near $0$. The extension of $\sigma_f$ to all the real numbers in the same manner does not impose a loss of generality, since we are only interested in branching close to the trivial solution. These considerations motivate the restriction to the closed affine linear subspace
\[ \VV = \VV \left(\RR^8, G\right) = \left\lbrace f \in \mathcal{V} \left( \RR^8, G \right) \mid \sigma_f
(\lambda) = \lambda, \lambda \in \RR \right\rbrace \]
of normalized families of smooth $G$-equivariant vector fields on $\RR^8$. For $f\in \VV$ we may write
\[ f_\lambda (x) = f(x,\lambda) = \lambda x + F_\lambda(x)\]
using Taylor's theorem, to which we refer as \emph{normalized bifurcation problem}.

\subsection{Nonradial equivariant polynomial maps}
To follow the methods of Field the next step is to find $G$-equivariant polynomial maps on $\RR^8$ which are nonradial. A polynomial map $P$ is called \emph{radial} if it is of the form
\[ P = p \mathbbm{1}_8 \]
where $p \colon \RR^8 \to \RR$ is an invariant polynomial. We call $d=d(G,\RR^8)$ the smallest degree in which nonradial equivariant polynomial maps exist. We have seen before from the Molien coefficients (Section \ref{ssma_es}) and \autoref{appce} in the Appendix that $d = 3$. For $f\in \VV$ let $R$ be the Taylor polynomial of $f_0$ of order $d$ at the origin. Using Taylor's theorem we obtain
\begin{equation*}
\label{normfam}
f(x,\lambda) = \lambda x + R(x) + F_1 (x) + \lambda F_2 (x,\lambda)
\end{equation*}
with $F_1(x) = f(x,0)-R(x) = \mathcal{O}(\|x\|^{d+1})$ and $F_2(x,\lambda) = f(x,\lambda) - R(x) - F_1(x) = \mathcal{O}(\|x\|^d)$ (see \autocite{field2007dynamics}).

It is a well known fact (that can be recalled from the results of Chapter 5 in \autocite{chossat2000methods}) that the homogeneous terms in the Taylor expansion of  an equivariant vector field are equivariant as well. The linear part $\lambda \mathbbm{1}_8$ of the Taylor polynomial of $f_\lambda$ vanishes for $\lambda = 0$. If we look again at the Molien coefficients, we find that there are no quadratic $G$-equivariant polynomial maps. Therefore $R \in P_{G}^3\left(\RR^8, \RR^8 \right)$ which is generated by $E_1, \ldots , E_5$ and hence $R$ must be of the form
\[ R= \alpha E_1 + \beta E_2 + \gamma E_3 + \delta E_4 + \epsilon E_5 \quad \text{with} \quad \alpha, \beta, \gamma, \delta, \epsilon \in \RR. \]

\subsection{Phase vector fields and fixed point subspaces}
As mentioned in the introduction to this section, the major technical tool to find nontrivial solution branches is a polar blowing-up technique. Using general polar coordinates we can decompose the normalized function $f\in \VV$ into a spherical part -- a smooth vector field on the unit sphere -- and a radial part perpendicular to the sphere. A suitable solution for the spherical part with the radial coordinate $0$ can then be generalized to other radial values, using the implicit function theorem, leading to nontrivial solutions. But first, we adapt Field's method in such a way, that it applies in fixed point spaces of isotropy subgroups. In our case this reduces the dimension by four, which is convenient for the following computations.

As we have seen before (Theorem \ref{thmgroups}) each group $G \in \GG_a$ has precisely one nontrivial isotropy type $[K]$ which contains the conjugate subgroups of
\[ K = \left\langle QS \right\rangle = \left\langle \diag{1,-1,1,-1,-1,1,-1,1} \right\rangle.\]
The subgroup $K$ obviously fixes elements of the subspace
\[ \fix{K} = \lbrace x \in \RR ^8 \mid x_2=x_4=x_5=x_7=0 \rbrace \cong \RR^4.\]
Utilizing the symmetry property of $f$, it therefore suffices to consider $\fix{K}$. Denote the coordinates by
\[ y = \left(y_1, y_2, y_3, y_4 \right) \in \fix{K}. \]
By equivariance $f_\lambda$ fixes $\fix{K}$ for each $f\in \VV$:
\[ kf_\lambda(x)  =f_\lambda(kx) = f_\lambda(x) \]
for $k \in K$ and  $x\in \fix{K}$. Therefore $f_\lambda(x) \in \fix{K}$ for all $x\in \fix{K}$. With this we may now restrict $f_\lambda$ and $R$ to $\fix{K}$, which is the part that does not appear in the texts by Field and Field \& Richardson. Since the blow-up method does not interfere with this reduction, we may perform it in the fixed point space just as well.

To investigate bifurcation behaviour in $\fix{K}$ we calculate the so called \emph{phase vector field} of the cubic equivariant polynomial maps. For $R$ as before restricted to $\fix{K}$ it is defined as the vector field
\[ \pvf{R}(y) = R(y) - \left\langle R(y), y \right\rangle y \quad \text{with} \quad y \in S^3 \subset \fix{K}\]
which is the tangential component of the restriction of $R$ to the unit sphere $S^3$ in $\RR^4$. Up to a factor depending on the radial coordinate, it is equal to the spherical part of $R$. Furthermore the phase vector field $\pvf{R}$ coincides with the spherical part of $f$ if the radial coordinate is $0$, which is the starting point for the blow-up technique. Note that the phase vector field of a radial polynomial vanishes and therefore cannot provide any information on solutions of the original equation. The projection on the phase vector field is a linear map from $P_{\Gab}^3\left(\RR^4, \RR^4 \right)$ to $P_{\Gab}^5\left(S^3, S^3 \right)$ so 
\[\pvf{R} = \alpha \pvf{E_1} + \beta \pvf{E_2} + \gamma \pvf{E_3} + \delta \pvf{E_4} + \epsilon \pvf{E_5}. \]
The phase vector fields of $E_1, \ldots E_5$ can be found in the appendix (see \autoref{apppfv}).
As we have seen in \autoref{thmes} the cubic equivariants $E_1, \ldots, E_5$ are \Ga-symmetric as well and the same holds for $\pvf{E_1}, \ldots, \pvf{E_5}$. Hence they leave the fixed point spaces of isotropy subgroups of \Ga invariant. In \autoref{liegroup} we have proved that \Ga has subgroups with one-dimensional fixed point spaces. These intersect the sphere in two points and therefore directly lead to zeros of the phase vector fields $\pvf{E_1}, \ldots, \pvf{E_5}$. For example the group
\[ \left\langle CD\left( - \frac{\pi}{a} \right)^{\rho}, QS \right\rangle < \Ga \]
fixes the one-dimensional subspace $\langle (0, \ldots, 0, 1)^T \rangle \subset \RR^8$. This is a subspace of $\fix{K}$ as well and reads $\langle (0,0,0,1)^T \rangle$ in the corresponding coordinates. Thus $y_0 = (0,0,0,1)^T$ and $-y_0$ are common zeros of $\pvf{E_1}, \ldots, \pvf{E_5}$ and therefore $\pvf{R}(\pm y_0) = 0$ for any linear combination. The Jacobian of $\pvf{R}(y_0)$ has the eigenvalues $-\alpha + \delta, -\alpha+\gamma, -\alpha+\beta$ and $-2\alpha$.
So we see that $y_0$ is a hyperbolic zero of $\pvf{R}$  if $\alpha \ne 0$, $\alpha \ne \beta$, $\alpha \ne \gamma$ and $\alpha \ne \delta$. These conditions are met for an open and dense subset of $\RR^5$ and therefore $y_0$ is generically a hyperbolic zero for $\pvf{R}$. This allows us to start the blow-up technique which, using the implicit function theorem, provides us with nontrivial hyperbolic solutions to $f = 0$ depending on the value of the radial coordinate. These can be reformulated into a solution curve bifurcating off the trivial solution where the direction of branching is $y_0$. By construction this new branch of solutions lies in the fixed point space $\fix{K}$ meaning that the isotropy type $[K]$ is symmetry breaking. For the technical details of the blow-up method see Lemma 4.8.1. and its proof from \autocite{field2007dynamics} with the fact that $\langle R(y_0), y_0 \rangle = \alpha$ which is generically not zero. As we have seen, the branching of steady states occurs for a generic bifurcation problem. This completes the proof for the main theorem on bifurcations with \Gab -symmetry.

\section{The special case $a=5$}
To conclude the above considerations we want to briefly discuss the special case $a=5$ and point out that it is not so special after all. As we have seen in \autoref{ssma_es} the main difference between the groups $G_{5,b}$ and \Gab (for admissible values of $b$) lies in the structure of equivariant polynomial maps. This in turn influences the argumentation to prove the bifurcation result. In \autoref{subsec_es} we have computed the space of equivariant cubic polynomial maps $P_{G_{5,b}}^3 \left(\RR^8, \RR^8 \right)$ to be generated by the maps $E_1, \ldots, E_8$ (see \autoref{appce}). Furthermore, in \autoref{subsec_gc}, we have seen that the corresponding spaces are generated by $E_1,\ldots,E_5$ whenever $a>5$. A major aspect for the proof of the bifurcation result is the fact that these maps are equivariant not only with respect to the groups \Gab but also with respect to the Lie groups \Ga. A short calculation shows that this holds true for the maps $E_6, E_7$ and $E_8$ as well. But, on the contrary to the case $a>5$, the additional maps are not equivariant with respect to the largest Lie group $\tilde{\mathbf{G}}$ that we have considered.

Nevertheless we may use the same technique to investigate the bifurcation behavior in the presence of $G_{5,b}$ symmetry as before. We only sketch the proof here since it is completely analog to the one before. We consider the cubic truncation of a normalized bifurcation problem
\[
R =  \sum_{i=1}^{8} \alpha_i E_i \quad \text{with} \quad a_i \in \RR
\]
and restrict to the fixed point subspace
\[ \fix{K} = \lbrace x \in \RR ^8 \mid x_2=x_4=x_5=x_7=0 \rbrace \cong \RR^4.\]
Then we consider the corresponding phase vector field
\[ \pvf{R}(y) = R(y) - \left\langle R(y), y \right\rangle y \quad \text{with} \quad y \in S^3 \subset \fix{K}\]
which by the same argumentation as before -- one-dimensional fixed point space of an isotropy subgroup of $\mathbf{G}_5$ -- has the zero $y_0 = (0,0,0,1)^T$. This is once again generically hyperbolic and thus we can apply the polar blowing up method to obtain a branch of zeros for the bifurcation equation bifurcating off the trivial solution.

Summing up we see that the case $a=5$, even though it has to be treated separately, does not imply significant differences. It provides the same bifurcation result which can be proved using the same techniques. The main point of interest lies in the structure of the equivariant maps as $P_{\Gab}^3\left(\RR^8, \RR^8 \right) \subset P_{G_{5,b}}^3\left(\RR^8, \RR^8 \right)$ as a proper subspace.


\section{Further groups and even dimensional representations}
\label{ssma_fg}

In this paper we have constructed groups of order $16\m$ -- here $\m=a\cdot b$ with $a,b \ne 1$, relatively prime, odd and $a>5$ being a product of prime powers of the form $1 \mod 4$ -- with an $8$-dimensional absolutely irreducible representation that provide counterexamples to the Ize conjecture. Furthermore \autocite{lauterbach2014equivariant} describes groups of order $64+128\ell$ with $\ell\in \NN$ with the same purpose. In both cases there are generically symmetry breaking isotropy types. But comparing these results to the GAP-calculations provided by \mbox{Table 7} in \autocite{lauterbach2010do} we still expect a vast number of counterexamples to the Ize conjecture in $8$ dimensions that have not yet been investigated systematically. It is an open task to find a reasonable ordering for the groups in terms of their orders and to obtain information on the dynamics in their $8$-dimensional representations.

In order to do so, a first step could be to slightly adapt the construction of \Gab in such a way that we define the $8\times8$ generating matrix $Q$ to be
\[ Q = \begin{pmatrix}
q	& 0 \\
0	& q
\end{pmatrix}\] 
instead of the second block being $-q$. We obtain two isotropy types in this case for which it would be interesting to determine whether both of them are generically symmetry breaking.  Another task is to determine the role of the $3$-dimensional Lie group containing all the \Ga. This has not yet been sufficiently investigated.

Furthermore we see from Tables 5-10 in \autocite{lauterbach2010do} that there are further groups acting absolutely irreducibly in dimensions $4,8,12,16$ and $20$ that appear to lead to counterexamples to the Ize conjecture but none with the same property in dimensions $2,6,10,14$ and $18$ (at least for small group orders). The authors formulate the conjecture: \emph{``For dimensions $N = 0 \mod 4$, there are infinitely many groups acting absolutely irreducibly on $\RR^N$ that have no isotropy subgroups with odd-dimensional fixed point spaces. But for dimensions $N = 2 \mod 4$, there are no such groups''}. There is some evidence for this conjecture to be true as the GAP calculations do not provide counterexamples in the second case for groups of order up to $1000$. Furthermore \autocite{ruan2011fixed} proves the claim for dimension $6$ under the mild additional assumption that the groups are solvable.

These intermediate steps and the conjecture of \autocite{lauterbach2010do} would provide some major insight in the question if absolute irreducible group actions lead to generically symmetry breaking isotropy types. This interpretation of the Ize conjecture from the dynamical systems point of view would be a significant contribution to the understanding of bifurcations in the presence of symmetry. However such a general statement is still far from being proved.

\renewcommand{\abstractname}{Acknowledgements}
\begin{abstract}
\noindent
R.L. would like to thank U. Kühn for some helpful discussions on modular \mbox{congruences.}
\end{abstract}


\begin{appendix}
\renewcommand\thetheorem{\thesubsection.\arabic{theorem}}
\makeatletter
\@addtoreset{theorem}{subsection}
\makeatother
\renewcommand\theequation{\thesubsection.\arabic{equation}}
\makeatletter
\@addtoreset{equation}{subsection}
\makeatother
\section{Appendix}

\subsection{Calculations of Molien coefficients}
\label{appsatt}

In this section we want to fill the gaps that were left in Section \ref{ssma_es} in the calculations of 
\[ R_d = \frac{1}{\left| \Gab \right|} \sum_{g \in \Gab} \chi_{(d)} (g) \chi (g) \]
for $d=2,3$. First of all let
\[ \eta = \frac{k_1}{a} \pi \quad \text{and} \quad \nu = \frac{k_2}{b} \pi \]
and note that
\begin{equation*} 
\chi_{(2)} (g) = 2 \left( \cos \left( 2 \eta \right) + \cos \left(2 \rho \eta \right) \right) \cos \left(2 \nu \right) 
+ 8 \left( \cos \left(\eta \right) + \cos \left(\rho \eta \right) \right)^2 \cos \left(\nu \right)^2 
\end{equation*}
which only depends on $k_1$ and $k_2$. Remember that the only nonzero terms occur for $l_1 \in \lbrace 0,2 \rbrace$ and $l_2 = m = 0$. This allows us to calculate
\[
R_2 = \frac{1}{16ab} \sum_{k_1 = 0}^{a-1} \sum_{k_2 = 0}^{b-1} \sum_{l_1 \in \lbrace 0,2 \rbrace } (-1)^{\frac{l_1}{2}} 4 \chi_{(2)}(g) \left( \cos \left(\eta \right) + \cos \left(\rho \eta \right) \right) \cos \left(\nu \right).
\]
Because of the factor $(-1)^{\frac{l_1}{2}}$  whereas the rest of each summand is independent of $l_1$ these two summands cancel each other. This directly yields

\begin{prop}
There are no quadratic equivariant maps for the $8$-dimensional representation of \Hab:
\[R_2 = 0. \]
\end{prop}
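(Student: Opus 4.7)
The proposition is a clean cancellation statement, so the plan is to show that the summand $\chi_{(2)}(g)\chi(g)$ in the averaging formula is odd under $l_1 \mapsto l_1 + 2$ and hence that pairing the contributions of $l_1 = 0$ and $l_1 = 2$ gives zero.

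First I would note that only elements with $l_1 \in \{0,2\}$, $l_2 = 0$ and $m = 0$ can contribute, since for every other choice the character $\chi(g)$ vanishes (as established in the main text by the block structure analysis). So the sum defining $R_2$ reduces to a double sum over $k_1 \in \mathbb{Z}/a\mathbb{Z}$, $k_2 \in \mathbb{Z}/b\mathbb{Z}$, with an inner sum over $l_1 \in \{0,2\}$.

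Next I would inspect the two factors of the summand separately. From the calculations in Section~\ref{ssma_es}, $\chi(g) = (-1)^{l_1/2} \cdot 4(\cos\eta + \cos\rho\eta)\cos\nu$, so the $l_1$-dependence of $\chi(g)$ is entirely captured by the sign $(-1)^{l_1/2}$. The key observation is then that $\chi_{(2)}(g) = \tfrac{1}{2}(\chi(g^2) + \chi(g)^2)$ is independent of $l_1$: squaring removes the sign, and using the generator relations from Lemma~\ref{rel} (in particular $Q^4 = \mathbbm{1}_8$) one has $g^2 = C^{2k_1}D^{2k_2}$ for both values $l_1 = 0$ and $l_1 = 2$, so $\chi(g^2)$ is the same for both. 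This is exactly the expression recorded in the excerpt.

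Putting these together, the inner sum over $l_1 \in \{0,2\}$ reads
\[
\sum_{l_1\in\{0,2\}} (-1)^{l_1/2} \cdot 4\,\chi_{(2)}(g)\bigl(\cos\eta + \cos\rho\eta\bigr)\cos\nu,
\]
and the two terms are identical except for opposite signs. They cancel for every $(k_1,k_2)$, so the entire sum vanishes and $R_2 = 0$. I do not anticipate any real obstacle here; the only thing to be careful about is verifying that the $l_1$-independence of $\chi_{(2)}(g)$ really follows from the relations on the generators, but this is immediate from $Q^{2l_1} = \mathbbm{1}_8$ for $l_1\in\{0,2\}$.
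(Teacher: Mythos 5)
Your proof is correct and follows exactly the same route as the paper: restrict to the elements with $l_1\in\{0,2\}$, $l_2=m=0$, observe that $\chi_{(2)}(g)$ depends only on $k_1,k_2$ (via $g^2=C^{2k_1}D^{2k_2}$ and the squaring of $\chi(g)$), and cancel the two terms of the inner sum over $l_1$ against each other using the sign $(-1)^{l_1/2}$. No gaps.
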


The case $d=3$ is much more complicated. We compute
\begin{align*}
\chi_{(3)} (g) 
	&= (-1)^{\frac{l_1}{2}} \frac{32}{3} \left( \cos \left(\eta \right) + \cos \left(\rho \eta \right) \right)^3 
	 \cos \left(\nu \right)^3 \\
	&\phantom{=} \mathrel{+} (-1)^{\frac{l_1}{2}} 8 \left( \cos \left(2 \eta \right) + \cos \left(2 \rho \eta \right) \right) 
	 \left( \cos \left(\eta \right) + \cos \left(\rho \eta \right) \right) \cdot \cos \left(2\nu \right) \cos \left(\nu \right) \\
	&\phantom{=} \mathrel{+}  (-1)^{\frac{l_1}{2}} \frac{4}{3} \left( \cos \left(3 \eta \right) + \cos \left(3 \rho \eta \right) \right) 
	 \cos \left(3\nu \right).
\end{align*}
This depends on $l_1$ therefore the terms do not cancel out as easily as in the case $d=2$. To be able to compute $R_3$, we state two technical lemmas first.

\begin{lemma}
Let $w \in \NN$ and $l \in \ZZ$. Then
\[ \sum_{k=0}^{w-1} \cos 2l\frac{k}{w} \pi = 
\begin{cases}
0	&\quad \text{for} \quad w \nmid l , \\
w	&\quad \text{else}.
\end{cases} \]
\end{lemma}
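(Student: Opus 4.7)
The plan is to recognize the sum as the real part of a geometric sum of $w$-th roots of unity. Concretely, I would write
\[ \cos\!\left(2l\frac{k}{w}\pi\right) = \Re\!\left(e^{2\pi\ii lk/w}\right) = \Re(\zeta^k), \quad \text{where } \zeta = e^{2\pi\ii l/w}, \]
so that
\[ \sum_{k=0}^{w-1} \cos\!\left(2l\frac{k}{w}\pi\right) = \Re\!\left(\sum_{k=0}^{w-1} \zeta^k\right). \]

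Next I would split into two cases according to whether $w$ divides $l$. If $w \mid l$, then $l/w \in \ZZ$, hence $\zeta = 1$, every term of the sum equals $1$, and the sum is $w$; taking the real part yields $w$, as claimed. If $w \nmid l$, then $\zeta \neq 1$, and the finite geometric series formula gives
\[ \sum_{k=0}^{w-1} \zeta^k = \frac{\zeta^w - 1}{\zeta - 1} = \frac{e^{2\pi\ii l} - 1}{\zeta - 1} = 0, \]
since $l \in \ZZ$ forces $e^{2\pi\ii l} = 1$. Taking real parts yields $0$.

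There is no real obstacle here; this is a standard computation. The only minor point to be careful about is ensuring that the denominator $\zeta - 1$ is nonzero precisely in the case $w \nmid l$, which follows from the fact that $e^{2\pi\ii l/w} = 1$ is equivalent to $l/w \in \ZZ$. The degenerate cases $l = 0$ or $w = 1$ are automatically absorbed into the $w \mid l$ branch and give the expected value $w$.
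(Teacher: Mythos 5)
Your proof is correct and is exactly the standard argument the paper has in mind: it states this lemma without proof, remarking only that the surrounding computation uses ``calculation rules for cosine and the geometric sum formula,'' which is precisely your reduction to $\Re\bigl(\sum_{k=0}^{w-1}\zeta^k\bigr)$ with $\zeta = e^{2\pi\ii l/w}$. The case split on $w \mid l$ and the observation that $\zeta^w = 1$ while $\zeta \neq 1$ when $w \nmid l$ are handled correctly, so nothing is missing.
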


\noindent
We want to use this lemma to calculate sums of such cosine terms that contain an even factor in front of $(k/w) \pi$. We have to distinguish whether this factor is an integer multiple of $2w$. The following lemma performs this distinction in the occurring cases.

\begin{lemma}
\label{cong}
Let $a \in \A$ and $\rho$ be chosen as in Proposition \ref{prime} and odd (compare to the construction of \Gab). Then
\begin{enumerate}[label={(\arabic*)},ref={\thecor~(\arabic*)}]
\item \label{cong1} $\rho-1 \ne 0 \mod 2a$;
\item \label{cong2} $2(\rho-1) \ne 0 \mod 2a$;
\item \label{cong3} $\rho+1 \ne 0 \mod 2a$;
\item \label{cong4} $2(\rho+1) \ne 0 \mod 2a$;
\item \label{cong5} $2\rho \ne 0 \mod 2a$;
\item \label{cong6} $4\rho \ne 0 \mod 2a$;
\item \label{cong7} $\rho-3 = 0 \mod 2a$ if and only if $a = 5$ and $\rho=3$;
\item \label{cong8} $3\rho-1 \ne 0 \mod 2a$;
\item \label{cong9} $\rho+3 \ne 0 \mod 2a$;
\item \label{cong10} $3\rho+1 = 0 \mod 2a$ if and only if $a = 5$ and $\rho=3$.
\end{enumerate}
\end{lemma}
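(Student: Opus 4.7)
The plan is to treat all ten items uniformly. For each congruence of the form $\alpha\rho + \beta \equiv 0 \pmod{2a}$ I will first pass to the weaker mod-$a$ statement $\alpha\rho \equiv -\beta \pmod a$. Since $a\in\A$ has only prime factors $\equiv 1 \pmod 4$, every prime divisor of $a$ is at least $5$, so each of the small coefficients $\alpha \in \{1,2,3,4\}$ appearing here is a unit modulo $a$. Inverting $\alpha$ gives $\rho \equiv -\beta\alpha^{-1} \pmod a$; squaring and inserting the defining relation $\rho^2 \equiv -1 \pmod a$ produces the unified divisibility
\[a \mid \alpha^2 + \beta^2,\]
which is the single arithmetic identity that will drive the whole proof.

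For items (1)--(6) the reduction forces either $\rho \equiv \pm 1 \pmod a$ (so $a \mid 2$) or $\rho \equiv 0 \pmod a$ (so $a \mid 1$); both contradict $a \ge 5$, so the mod-$a$ congruences already fail and a fortiori the mod-$2a$ ones do. For items (7)--(10) one instead obtains $a \mid 10$, and the constraints that $a$ is odd and lies in $\A$ leave only $a = 5$. Under the convention implicit in the construction of $\Gab$ that $\rho$ is the odd representative in $\{1,\ldots,a-1\}$ of a square root of $-1$ modulo $a$, the case $a = 5$ pins $\rho = 3$ uniquely (since $\rho^2 \equiv 4 \pmod 5$ gives $\rho \in \{2,3\}$). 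Substituting $(a,\rho) = (5,3)$ yields $\rho - 3 = 0$, $3\rho - 1 = 8$, $\rho + 3 = 6$, and $3\rho + 1 = 10$; only the first and fourth are divisible by $2a = 10$, establishing the iff statements in (7) and (10) and the non-vanishing claims in (8) and (9).

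The main conceptual point is spotting the unifying identity $a \mid \alpha^2 + \beta^2$; once that is in hand, the remaining work is elementary case-checking. The only subtlety worth flagging is the passage between mod-$a$ and mod-$2a$ congruences, but this is harmless here: the mod-$a$ reduction either already produces a contradiction in cases (1)--(6) or reduces the surviving case to a one-line substitution at $a = 5$ in cases (7)--(10), so no separate parity analysis is needed beyond fixing the standard representative of $\rho$.
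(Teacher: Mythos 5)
Your proof is correct and rests on the same underlying mechanism as the paper's: reduce each congruence mod $a$, square, and invoke $\rho^2 \equiv -1 \pmod a$, together with the convention that $\rho$ is the odd representative in $\{1,\dots,a-1\}$, which pins $\rho=3$ when $a=5$. The only difference is organizational --- you package all ten cases into the single divisibility $a \mid \alpha^2+\beta^2$ and then check the surviving case $(a,\rho)=(5,3)$ by direct substitution, whereas the paper argues each item separately (your substitution check is in fact slightly cleaner than the paper's parity argument in item (8)); this is a tidier presentation of essentially the same argument.
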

\begin{proof}
\begin{enumerate}[label={(\arabic*)}]
\item Suppose $\rho -1 = 0 \mod 2a$. Then $\rho = 1 \mod a$ and $\rho^2=1 \mod a$ which is a contradiction to the choice of $\rho$.
\item Suppose $2(\rho -1) = 0 \mod 2a$. Then $\rho -1 = 0 \mod a$ and $\rho = 1 \mod a$. The contradiction follows as before.
\item Suppose $\rho + 1 = 0 \mod 2a$. Then $\rho = -1 \mod a$ and $\rho^2 = 1 \mod a$ which is again a contradiction.
\item Suppose $2(\rho +1) = 0 \mod 2a$. Then $\rho +1 = 0 \mod a$ and $\rho = -1 \mod a$. This is a contradiction as before.
\item Suppose $2 \rho = 0 \mod 2a$. Then $\rho = 0 \mod a$ which contradicts the choice of $\rho$.
\item Suppose $4 \rho = 0 \mod 2a$. Then $2\rho = 0 \mod a$ and therefore $4 \rho^2 = 0 \mod a$. But $\rho$ was chosen so that $4 \rho^2 = -4 \mod a$. This is impossible, since $a$ is odd.
\item Suppose $\rho - 3 = 0 \mod 2a$. Then $\rho = 3 \mod a$ and $\rho^2 = 9 \mod a$. But $\rho^2 = -1 \mod a$ so $9 = -1 \mod a$. The only choice is $a = 5$, since $a$ is odd. The corresponding odd $\rho$ is $3$.
\item Suppose $3 \rho -1 = 0 \mod 2a$. Then $3\rho = 1 \mod a$ and $\rho = 3\rho^2 = -3 \mod a$. This yields $\rho^2 = 9 \mod a$ and as before $a=5$. Then $\rho = -3 = 2 \mod a$ which is not odd and therefore a contradiction.
\item Suppose $\rho + 3 = 0 \mod 2a$. Then $\rho = -3 \mod a$ and the contradiction follows as before.
\item Suppose $3 \rho + 1 = 0 \mod 2a$. Then $3 \rho = -1 \mod a$ and therefore $\rho = -3 \rho^2 = 3 \mod a$. As before we obtain $a = 5$ and $\rho =3$.
\end{enumerate}
\end{proof}

\noindent
These two technical lemmas together with multiple applications of the calculation rules for cosine allow us to compute $R_3$ explicitly and we obtain

\begin{prop}
There are eight linearly independent cubic equivariant maps for the $8$-dimensional representation of \Gab if $a=5$ and five linearly independent cubic equivariant polynomial maps for all other $a \in \A$:
\[R_3 = \begin{cases}
	8	&\quad \text{for} \quad a=5, \\
	5	&\quad \text{else}. \\
\end{cases} \]
\end{prop}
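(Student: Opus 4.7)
The plan is to apply formula \eqref{eqsat2} with $d = 3$, leveraging the character computation already carried out in Section \ref{ssma_es}. That analysis restricts the summation to elements $g = C^{k_1}D^{k_2}Q^{l_1}$ with $l_1 \in \{0, 2\}$; both $\chi(g)$ and $\chi_{(3)}(g)$ carry the common factor $(-1)^{l_1/2}$, so the two values of $l_1$ contribute equally and the $l_1$-sum produces a factor $2$. This leaves
\[ R_3 = \frac{1}{8ab} \sum_{k_1=0}^{a-1} \sum_{k_2=0}^{b-1} T(\eta, \rho\eta, \nu) \]
with $\eta = k_1\pi/a$, $\nu = k_2\pi/b$, and $T$ the $l_1$-independent trigonometric polynomial obtained from the remaining factors of $\chi_{(3)}\chi$.

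I would then expand $T$ into a real linear combination of pure cosines $\cos(p\eta + q\nu)$ by iterated use of the product-to-sum identity $2\cos\alpha\cos\beta = \cos(\alpha+\beta)+\cos(\alpha-\beta)$, producing integer parameters $p = p_0 + p_1\rho$ with $|p_0|,|p_1|\le 4$ and $|q|\le 4$. A parity check, using that $\rho$ is odd, shows that $p_0+p_1$ and $q$ are always even across every summand originating from any of the three pieces of $\chi_{(3)}$ multiplied by $(\cos\eta+\cos\rho\eta)\cos\nu$. This puts the double sum in exactly the form required by the first technical lemma of the appendix, so
\[ \sum_{k_1=0}^{a-1}\cos(pk_1\pi/a) = a\,\mathbbm{1}[2a\mid p], \qquad \sum_{k_2=0}^{b-1}\cos(qk_2\pi/b) = b\,\mathbbm{1}[2b\mid q]. \]

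Because $|q|\le 4$ and $b\ge 3$ is odd, the condition $2b\mid q$ forces $q=0$, killing a large portion of the expansion. Among the remaining terms the trivial choice $(p_0,p_1)=(0,0)$ always contributes; after the prefactor $1/(8ab)$ and the factor $ab$ from the vanishing-condition sums, I expect these baseline contributions to sum to the integer $5$, which is the verification step requiring one to carefully read off the coefficients from the expansion. For $(p_0,p_1)\ne(0,0)$ the condition $2a\mid p_0+p_1\rho$ is exactly what Lemma \ref{cong} is calibrated to test: parts (1)--(6), (8), (9) exclude every short combination of $1$ and $\rho$ uniformly in $a\in\A$, while parts (7) and (10) pin down the sporadic congruences $\rho\equiv 3\pmod{2a}$ and $3\rho\equiv -1\pmod{2a}$, each of which holds iff $(a,\rho)=(5,3)$. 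The surplus contributions from these sporadic slots account for the jump $5\to 8$ in the case $a=5$.

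The main obstacle is the bookkeeping: the product $\chi_{(3)}(g)\chi(g)$ has three pieces, each containing several factors of $c=\cos\eta+\cos\rho\eta$ and cosines of multiples of $\nu$, so full linearisation produces on the order of several dozen cosine terms whose coefficients must be collected on each $(p_0,p_1,q)$-slot. Once this table is in hand, matching to Lemma \ref{cong} and the final arithmetic step -- checking that the surviving weighted contributions collapse to the integers $5$ and $8$ -- are routine.
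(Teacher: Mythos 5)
Your proposal is correct and follows essentially the same route as the paper's appendix: Sattinger's formula restricted to the elements with nonzero character, product-to-sum linearisation into cosines $\cos\left((p_0+p_1\rho)\eta+q\nu\right)$, the cosine-sum lemma (applicable by your parity check) forcing $q=0$ and $2a \mid p_0+p_1\rho$, and Lemma \ref{cong} eliminating all nontrivial slots except the sporadic ones $(p_0,p_1)=\pm(1,3),\pm(3,-1)$ that occur only for $(a,\rho)=(5,3)$. The bookkeeping you defer does close: the $(0,0)$-slots yield $\tfrac18\left(\tfrac{128}{3}\cdot\tfrac94\cdot\tfrac38+32\cdot\tfrac12\cdot\tfrac14\right)=5$, the third piece of $\chi_{(3)}$ dies entirely under the $\nu$-average, and the four sporadic slots contribute the additional $3$ when $a=5$.
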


\clearpage

\subsection{Cubic equivariant maps}

\begin{table}[!ht]
\begin{center}
\begin{tabular}{ll}
	$ E_1(x) = \begin{pmatrix}
		\left(x_1^2 + x_2^2 \right) x_1 \\ 
		\left(x_1^2 + x_2^2 \right) x_2 \\
		\left(x_3^2 + x_4^2 \right) x_3 \\
		\left(x_3^2 + x_4^2 \right) x_4 \\
		\left(x_5^2 + x_6^2 \right) x_5 \\
		\left(x_5^2 + x_6^2 \right) x_6 \\
		\left(x_7^2 + x_8^2 \right) x_7 \\
		\left(x_7^2 + x_8^2 \right) x_8
	\end{pmatrix} $ & 
	$ E_5(x) = \begin{pmatrix}
	-x_3x_5x_7 - x_3x_6x_8 - x_4x_5x_8 + x_4x_6x_7 \\ 
	x_3x_5x_8 - x_3x_6x_7 - x_4x_5x_7 - x_4x_6x_8 \\
	-x_1x_5x_7 - x_1x_6x_8 + x_2x_5x_8 - x_2x_6x_7 \\
	-x_1x_5x_8 + x_1x_6x_7 - x_2x_5x_7 - x_2x_6x_8 \\
	x_1x_3x_7 + x_1x_4x_8 - x_2x_3x_8 + x_2x_4x_7 \\
	x_1x_3x_8 - x_1x_4x_7 + x_2x_3x_7 + x_2x_4x_8 \\
	x_1x_3x_5 - x_1x_4x_6 + x_2x_3x_6 + x_2x_4x_5 \\
	x_1x_3x_6 + x_1x_4x_5 - x_2x_3x_5 + x_2x_4x_6
	\end{pmatrix} $ \\
	$ E_2(x) = \begin{pmatrix}
	\left(x_3^2 + x_4^2 \right) x_1 \\ 
	\left(x_3^2 + x_4^2 \right) x_2 \\
	\left(x_1^2 + x_2^2 \right) x_3 \\
	\left(x_1^2 + x_2^2 \right) x_4 \\
	\left(x_7^2 + x_8^2 \right) x_5 \\
	\left(x_7^2 + x_8^2 \right) x_6 \\
	\left(x_5^2 + x_6^2 \right) x_7 \\
	\left(x_5^2 + x_6^2 \right) x_8
	\end{pmatrix} $ &
	$ E_6(x) = \begin{pmatrix}
	-x_1x_3x_6 + x_1x_4x_5 + x_2x_3x_5 + x_2x_4x_6 \\ 
	x_1x_3x_5 + x_1x_4x_6 + x_2x_3x_6 - x_2x_4x_5 \\
	-x_1x_3x_8 + x_1x_4x_7 + x_2x_3x_7 + x_2x_4x_8 \\
	x_1x_3x_7 + x_1x_4x_8 + x_2x_3x_8 - x_2x_4x_7 \\
	x_3x_5x_8 + x_3x_6x_7 + x_4x_5x_7 - x_4x_6x_8 \\
	x_3x_5x_7 - x_3x_6x_8 - x_4x_5x_8 - x_4x_6x_7 \\
	-x_1x_5x_8 - x_1x_6x_7 - x_2x_5x_7 + x_2x_6x_8 \\
	-x_1x_5x_7 + x_1x_6x_8 + x_2x_5x_8 + x_2x_6x_7
	\end{pmatrix} $ \\
	$ E_3(x) = \begin{pmatrix}
	\left(x_5^2 + x_6^2 \right) x_1 \\ 
	\left(x_5^2 + x_6^2 \right) x_2 \\
	\left(x_7^2 + x_8^2 \right) x_3 \\
	\left(x_7^2 + x_8^2 \right) x_4 \\
	\left(x_3^2 + x_4^2 \right) x_5 \\
	\left(x_3^2 + x_4^2 \right) x_6 \\
	\left(x_1^2 + x_2^2 \right) x_7 \\
	\left(x_1^2 + x_2^2 \right) x_8
	\end{pmatrix} $ &
	$ E_7(x) = \begin{pmatrix}
	-2x_5x_7x_8 - x_6x_7^2 + x_6x_8^2 \\ 
	-x_5x_7^2 + x_5x_8^2 + 2x_6x_7x_8 \\
	x_5^2x_8 + 2x_5x_6x_7 - x_6^2x_8 \\
	x_5^2x_7 - 2x_5x_6x_8 - x_6^2x_7 \\
	x_1^2x_4 + 2x_1x_2x_3 - x_2^2x_4 \\
	-x_1^2x_3 + 2x_1x_2x_4 + x_2^2x_3 \\
	2x_1x_3x_4 + x_2x_3^2 - x_2x_4^2 \\
	-x_1x_3^2 + x_1x_4^2 + 2x_2x_3x_4
	\end{pmatrix} $ \\
	$ E_4(x) = \begin{pmatrix}
	\left(x_7^2 + x_8^2 \right) x_1 \\ 
	\left(x_7^2 + x_8^2 \right) x_2 \\
	\left(x_5^2 + x_6^2 \right) x_3 \\
	\left(x_5^2 + x_6^2 \right) x_4 \\
	\left(x_1^2 + x_2^2 \right) x_5 \\
	\left(x_1^2 + x_2^2 \right) x_6 \\
	\left(x_3^2 + x_4^2 \right) x_7 \\
	\left(x_3^2 + x_4^2 \right) x_8
	\end{pmatrix} $ &
	$ E_8(x) = \begin{pmatrix}
	x_3^2x_8 - 2x_3x_4x_7 - x_4^2x_8 \\ 
	-x_3^2x_7 -2x_3x_4x_8 + x_4^2x_7 \\
	x_1^2x_6 - 2x_1x_2x_5 - x_2^2x_6 \\
	-x_1^2x_5 - 2x_1x_2x_6 + x_2^2x_5 \\
	2x_1x_7x_8 + x_2x_7^2 - x_2x_8^2 \\
	x_1x_7^2 - x_1x_8^2 - 2x_2x_7x_8 \\
	-2x_3x_5x_6 - x_4x_5^2 + x_4x_6^2 \\
	-x_3x_5^2 + x_3x_6^2 + 2x_4x_5x_6
	\end{pmatrix} $
\end{tabular}
\end{center}
\captionsetup{width=.95\textwidth}
\caption{Cubic quivariant maps $E_1, \ldots, E_8$ for $G_{5,3}$.}
\label{appce}
\end{table}
\clearpage

\subsection{Phase vector fields}

\begin{table}[!ht]
\begin{center}
\begin{tabular}{l}
	$ \pvf{E_1} (y)	= \begin{pmatrix} -y_{1}\, \left( {y_{1}}^{4}+{y_{2}}^{4}+{y_{
				3}}^{4}+{y_{4}}^{4}-{y_{1}}^{2} \right) \\ \noalign{\medskip}-y_{2}\, \left( {y_{1}}^{4}+{y_{2}}^{4}+{y_{3}}^{
			4}+{y_{4}}^{4}-{y_{2}}^{2} \right) \\ \noalign{\medskip}-y_{3}\, \left( {y_{1}}^{4}
		+{y_{2}}^{4}+{y_{3}}^{4}+{y_{4}}^{4}-{y_{3}}^{2} \right) 
		\\ \noalign{\medskip}-y_{4}\, \left( {y_{1}}^{4}
		+{y_{2}}^{4}+{y_{3}}^{4}+{y_{4}}^{4}-{y_{4}}^{2} \right) \end{pmatrix} $ \\
		$ \pvf{E_2} (y)	= \begin{pmatrix} -y_{1}\, \left( 2\,{y_{1}}^{2}{y_{2}}^{2}+2
		\,{y_{3}}^{2}{y_{4}}^{2}-{y_{2}}^{2} \right)
		\\ \noalign{\medskip}-y_{2}\, \left( 2\,{y_{1}}^{2}{y_{2}}^{2}+2\,{y_{
				3}}^{2}{y_{4}}^{2}-{y_{1}}^{2} \right) \\ \noalign{\medskip}-y_{3}\, \left( 2\,{y_{1}}^
		{2}{y_{2}}^{2}+2\,{y_{3}}^{2}{y_{4}}^{2}-{y_{4}}^{2} \right) 
		\\ \noalign{\medskip}-y_{4}\, \left( 2\,{y_{1}}^
		{2}{y_{2}}^{2}+2\,{y_{3}}^{2}{y_{4}}^{2}-{y_{3}}^{2} \right) 
		\end{pmatrix} $ \\
		$ \pvf{E_3} (y)	= \begin{pmatrix} -y_{1}\, \left( {y_{1}}^{2}{y_{3}}^{2}+{y_{4
			}}^{2}{y_{1}}^{2}+{y_{3}}^{2}{y_{2}}^{2}+{y_{2}}^{2}{y_{4}}^{2}-{y_{3}
		}^{2} \right) \\ \noalign{\medskip}-y_{2}\,
		\left( {y_{1}}^{2}{y_{3}}^{2}+{y_{4}}^{2}{y_{1}}^{2}+{y_{3}}^{2}{y_{2
			}}^{2}+{y_{2}}^{2}{y_{4}}^{2}-{y_{4}}^{2} \right) 
			\\ \noalign{\medskip}-y_{3
			}\, \left( {y_{1}}^{2}{y_{3}}^{2}+{y_{4}}^{2}{y_{1}}^{2}+{y_{3}}^{2}{y
			_{2}}^{2}+{y_{2}}^{2}{y_{4}}^{2}-{y_{2}}^{2} \right) 
		\\ \noalign{\medskip}-y_{4}\, \left( {y_{1}}^{2}
		{y_{3}}^{2}+{y_{4}}^{2}{y_{1}}^{2}+{y_{3}}^{2}{y_{2}}^{2}+{y_{2}}^{2}{
			y_{4}}^{2}-{y_{1}}^{2} \right) \end{pmatrix} $ \\
		$ \pvf{E_4} (y)	= \begin{pmatrix} -y_{1}\, \left( {y_{1}}^{2}{y_{3}}^{2}+{y_{1
			}}^{2}{y_{4}}^{2}+{y_{3}}^{2}{y_{2}}^{2}+{y_{2}}^{2}{y_{4}}^{2}-{y_{4}
		}^{2} \right) \\ \noalign{\medskip}-y_{2}\,
		\left( {y_{1}}^{2}{y_{3}}^{2}+{y_{1}}^{2}{y_{4}}^{2}+{y_{3}}^{2}{y_{2
			}}^{2}+{y_{2}}^{2}{y_{4}}^{2}-{y_{3}}^{2} \right) 
			\\ \noalign{\medskip}-y_{3
			}\, \left( {y_{1}}^{2}{y_{3}}^{2}+{y_{1}}^{2}{y_{4}}^{2}+{y_{3}}^{2}{y
			_{2}}^{2}+{y_{2}}^{2}{y_{4}}^{2}-{y_{1}}^{2} \right) 
		\\ \noalign{\medskip}-y_{4}\, \left( {y_{1}}^{2}
		{y_{3}}^{2}+{y_{1}}^{2}{y_{4}}^{2}+{y_{3}}^{2}{y_{2}}^{2}+{y_{2}}^{2}{
			y_{4}}^{2}-{y_{2}}^{2} \right) \end{pmatrix} $ \\
		$ \pvf{E_5} (y)	= \begin{pmatrix} -y_{2}\,y_{3}\,y_{4}
		\\ \noalign{\medskip}-y_{1}\,y_{3}\,y_{4}\\ \noalign{\medskip}y_{1}\,y_{2}\,y_{4}
		\\ \noalign{\medskip}y_{1}\,y_{2}\,y_{3}
		\end{pmatrix} $
\end{tabular}
\end{center}
\captionsetup{width=.95\textwidth}
\caption{Phase vector fields of $E_1, \ldots, E_5$ restricted to $S^3 \subset \fix{K}$.}
\label{apppfv}
\end{table}

\end{appendix}		

\newpage
\begingroup
\RaggedRight
\printbibliography
\endgroup
\phantomsection
\addcontentsline{toc}{section}{References} 
\end{document}